\newcommand{\B}[1]{\boldsymbol{#1}}
\begin{document}

\title{Spectral Schur analysis of structured moment matrices arising from enriched quadratic histopolation on simplicial meshes
}
% \subtitle{Do you have a subtitle?\\ If so, write it here}

\titlerunning{Spectral Schur analysis of structured moment matrices for quadratic histopolation}        % if too long for running head

\author{Allal Guessab \and 
        Federico Nudo  
}

%\authorrunning{Short form of author list} % if too long for running head

\institute{Allal Guessab \at
             Avenue Al Walae, 73000, Dakhla, Morocco\\ 
             \email{guessaballal7@gmail.com} 
         \and 
            Federico Nudo (corresponding author) \at
              Department of Mathematics and Computer Science, University of Calabria, Rende (CS), Italy\\
              \email{federico.nudo@unical.it}
}

\date{Received: date / Accepted: date}

\maketitle

\begin{abstract}
In this paper we study parameter-dependent structured moment matrices with a canonical block form arising from weighted quadratic histopolation on simplicial meshes. 
For a strictly positive density on a simplex, we construct  compatible face densities and an orthogonal decomposition of the quadratic polynomial space into face and interior components, which induces a natural face-interior block structure. 
A reduced Schur complement is identified that fully characterizes enrichment and well-posedness and provides a sharp spectral stability result. 
We show that this quantity coincides with the square root of the smallest eigenvalue of a low-dimensional symmetric positive definite operator. 
This matrix-based viewpoint yields simple spectral criteria for the invertibility of local moment systems and motivates spectrally preferable choices of face and interior bases with improved conditioning. 
Using the resulting degrees of freedom together with density and scaling parameters as design variables, we formulate a small eigenvalue optimization problem aimed at improving stability and reducing the condition number of the global reconstruction system. 
Three-dimensional experiments on uniform and quasi-uniform simplicial meshes illustrate the predicted stability, conditioning, and convergence behaviour of the enriched quadratic reconstruction.

\keywords{Structured moment matrices\and Schur complement \and Spectral stability\and Conditioning \and Orthogonal decomposition}
\subclass{65F08, 65F10, 15A18, 15A42}
\end{abstract}

\section{Introduction}
Block-structured symmetric positive definite systems and their reduced operators are a recurring
theme in applied numerical linear algebra~\cite{van2005orthogonal,benzi2005numerical,mastronardi2006computing}. In many discretization methods, the algebraic structure
is generated by a small number of elementary local building blocks, such as mass, stiffness, or
moment matrices, assembled over a mesh. Understanding how these blocks interact, how they can
be factorized, and how their spectra control stability and conditioning has long been a central
topic in numerical linear algebra, with direct impact on the robustness and efficiency of large-scale
solvers and preconditioners~\cite{capizzano2001spectral,calvetti2002restarted,jagels2013structure,capizzano2016spectral,capizzano2021matrix,noschese2022estimating,el2023spectral,bruni2025numerical,stanic2025analysis}. 
Related projection-based approaches and Krylov subspace techniques for large-scale structured problems
have been extensively investigated in the literature; see, e.g.,~\cite{bentbib2018computational}.
A particularly relevant instance arises in discretization methods based on integral data, where the
unknown function is reconstructed not from pointwise values, but from averages or weighted
integrals over subsets of the domain. Such formulations naturally lead to families of low-dimensional
structured matrices at the element level and appear in several applications, including inverse
problems and imaging, medical and seismic tomography, and transport and flow models
\cite{kak2001principles,natterer2001mathematics,palamodov2016reconstruction}.
This paper develops a spectral Schur analysis for a family of \emph{structured moment matrices}.
These matrices exhibit a canonical face-interior organization. We show that enrichment, unisolvence, and stability
can be characterized in a dimension-independent way by the invertibility and spectral properties
of a reduced symmetric positive definite operator of moderate size. In particular, a functional
stability quantity admits an exact matrix interpretation: it coincides with the square root of the
smallest eigenvalue of a suitably normalized reduced operator. This spectral viewpoint provides
a concrete and computable robustness measure and turns the choice of local spaces, weights,
and scalings into a low-dimensional eigenvalue design problem. 
While histopolation motivates the construction, the analysis is matrix-based and applies more broadly to block-structured positive semidefinite moment systems induced by integral (or moment) constraints on simplicial meshes. In this sense, the results connect directly to Schur-complement techniques and spectral design/preconditioning principles in numerical linear algebra. The motivating application for this analysis is \emph{local weighted enriched quadratic histopolation on simplicial meshes}. Given a finite collection of linear functionals defined by integrals, the goal of histopolation
is to construct an approximant whose associated averages reproduce the prescribed data. Over the
last decades, this principle has led to a variety of developments, ranging from shape-preserving
spline schemes to global polynomial constructions
\cite{Fischer:2005:MPR,Fischer:2007:CSP,Siewer:2008:HIS,bruni2024polynomial,bruni2025polynomial,bruno2025bivariate}.
In the local formulation, the domain is partitioned into elements and, on each simplex, a local
histopolant is computed from the prescribed integral data. A global approximation is then obtained
by assembling these local contributions \cite{Guessab:2022:SAB}. This construction leads to small
structured linear systems at the element level, whose coefficients encode the interaction between
the density, the polynomial reconstruction space, and the imposed moment constraints. From a
numerical linear algebra perspective, the robustness of these systems is determined by the spectral
properties of reduced operators obtained by Schur-type reductions. A related three-dimensional
quadratic weighted histopolation scheme on tetrahedral meshes, based on different reconstruction
functionals, is studied in~\cite{guessab2025quadratic}.  In most local histopolation schemes, the reconstruction space is chosen as a finite-dimensional
polynomial space, which allows the associated moment matrices to be expressed in closed form.
While purely linear reconstruction often yields limited accuracy due to the low order of the trial
functions, quadratic enrichment increases approximation power while preserving an algebraic
structure that remains suitable for detailed spectral analysis. This balance between approximation
capability, conditioning control, and compatibility with general simplicial meshes makes quadratic
histopolation particularly well suited to modern reconstruction tasks involving integral or
moment-based data. Despite their apparent simplicity, the corresponding moment matrices exhibit
a highly structured internal organization, and understanding this structure is essential for deriving
robust and dimension-independent stability results.

Our contributions can be summarized as follows.
\begin{itemize}
\item \textbf{Canonical face/interior decomposition and intrinsic block structure.}
  For any strictly positive density on a simplex, we construct compatible face densities and derive
  an orthogonal decomposition of the quadratic enrichment space into face and interior parts.
  This makes the underlying block organization of the associated moment matrices explicit and
  isolates the reduced operator that controls enrichment.

\item \textbf{Dimension-independent unisolvence criterion via Schur reduction.}
We establish two equivalent linear algebraic characterizations of the unisolvence of the
enriched quadratic reconstruction from prescribed moments.
In a reduced formulation, unisolvence holds if and only if an affine-level matrix and a
Schur-type operator associated with the face/interior coupling are nonsingular.
Equivalently, unisolvence holds if and only if the affine and quadratic moment matrices are
nonsingular.
This yields a sharp characterization of unisolvence that depends only on local moment
information and is independent of the spatial dimension.

\item \textbf{Sharp spectral characterization of stability.}
  We derive an operator formulation of the inf--sup constant of the enriched reconstruction and
  show that, after Schur reduction and normalization, it is exactly the square root of the smallest
  eigenvalue of a reduced symmetric positive definite operator. This provides a clean spectral
  stability metric to assess the effect of the density and of local choices.

\item \textbf{Spectrally preferable bases and conditioning control.}
  Building on the reduced spectral description, we construct explicit families of face and interior
  quadratic basis functions that minimize inter-block coupling and improve conditioning. The
  resulting design principles are purely finite-dimensional (Gram structures, Rayleigh quotients,
  and standard inequalities) and fit naturally into the numerical linear algebra perspective.

\item \textbf{Affine-invariant robustness and spectral optimization.}
  By transporting densities and test functions via barycentric coordinates under affine element maps,
  we establish invariance of the relevant reduced operators across affine-equivalent simplices,
  explaining the observed stability on uniform and quasi-uniform meshes. We finally formulate
  a small eigenvalue optimization problem over density and scaling parameters, interpretable as
  an automatic spectral tuning (or preconditioning) of the moment system.
\end{itemize}

The paper is organized as follows.
In Section~\ref{sec2} we introduce the enriched weighted quadratic histopolation problem and derive
the structured block form of the associated moment matrices. Within this framework, we establish two equivalent unisolvence criteria, identify a Schur-type reduced operator that characterizes unisolvence
and stability, and prove that the resulting strategy is robust under affine transformations of the
elements, yielding uniform bounds on shape-regular meshes. In Section~\ref{sec3} we develop a
canonical orthogonal decomposition of the quadratic polynomial space into face and interior
components, leading to optimal choices of quadratic basis functions and clarifying their spectral
role within the moment matrix structure. In Section~\ref{sec4} we provide explicit realizations of the
framework for Dirichlet and affine weights. In Section~\ref{sec5} we introduce parametric scalings
of the moment functionals and of the underlying densities and formulate spectral optimization
problems aimed at improving stability or reducing the condition number of the reconstruction
system. Finally, in Section~\ref{sec6} we present three-dimensional numerical experiments that
validate the theoretical predictions. These experiments confirm that the $L^2$-error of the global
quadratic histopolation operator decays at the expected rate $O\left(h^3\right)$, in line with standard
quadratic finite elements~\cite{Ciarlet2002TFE}, and illustrate the predicted spectral behaviour,
stability, and conditioning of the proposed scheme.

\section{General strategy for weighted quadratic histopolation}
\label{sec2}
Let $S_d \subset \mathbb{R}^d$ be a nondegenerate $d$-simplex with vertices 
$\B v_0,\dots,\B v_d$, and let $\lambda_0,\dots,\lambda_d$ denote its barycentric coordinates. 
For any $k \in \mathbb{N}$, we define
\begin{equation*}
\mathbb{P}_k\left(S_d\right)=\operatorname{span}\left\{\lambda_0^{i_0}\cdots\lambda_d^{i_d}:\ i_0+\cdots+i_d\le k\right\},    
\end{equation*}
the space of all polynomials on $S_d$ of total degree at most $k$.  
It is well known that
\begin{equation*}
    \dim \left(\mathbb{P}_k\left(S_d\right)\right)=\binom{d+k}{k}.
\end{equation*}
The barycentric coordinates $\lambda_i \in \mathbb{P}_1\left(S_d\right)$, $i=0,\dots,d,$ satisfy 
\begin{equation*}
    \sum_{i=0}^d \lambda_i = 1, 
    \quad 
    \lambda_i\left(\B v_j\right) = \delta_{ij},
\end{equation*}
where $\delta_{ij}$ denotes the Kronecker delta. These functions satisfy the following classical identity.
\begin{lemma}\label{lem1}
  Let $\alpha_0,\ldots,\alpha_d$ be positive real numbers. Then
  \begin{equation}\label{idbc}
     \frac{1}{\left|S_d\right|}\int_{S_d}\prod_{i=0}^{d}\lambda_i^{\alpha_i}(\B x)d\B x
     =
     \frac{d!\prod_{i=0}^{d}\Gamma(\alpha_i+1)}{\Gamma\left(d+1+\sum_{i=0}^{d}\alpha_i\right)},
  \end{equation}
  where $\left|S_d\right|$ denotes the volume of $S_d$ and $\Gamma(\cdot)$ is the Gamma function~\cite{Abramowitz:1948:HOM}.
\end{lemma}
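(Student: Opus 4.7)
The plan is to reduce the identity to the classical Dirichlet integral on the standard reference simplex and then close the computation by induction together with the Beta function. First I would exploit affine invariance: there is a unique affine bijection from the standard simplex $\Delta_d=\{(x_1,\dots,x_d)\in\mathbb{R}^d:\ x_i\ge 0,\ \sum_{i=1}^d x_i\le 1\}$ onto $S_d$ sending vertices to vertices, and barycentric coordinates are preserved under this map with the identification $\lambda_0=1-\sum_{i=1}^d x_i$ and $\lambda_i=x_i$ for $i=1,\dots,d$. The Jacobian of this map is a constant factor that cancels between $|S_d|$ and the numerator, so the normalized integral on the left-hand side of \eqref{idbc} is affine-invariant. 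Since $|\Delta_d|=1/d!$, it suffices to establish
\begin{equation*}
\int_{\Delta_d} x_1^{\alpha_1}\cdots x_d^{\alpha_d}\Bigl(1-\sum_{i=1}^d x_i\Bigr)^{\alpha_0} dx
\;=\;\frac{\prod_{i=0}^{d}\Gamma(\alpha_i+1)}{\Gamma\bigl(d+1+\sum_{i=0}^{d}\alpha_i\bigr)}.
\end{equation*}

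Next I would proceed by induction on $d$. The base case $d=1$ is exactly the Beta integral $\int_0^1 t^{\alpha_1}(1-t)^{\alpha_0}\,dt = \Gamma(\alpha_0+1)\Gamma(\alpha_1+1)/\Gamma(\alpha_0+\alpha_1+2)$. For the inductive step, fix $x_d=t\in[0,1]$ and, by Fubini, write the remaining $(d-1)$-fold integral over the scaled simplex $\{(x_1,\dots,x_{d-1}):\ x_i\ge 0,\ \sum_{i=1}^{d-1}x_i\le 1-t\}$. The rescaling $x_i=(1-t)y_i$ transforms this into $(1-t)^{d-1}$ times a Dirichlet integral on $\Delta_{d-1}$ with exponents $\alpha_1,\dots,\alpha_{d-1},\alpha_0$, and it produces an additional factor $(1-t)^{\alpha_0+\alpha_1+\cdots+\alpha_{d-1}}$ from the explicit powers. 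Collecting the powers of $(1-t)$ and $t$, the inductive hypothesis reduces everything to a single one-dimensional Beta integral in $t$, whose evaluation combines with the inductive Gamma quotient via the telescoping relation $\Gamma(a)\Gamma(b)/\Gamma(a+b)$.

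Multiplying through by $d!=1/|\Delta_d|$ then recovers \eqref{idbc}. The computation is entirely classical and the only real obstacle is bookkeeping: one must track how the dimensional factor $(1-t)^{d-1}$, the exponent sum $\sum_{i=0}^{d-1}\alpha_i$, and the $\Gamma\bigl(d+\sum_{i=0}^{d-1}\alpha_i\bigr)$ arising from the inductive hypothesis conspire to yield precisely $\Gamma(d+1+\sum_{i=0}^d\alpha_i)$ in the denominator after the final Beta evaluation. Alternatively, one could avoid the induction entirely by appealing to the Laplace transform identity $\int_{\mathbb{R}_+^{d+1}} \prod_i u_i^{\alpha_i} e^{-\sum_i u_i} du = \prod_i \Gamma(\alpha_i+1)$ and the change of variables $u_i = s \lambda_i$ with $s=\sum u_i$, which factorizes the Gamma product and isolates the desired simplex integral — useful if a more structural derivation is preferred for later reuse.
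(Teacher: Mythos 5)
Your argument is correct and complete in outline: the affine reduction to the standard simplex (with the Jacobian cancelling against $\left|S_d\right|$ and $\left|\Delta_d\right|=1/d!$), followed by either the induction-with-Beta-integral computation or the Gamma-factorization via the substitution $u_i=s\lambda_i$, is the standard derivation of the Dirichlet integral, and the exponent bookkeeping you describe does telescope to $\Gamma\left(d+1+\sum_{i=0}^d\alpha_i\right)$ as claimed. Note, however, that the paper itself gives no proof of this lemma at all; it is stated as a classical identity with a citation to the literature, so your write-up supplies a justification the authors deliberately left to a reference rather than diverging from, or duplicating, an argument in the paper.
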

For each $j\in\{0,\dots,d\}$, we denote by $F_j$ the face of $S_d$ opposite to the vertex $\B v_j$, and by 
$\left\{\mu_{j,i}\right\}_{i=0, i\neq j}^d$ its barycentric coordinates.

Throughout the paper, we assume that 
$\Omega \in L^1\left(S_d\right)$ is a probability density that is strictly positive almost everywhere in the 
\emph{interior} of $S_d$. 
For each face $F_j$, we define the induced face density $\omega_j$ 
as the normalized conditional trace of $\Omega$ on $F_j$.  
More precisely, for any $\phi \in C^{0}\left(F_j\right)$ we set
\begin{equation*}
    \int_{F_{j}} \phi(\B x)\omega_{j}(\B x)d\B x
=
\lim_{\varepsilon\to 0^{+}}
\frac{
\displaystyle \int_{\left\{\B \sigma\in S_{d}\, :\,\lambda_{j}(\B \sigma)<\varepsilon\right\}} 
\phi\left(\B \sigma\right)\Omega(\B \sigma)d\B \sigma
}{
\displaystyle \int_{\left\{\B \sigma\in S_{d}\,:\,\lambda_{j}(\B \sigma)<\varepsilon\right\}} \Omega(\B \sigma)d\B \sigma
}.
\end{equation*}
We assume that this limit exists and defines a probability density 
$\omega_j \in L^1\left(F_j\right)$ which is positive almost everywhere on $F_j$.
We consider the weighted inner products
\begin{eqnarray*}
    \left\langle f, g \right\rangle_\Omega
    &=& \int_{S_d} f\left(\B \sigma\right) g\left(\B \sigma\right) 
      \Omega\left(\B \sigma\right) d\B \sigma, \quad f,g\in L^2_{\Omega}\left(S_d\right),\\
       \left\langle f,g \right\rangle_{\omega_j}
    &=& \int_{F_j} f\left(\B x\right) g\left(\B x\right) 
      \omega_j\left(\B x\right) d\B x, \quad f,g\in L^2_{\omega_j}\left(F_j\right).
\end{eqnarray*}

Before introducing the full unisolvence framework, we provide a set of explicit 
examples illustrating the meaning of the induced face densities $\omega_j$.
\begin{example}
We consider the following weight functions:
\begin{itemize}
  \item \textbf{Constant weight.}\\
  Let 
  \begin{equation*}
  \Omega(\B \sigma)=\frac{1}{\left|S_d\right|}, \quad \B \sigma\in S_d,  
  \end{equation*}
    be the uniform probability density on the simplex $S_d\subset\mathbb{R}^d$.
  Then, by definition, the induced face density is uniform on each face $F_j$, that is
  \begin{equation*}
    \omega_j(\B x) = \frac{1}{|F_j|}, \quad \B x \in F_j.
  \end{equation*}
  \item \textbf{Affine weight.}\\
 Let
  \begin{equation*}
    \Omega(\B \sigma) = \sum_{i=0}^{d} \alpha_i\lambda_i(\B \sigma), \quad \alpha_i > 0, \quad \B \sigma\in S_d,
  \end{equation*}
be a positive affine weight on $S_d$.
  The induced face density on $F_j$ is 
  \begin{equation*}
    \omega_j(\B x)
    =
    \frac{\displaystyle \sum_{\substack{i=0 \\ i \neq j}}^{d} \alpha_i \mu_{j,i}(\B x)}
         {\displaystyle \int_{F_j} \sum_{\substack{i=0 \\ i \neq j}}^{d} \alpha_i \mu_{j,i}(\B x)d\B x},
    \quad \B x \in F_j.
  \end{equation*}
  \item \textbf{Dirichlet weight.}\\
  Let 
  \begin{equation*}
    \Omega(\B \sigma) = \prod_{i=0}^{d} \lambda_i^{\alpha_i - 1}(\B \sigma), \quad \alpha_i > 0, \quad \B \sigma\in S_d,
  \end{equation*}
be a Dirichlet type weight. Then, on the face $F_j$, the induced face density is
  again of Dirichlet type in dimension $d-1$ with the proportionality constant chosen to ensure normalization on $F_j$.
  Thus, Dirichlet weights are closed under face restriction.
\end{itemize}
\end{example}
We introduce
\begin{equation*}
\mathbb{S}_2\left(S_d\right)=\left\{p\in \mathbb{P}_2\left(S_d\right) \, : \, \langle p,\varphi\rangle_\Omega=0,\ \forall\varphi\in \mathbb{P}_1\left(S_d\right)\right\},
\end{equation*}
the orthogonal complement of $\mathbb{P}_1\left(S_d\right)$ in $\mathbb{P}_2\left(S_d\right)$ 
with respect to the weighted inner product $\left\langle \cdot, \cdot \right\rangle_\Omega$. Accordingly,
\begin{equation}\label{P2}
\mathbb{P}_2\left(S_d\right)=\mathbb{P}_1\left(S_d\right)\oplus \mathbb{S}_2\left(S_d\right),
\end{equation}
and, with a slight abuse of notation, we sometimes write
\begin{equation*}
\mathbb{S}_2\left(S_d\right)=\mathbb{P}_2\left(S_d\right)\ominus\mathbb{P}_1\left(S_d\right).
\end{equation*}
Hence,
\begin{equation}\label{dimS2}
\dim\left(\mathbb{S}_2\left(S_d\right)\right)=\dim\left(\mathbb{P}_2\left(S_d\right)\right)-\dim\left(\mathbb{P}_1\left(S_d\right)\right)= \frac{(d+2)(d+1)}{2}-(d+1)=\frac{d(d+1)}{2}.
\end{equation}
We then select
\begin{equation*}
    \tilde{d}=\frac{(d-2)(d+1)}{2}
\end{equation*}
linearly independent polynomials  $\left\{\rho_k\right\}_{k=1}^{\tilde{d}}$ in $\mathbb{S}_2\left(S_d\right)$, so that the remaining 
$d+1$ basis functions will be associated with the faces. We set 
\begin{equation}\label{spaceV}
\mathbb{V}=\operatorname{span}\left\{\rho_k\, :\, k=1,\dots,\tilde{d} \right\}
\end{equation}
and define
\begin{equation*}
    \mathbb{W}=\operatorname{span}\left\{\psi_j \, :\, j=0,\dots,d\right\}, 
\end{equation*}
so that the decomposition
\begin{equation}\label{S2}
\mathbb{S}_2\left(S_d\right)=\mathbb{V}\oplus\mathbb{W}
\end{equation}
holds.

We denote by 
\begin{equation}\label{projOmega}
\Pi_{1,\Omega}:L^2_{\Omega}\left(S_d\right)\to \mathbb{P}_1\left(S_d\right)    
\end{equation}
and 
\begin{equation}\label{projomegaj}
\Pi_{1,\omega_j}:L^2_{\omega_j}\left(F_j\right)\to\mathbb{P}_1\left(F_j\right)
\end{equation}
the weighted $L^{2}$-orthogonal
projections onto $\mathbb{P}_{1}\left(S_{d}\right)$ and $\mathbb{P}_{1}\left(F_{j}\right)$  with respect to the weights $\Omega$ and $\omega_j$, respectively. We then introduce the following linear functionals on $C^0\left(S_d\right)$:
\begin{itemize}
    \item \textit{Face averages}
    \begin{equation}\label{Ij}
        \mathcal{I}_j: f\mapsto \left\langle f,1 \right\rangle_{\omega_j}=\int_{F_j} f_{_{\mkern 1mu \vrule height 2ex\mkern2mu F_j}}(\B x) \omega_j(\B x) d\B x, \quad j=0,\dots,d;
    \end{equation}
    \item \textit{Face quadratic moments}
 \begin{equation}\label{Lj}
     \mathcal{L}_j:f \mapsto\left\langle f,q_j \right\rangle_{\omega_j}= \int_{F_j}    f_{_{\mkern 1mu \vrule height 2ex\mkern2mu F_j}}(\B x)q_j(\B x) \omega_j(\B x) d\B x, \quad j=0,\dots,d,
 \end{equation}
  where each $q_j \in \mathbb{P}_2\left(F_j\right)$ is chosen 
  such that
  \begin{equation}\label{propqj}
     \left\langle q_j, \varphi \right\rangle_{\omega_j}=0, \quad \forall\varphi\in\mathbb{P}_1\left(F_j\right); 
  \end{equation}
  \item  \textit{Interior linear functionals}  
\begin{equation}\label{Vk}    \mathcal{V}_k:f\mapsto\left\langle f,\rho_k \right\rangle_{\Omega}= \int_{S_d}  f(\B \sigma) \rho_k(\B \sigma) \Omega(\B \sigma) d\B \sigma, \quad k=1,\dots,\tilde{d},
\end{equation}
where $\rho_k$ is defined in~\eqref{spaceV}.
\end{itemize}
The total number of linear functionals is therefore
\begin{equation*}
   (d+1) + (d+1)+\tilde{d}= 2 (d+1)+ \frac{(d+1)(d-2)}{2} = \frac{(d+1)(d+2)}{2} = \dim\left(\mathbb{P}_2\left(S_d\right)\right),
\end{equation*}
which ensures the dimensional consistency of the construction.

\begin{remark}\label{dsdsdsdsdss}
Since the restriction of any $\varphi\in\mathbb{P}_1\left(S_d\right)$ to each face $F_j$
is a linear polynomial on $F_j$, by~\eqref{propqj}, we have
\begin{equation*}
\mathcal L_j(\varphi)=0, \quad j=0,\dots,d.
\end{equation*}
Moreover, in view of~\eqref{spaceV} and by the definition of $\rho_k$, we also obtain
\begin{equation*}
\mathcal V_k(\varphi)=0, \quad k=1,\dots,\tilde{d},    
\end{equation*}
 for any 
$\varphi\in\mathbb{P}_1\left(S_d\right)$. 
\end{remark}

Using the decomposition~\eqref{S2}, any 
$p \in \mathbb{S}_2\left(S_d\right)$ can be written as
\begin{equation*}
    p=v+w, 
\end{equation*}
where
\begin{equation*}
    v=\sum_{\ell=1}^{\tilde{d}} \xi_{\ell} \rho_{\ell}\in \mathbb{V}, \quad w=\sum_{i=0}^{d} \gamma_{i} \psi_{i}\in \mathbb{W}.
\end{equation*}
The vanishing constraints
\begin{equation*}
\mathcal{V}_k(p)=\mathcal{L}_j(p)=0, \quad k=1,\dots,\tilde{d}, \quad j=0,\dots,d,    
\end{equation*}
yield the following block linear system
\begin{equation*}
   H
\begin{pmatrix}
\B \xi\\
\B \gamma
\end{pmatrix}
=\B 0,
\end{equation*}
where $\B \xi=\left(\xi_1,\dots,\xi_{\tilde{d}}\right)^{\top}$, $\B \gamma=\left(\gamma_0,\dots,\gamma_{d}\right)^{\top}$ and
\begin{equation}\label{matH}
H=
\begin{pmatrix}
G & C\\
\tilde C & M
\end{pmatrix}.
\end{equation}
Here the blocks are given by
\begin{eqnarray}
\label{matG} G\in\mathbb{R}^{\tilde{d}\times\tilde{d}} &\text{ with entries }& \left[G\right]_{k\ell}=\mathcal{V}_k\left(\rho_\ell\right)=\left\langle\rho_\ell, \rho_k\right\rangle_{\Omega}, \\ 
\label{matC} C\in\mathbb{R}^{\tilde{d}\times (d+1)} &\text{ with entries }& \left[C\right]_{ki}=\mathcal{V}_k\left(\psi_i\right)=\left\langle\psi_{i},\rho_k\right\rangle_{\Omega}, \\ 
\label{mattildeC} \tilde{C}\in\mathbb{R}^{(d+1)\times \tilde{d}} &\text{ with entries  }& \left[\tilde{C}\right]_{j\ell}=\mathcal{L}_j\left(\rho_\ell\right)=\left\langle\rho_\ell,q_j\right\rangle_{\omega_j}, \\ 
\label{matM} M\in\mathbb{R}^{(d+1)\times (d+1)} &\text{ with entries  }& \left[M\right]_{j i}=\mathcal{L}_j\left(\psi_{i}\right)=\left\langle\psi_{i},q_j\right\rangle_{\omega_j}.
\end{eqnarray}
Since $\Omega$ is strictly positive almost everywhere in the interior of $S_{d}$ and 
$\rho_{1},\dots,\rho_{\tilde d}$ are linearly independent,
the matrix $G$ is a Gram matrix and therefore symmetric positive definite.
In particular,
\begin{equation}\label{Gmat}
    \det(G) \neq 0.
\end{equation}

Finally, we denote by
\begin{equation}\label{matA}
    A\in\mathbb{R}^{(d+1)\times(d+1)} \text{ with entries } [A]_{ji}=\mathcal{I}_j\left(\lambda_{i}\right)=\left\langle \lambda_{i},1 \right\rangle_{\omega_j},
\end{equation}
and obtain the following characterization theorem.

\begin{theorem}
\label{thm:CNS-nonsym}
The set of degrees of freedom
\begin{equation*}
\Sigma=\left\{\mathcal{I}_j,\mathcal{L}_j,\mathcal{V}_k\, :\, j=0,\dots,d, \, k=1,\dots,\tilde{d}\right\}    
\end{equation*}
is unisolvent on $\mathbb{P}_2\left(S_d\right)$ if and only if 
\begin{enumerate}
\item[(a)] the matrix $A$ is invertible;
\item[(b)] the Schur complement
    \begin{equation}\label{matT}
        T = M - \tilde{C} G^{-1} C\in\mathbb{R}^{(d+1)\times(d+1)}
    \end{equation}
    is invertible.
\end{enumerate}
\end{theorem}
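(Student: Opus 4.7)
The plan is to reduce the unisolvence statement to the nonsingularity of a block triangular linear system whose diagonal blocks are exactly $A$ and $H$, and then apply the Schur determinant identity together with the a priori invertibility of $G$ recorded in \eqref{Gmat}. To do so, I would start from the direct sum decomposition $\mathbb{P}_2(S_d)=\mathbb{P}_1(S_d)\oplus\mathbb{V}\oplus\mathbb{W}$ obtained by combining~\eqref{P2} and~\eqref{S2}. Any $p\in\mathbb{P}_2(S_d)$ is then uniquely written as
\begin{equation*}
p=\varphi+v+w,\qquad \varphi=\sum_{i=0}^{d}\beta_i\lambda_i\in\mathbb{P}_1(S_d),\quad v=\sum_{\ell=1}^{\tilde d}\xi_\ell\rho_\ell\in\mathbb{V},\quad w=\sum_{i=0}^{d}\gamma_i\psi_i\in\mathbb{W}.
\end{equation*}
Unisolvence of $\Sigma$ on $\mathbb{P}_2(S_d)$ is equivalent, by a standard dimension count (since $|\Sigma|=\dim\mathbb{P}_2(S_d)$), to the statement that the homogeneous system $\mathcal{I}_j(p)=\mathcal{L}_j(p)=\mathcal{V}_k(p)=0$ forces $\B\beta=\B\xi=\B\gamma=\B 0$.

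The key simplification comes from Remark~\ref{dsdsdsdsdss}, which guarantees $\mathcal{L}_j(\varphi)=0$ and $\mathcal{V}_k(\varphi)=0$ for every $\varphi\in\mathbb{P}_1(S_d)$. Consequently the constraints $\mathcal{L}_j(p)=0$ and $\mathcal{V}_k(p)=0$ depend only on $(\B\xi,\B\gamma)$ and, by the definitions~\eqref{matG}--\eqref{matM}, collapse to the block system
\begin{equation*}
H\begin{pmatrix}\B\xi\\ \B\gamma\end{pmatrix}=\B 0.
\end{equation*}
On the other hand, writing $\mathcal{I}_j(p)=\mathcal{I}_j(\varphi)+\mathcal{I}_j(v)+\mathcal{I}_j(w)$ and using~\eqref{matA}, the face-average constraints become $A\B\beta+B_v\B\xi+B_w\B\gamma=\B 0$, where $B_v$ and $B_w$ collect the values $\mathcal{I}_j(\rho_\ell)$ and $\mathcal{I}_j(\psi_i)$. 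The overall system therefore has the block triangular form
\begin{equation*}
\begin{pmatrix} A & B_v & B_w\\ 0 & G & C\\ 0 & \tilde C & M\end{pmatrix}\begin{pmatrix}\B\beta\\ \B\xi\\ \B\gamma\end{pmatrix}=\B 0,
\end{equation*}
and its determinant equals $\det(A)\,\det(H)$. Hence unisolvence is equivalent to $\det(A)\neq 0$ \emph{and} $\det(H)\neq 0$.

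It remains to translate the nonsingularity of $H$ into the nonsingularity of the Schur complement $T$ defined in~\eqref{matT}. Because $G$ is symmetric positive definite by~\eqref{Gmat}, the block factorization
\begin{equation*}
H=\begin{pmatrix} I & 0\\ \tilde C G^{-1} & I\end{pmatrix}\begin{pmatrix} G & 0\\ 0 & M-\tilde C G^{-1}C\end{pmatrix}\begin{pmatrix} I & G^{-1}C\\ 0 & I\end{pmatrix}
\end{equation*}
yields $\det(H)=\det(G)\det(T)$, so $\det(H)\neq 0$ if and only if $\det(T)\neq 0$. Combining the two equivalences gives conditions (a) and (b). I do not expect a real obstacle: the only point requiring care is the careful bookkeeping that turns the three families of functionals into a block triangular system, which is precisely what Remark~\ref{dsdsdsdsdss} is designed to enable.
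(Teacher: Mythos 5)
Your proof is correct, and it is organized differently from the paper's. The paper splits the argument into two directions: for necessity it argues by contradiction, explicitly constructing a nonzero annihilating polynomial when $A$ is singular (an affine $\varphi$ with $A\B a=\B 0$, killed by all $\mathcal{L}_j,\mathcal{V}_k$ thanks to Remark~\ref{dsdsdsdsdss}) or when $T$ is singular (a nonzero $\bar q\in\mathbb{S}_2\left(S_d\right)$ in $\ker(H)$, corrected by an affine $\bar\varphi$ chosen via the invertibility of $A$ so that the face averages also vanish); for sufficiency it performs the elimination $\B\xi=-G^{-1}C\B\gamma$, deduces $T\B\gamma=\B 0$, and finally uses $A$ to kill the affine part. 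You instead assemble the full square moment matrix of all $\frac{(d+1)(d+2)}{2}$ functionals with respect to the basis adapted to $\mathbb{P}_1\oplus\mathbb{V}\oplus\mathbb{W}$, observe that Remark~\ref{dsdsdsdsdss} makes it block triangular with diagonal blocks $A$ and $H$, and conclude from $\det=\det(A)\det(H)$ together with the factorization $\det(H)=\det(G)\det(T)$ (valid since $G$ is SPD by~\eqref{Gmat}). This one-shot determinant argument proves both directions simultaneously, and in fact yields the paper's Corollary~\ref{cor1} (unisolvence $\Leftrightarrow$ $A$ and $H$ nonsingular) as an intermediate step rather than a consequence; what it gives up is the paper's explicit exhibition of kernel polynomials, which makes visible how a failure of (a) or (b) concretely produces a nonzero polynomial annihilating all degrees of freedom. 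The only bookkeeping you rely on — that the $\mathcal{L}_j$ and $\mathcal{V}_k$ rows have zero coefficients on the $\mathbb{P}_1$ block, and that the coefficient-to-polynomial map is an isomorphism — is exactly what the paper's decomposition and Remark~\ref{dsdsdsdsdss} provide, so there is no gap.
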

\begin{proof}
($\Rightarrow$) 
We assume that $\Sigma$ is unisolvent on $\mathbb{P}_2\left(S_d\right)$.
Suppose, by contradiction, that $A$ is singular.  
Then there exists a nonzero vector
$\B a=\left(a_0,\dots,a_d\right)^{\top}$ such that
\begin{equation*}
    \B a\in \ker(A).
\end{equation*}
Hence the polynomial
$$\varphi=\sum_{j=0}^d a_j\lambda_j \in \mathbb{P}_1\left(S_d\right)$$ satisfies
\begin{equation*}
   \mathcal{I}_j(\varphi)=0, \quad j=0,\dots,d.  
\end{equation*}
At the same time, since each $\rho_k$ is orthogonal to  $\mathbb{P}_1\left(S_d\right)$ 
with respect to  $\langle \cdot, \cdot \rangle_{\Omega}$, we have
\begin{equation}\label{new1}
    \mathcal{V}_k(\varphi)=\left\langle \varphi,\rho_k \right\rangle_{\Omega}=0, \quad k=1,\dots,\tilde{d}.
\end{equation}
Moreover, by the construction of the face functions $q_j$, we obtain
\begin{equation}\label{new2}
    \mathcal{L}_j(\varphi)=\left\langle \varphi,q_j \right\rangle_{\omega_j}=0, \quad j=0,\dots,d.
\end{equation}
Since the barycentric coordinates $\left\{\lambda_j\right\}_{j=0}^d$ form a basis of $\mathbb{P}_1\left(S_d\right)$ and 
$\B a\neq\mathbf 0$, it follows that $\varphi\neq 0$.
Hence, the nonzero polynomial
$\varphi$ annihilates all degrees of freedom, contradicting unisolvence.
Therefore, $A$ must be nonsingular. \\

Next assume that $T$ is singular. 
Then the block matrix $H$ is singular, and there exist vectors
\begin{equation*}
\bar{\B\xi}=\left(\bar{\xi}_1,\dots,\bar{\xi}_{\tilde{d}}\right)^{\top}\in\mathbb{R}^{\tilde{d}}, \quad \bar{\B \gamma}=\left(\bar{\gamma}_0,\dots,\bar{\gamma}_{d}\right)^{\top}\in\mathbb{R}^{d+1}
\end{equation*}
such that
\begin{equation*}
\begin{pmatrix}
\bar{\B\xi}\\[2pt]
\bar{\B\gamma}
\end{pmatrix}\in \ker(H), \quad \begin{pmatrix}
\bar{\B\xi}\\[2pt]
\bar{\B\gamma}
\end{pmatrix}\neq \B 0.
\end{equation*}
We define 
\begin{equation*}
    \bar{v}=\sum_{\ell=1}^{\tilde{d}} \bar{\xi}_\ell \rho_\ell \in \mathbb{V}, \quad
\bar{w}=\sum_{i=0}^{d} \bar{\gamma}_i \psi_i \in \mathbb{W}
\end{equation*}
and we set
\begin{equation*}
\bar{q} =\bar{v} + \bar{w}\in\mathbb{S}_2\left(S_d\right).    
\end{equation*}
Then $\bar{q} \neq 0$ and satisfies
\begin{equation*}
\mathcal{L}_j\left(\bar{q}\right)=0, \quad \mathcal{V}_k\left(\bar{q}\right)=0, \quad j=0,\dots,d, \quad k=1,\dots,\tilde{d}. 
\end{equation*}
Since $A$ is invertible, there exists a unique $\bar \varphi\in\mathbb{P}_1\left(S_d\right)$ such that
\begin{equation}\label{new3}
    \mathcal{I}_j\left(\bar{\varphi}\right)
    = -\mathcal{I}_j\left(\bar{q}\right),
    \quad j = 0,\dots,d.
\end{equation}
Hence $\bar p=\bar \varphi+\bar q$ is nonzero and satisfies
\begin{equation*}
\mathcal{L}_j\left(\bar{p}\right)
=\mathcal{I}_j\left(\bar{p}\right)=\mathcal{V}_k\left(\bar{p}\right)
=0,
\quad j=0,\dots,d,\quad k=1,\dots,\tilde d,
\end{equation*}
again contradicting unisolvence. 
Therefore $T$ must be nonsingular.\\

($\Leftarrow$) 
Conversely, we assume $\det(A)\neq 0$ and $\det(T)\neq 0$. 
Let $p\in\mathbb{P}_2\left(S_d\right)$ satisfy
\begin{equation*}
\mathcal{I}_j(p)=\mathcal{L}_j(p)=\mathcal{V}_k(p)=0, \quad j=0,\dots,d, \quad k=1,\dots,\tilde{d}. 
\end{equation*}
To prove that $\Sigma$ is unisolvent on $\mathbb{P}_2\left(S_d\right)$, it suffices to show that $p=0$. Using~\eqref{P2} and~\eqref{S2}, we can express this polynomial as
\begin{equation*}
p = \varphi + v + w,
\end{equation*}
where
\begin{equation*}
\varphi=\sum_{i=0}^d a_i\lambda_i\in\mathbb{P}_1\left(S_d\right), \quad v=\sum_{\ell=1}^{\tilde{d}} {\xi}_\ell \rho_\ell \in \mathbb{V}, \quad
{w}=\sum_{j=0}^{d} {\gamma}_j \psi_j \in \mathbb{W}.
\end{equation*}
By Remark~\ref{dsdsdsdsdss}, it follows that
\begin{eqnarray}
\label{ddssccc1} 0&=&\mathcal{V}_k(p)=\mathcal{V}_k(v)+\mathcal{V}_k(w), \quad k=1,\dots,\tilde{d},\\ \label{ddssccc}
0&=&\mathcal{L}_j(p)=\mathcal{L}_j(v)+\mathcal{L}_j(w), \quad j=0,\dots,d. 
\end{eqnarray}
By setting
\begin{equation*}
  \B \xi=\left(\xi_1,\dots,\xi_{\tilde{d}}\right)^{\top}, \quad \B \gamma=\left(\gamma_0,\dots,\gamma_{d}\right)^{\top},
\end{equation*}
we can express the relations~\eqref{ddssccc1} and~\eqref{ddssccc} in matrix form as
\begin{equation*}
H\begin{pmatrix}
{\B\xi}\\[2pt]
{\B\gamma}
\end{pmatrix}
=\B 0,   
\end{equation*}
or equivalently
\begin{equation*}
\begin{pmatrix}
G & C\\
\tilde C & M
\end{pmatrix}\begin{pmatrix}
{\B\xi}\\[2pt]
{\B\gamma}
\end{pmatrix}
=\B 0,   
\end{equation*}
where $G$, $C$, $\tilde{C}$ and $M$ are defined in~\eqref{matG}, \eqref{matC}, \eqref{mattildeC} and \eqref{matM}, respectively. 
This system is equivalent to
\begin{equation*}
    \begin{cases}
G\B \xi + C\B \gamma = \B 0, \\[4pt]
\tilde{C}\B \xi + M\B \gamma = \B 0.
\end{cases}
\end{equation*}
Since $G$ is invertible, the first equation gives
\begin{equation}\label{xiaa}
\B \xi = -G^{-1}C\B \gamma.    
\end{equation}
Substituting into the second relation and using~\eqref{matT}, we have
\begin{equation*}
T\B \gamma = \B 0.
\end{equation*}
Since $T$ is nonsingular, $\B\gamma=\mathbf{0}$, and by~\eqref{xiaa},
$\B\xi=\mathbf{0}$. Hence
\begin{equation*}
    p=\varphi=\sum_{i=0}^d a_i\lambda_i,
\end{equation*}
and the remaining constraints reduce to
\begin{equation*}
\mathcal{I}_j(p)=\mathcal{I}_j(\varphi)=0, \quad j=0,\dots,d,
\end{equation*}
i.e.
\begin{equation*}
    A\B a=\B 0, \quad   \B a=\left(a_0,\dots,a_d\right)^{\top}.
\end{equation*}
Since $A$ is nonsingular, $\B a=\mathbf{0}$, and therefore $p=0$.
This proves unisolvence.
\end{proof}

As an immediate consequence of Theorem~\ref{thm:CNS-nonsym} and of the classical
Schur-complement factorization for block matrices (see, e.g.,
\cite[Sec.~3.1]{benzi2005numerical}), the unisolvence condition can be equivalently
reformulated in terms of the nonsingularity of the affine and quadratic moment
matrices.

\begin{corollary}\label{cor1}
The set of degrees of freedom $\Sigma$ is unisolvent on $\mathbb{P}_2\left(S_d\right)$ if and only if both
the affine moment matrix $A$ and the quadratic moment matrix $H$
are nonsingular.
\end{corollary}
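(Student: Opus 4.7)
The plan is to reduce Corollary~\ref{cor1} to Theorem~\ref{thm:CNS-nonsym} by means of the classical Schur-complement factorization, exploiting the fact that the diagonal block $G$ of the quadratic moment matrix $H$ is already known to be nonsingular.

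First I would recall that, by Theorem~\ref{thm:CNS-nonsym}, unisolvence of $\Sigma$ on $\mathbb{P}_2\left(S_d\right)$ is equivalent to the simultaneous nonsingularity of the affine matrix $A$ and of the Schur complement $T=M-\tilde C G^{-1}C$. Hence it suffices to show that, once $G$ is invertible, the nonsingularity of $T$ is equivalent to that of the full block matrix $H$ in~\eqref{matH}. This is exactly the content of the block-$LDU$ factorization recalled in~\cite[Sec.~3.1]{benzi2005numerical}: since $G$ is symmetric positive definite by~\eqref{Gmat}, one may write
\begin{equation*}
H=
\begin{pmatrix} G & C \\ \tilde C & M \end{pmatrix}
=
\begin{pmatrix} I & 0 \\ \tilde C G^{-1} & I \end{pmatrix}
\begin{pmatrix} G & 0 \\ 0 & T \end{pmatrix}
\begin{pmatrix} I & G^{-1}C \\ 0 & I \end{pmatrix},
\end{equation*}
so that $\det(H)=\det(G)\det(T)$. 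Since $\det(G)\neq 0$, this identity yields
\begin{equation*}
\det(H)\neq 0 \quad\Longleftrightarrow\quad \det(T)\neq 0.
\end{equation*}

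Putting these two ingredients together, I would conclude as follows. If $\Sigma$ is unisolvent, then by Theorem~\ref{thm:CNS-nonsym} both $A$ and $T$ are nonsingular, and by the factorization above $H$ is nonsingular as well. Conversely, if both $A$ and $H$ are nonsingular, then the same factorization forces $T$ to be nonsingular, and Theorem~\ref{thm:CNS-nonsym} again gives unisolvence.

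There is essentially no genuine obstacle here: the only point that needs to be stressed for the argument to go through is the a priori invertibility of $G$, which was already established in~\eqref{Gmat} from the fact that $G$ is a Gram matrix of the linearly independent polynomials $\rho_1,\dots,\rho_{\tilde d}$ with respect to the strictly positive density $\Omega$. With that, the corollary is a direct algebraic corollary of Theorem~\ref{thm:CNS-nonsym} via the Schur determinant identity.
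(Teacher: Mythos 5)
Your proposal is correct and follows essentially the same route as the paper: invoke Theorem~\ref{thm:CNS-nonsym} and then use the Schur determinant identity $\det(H)=\det(G)\det(T)$, valid because the Gram block $G$ is nonsingular, to exchange invertibility of $T$ for invertibility of $H$. Writing out the block-$LDU$ factorization explicitly is a minor elaboration of the same argument the paper gives by citation.
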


\begin{proof}
By Theorem~\ref{thm:CNS-nonsym}, unisolvence of $\Sigma$ is equivalent to the nonsingularity of $A$
and of the associated Schur complement $T$.
It remains to show
that
\[
T = M - \tilde{C}G^{-1}C
\quad\text{is invertible if and only if}\quad
H =
\begin{pmatrix}
G & C\\
\tilde{C} & M
\end{pmatrix}
\ \text{is invertible.}
\]
Since the Gram matrix $G$ is symmetric positive definite and therefore invertible,
the standard Schur-complement factorization yields, see e.g.~\cite{benzi2005numerical,MatComp}
\[
\det(H)=\det(G)\det(T).
\]
Consequently, $H$ is nonsingular if and only if $T$ is nonsingular.
\end{proof}

\begin{remark}
\label{rem:stochasticA}
The matrix $A$ introduced in~\eqref{matA} is a row-stochastic matrix with zero diagonal and strictly positive off-diagonal entries. Indeed, since $\lambda_j=0$ on $F_j$ while $\lambda_i>0$ on $F_j$ for $i\neq j$, and since $\omega_j$ is nonnegative on $F_j$, we have
\begin{equation*}
[A]_{jj}=0
\quad\text{and}\quad
[A]_{ji}
=\mathcal{I}_j\left(\lambda_i\right)
=\int_{F_j}\lambda_i(\B x)\omega_j(\B x)d\B x>0,
\quad i\neq j.
\end{equation*}
Moreover, since $\omega_j$ is normalized on $F_j$, we obtain
\begin{equation*}
\sum_{i=0}^d [A]_{ji}
=\int_{F_j}\left(\sum_{i=0}^d\lambda_i(\B x)\right)\omega_j(\B x)d\B x
=\int_{F_j}\omega_j(\B x)d\B x
=1.
\end{equation*}
When $d=2$, the matrix $A$ is $3\times 3$, and one easily verifies that
every row-stochastic matrix with zero diagonal and strictly positive off-diagonal entries is necessarily nonsingular. For $d\ge 3$, however, this property fails: as shown in~\cite{Tarazaga1993}, 
row-stochastic zero-diagonal matrices of size at least $4$ may be singular even when all off-diagonal entries are strictly positive. 
Hence assumption~($a$) in Theorem~\ref{thm:CNS-nonsym} is essential in higher dimensions.
\end{remark}

\begin{remark}
The proposed framework remains valid under arbitrary partial use of 
quadratic moments, whether located on edges or in the interior of $S_d$. 
More precisely, any subset of face moments $\left\{\mathcal{L}_{j}\right\}$, together with 
an arbitrary user–selected subspace $\mathbb{U} \subset \mathbb{S}_{2}\left(S_d\right)$ of interior 
quadratic polynomials, defines a well-posed histopolation system on a quadratic subspace, 
provided that the face averages $\left\{\mathcal{I}_j\right\}$ are retained. Unisolvence follows from the same structural ingredients used in 
Theorem~\ref{thm:CNS-nonsym}, namely:
\begin{itemize}
    \item[$(i)$] the invertibility of the affine block $A$;
    \item[$(ii)$] the linear independence of the activated 
    quadratic moments on their respective supporting subspace.
\end{itemize}
No further assumptions and no modifications of the proof are required.
\end{remark}

\begin{remark}
While Theorem~\ref{thm:CNS-nonsym} provides necessary and sufficient conditions
for unisolvence, its formulation involves the matrices 
$G$, $C$, $\tilde{C}$, and $M$, which depend on the particular choice
of the face and interior subspaces $\mathbb{W}$ and $\mathbb{V}$.
We introduce the linear map
\begin{equation}\label{LW}
    \mathcal{L} : 
    q \in \mathbb{S}_2\left(S_d\right) \longmapsto 
    \left(\mathcal{L}_0(q),\dots,\mathcal{L}_d(q)\right)^{\top} 
    \in \mathbb{R}^{d+1},
\end{equation}
and denote by $\mathcal{L}_{\mathbb{W}}=\mathcal{L}_{_{\mkern 1mu \vrule height 2ex\mkern2mu \mathbb{W}}}$. 
If the face functionals $\mathcal{L}_0,\dots,\mathcal{L}_d$ are linearly independent on $\mathbb{W}$ 
(equivalently, if $M$ has full rank), then $\mathcal{L}_{\mathbb{W}}$ is an isomorphism. 
In this case, it is possible to select a face basis $\tilde{\psi}_0,\dots,\tilde{\psi}_d \in \mathbb{W}$ such that
\begin{equation*}
\mathcal{L}_{\mathbb{W}}\left(\tilde{\psi}_j\right) = e_j,
\end{equation*}
where $e_j$ denotes the $j$-th canonical basis vector in $\mathbb{R}^{d+1}$. Then
\begin{equation*}
    \mathcal{L}_j\left(\tilde{\psi}_i\right) = \delta_{ij}
    \quad \Rightarrow \quad
    M = I_{d+1}.
\end{equation*}
Such a normalization can be achieved through an appropriate change of basis within $\mathbb{W}$. As a consequence, the Schur complement simplifies to
\begin{equation*}
    T = I_{d+1} - \tilde{C} G^{-1} C,
\end{equation*}
which is particularly convenient for both theoretical analysis 
and the numerical implementation of the associated degrees of freedom.
\end{remark}

\begin{theorem}\label{thm14}
If the face functionals $\mathcal{L}_0,\dots,\mathcal{L}_d$ are linearly independent on $\mathbb{S}_2\left(S_d\right)$,
then there exists a decomposition
\begin{equation*}
    \mathbb{S}_2\left(S_d\right) = \mathbb{V} \oplus \mathbb{W},
\end{equation*}
such that
\begin{itemize}
  \item $\mathbb{W}$ has dimension $d+1$ and satisfies 
  $\mathbb{W} \perp \mathbb{V}$ with respect to the inner product 
  $\left\langle \cdot,\cdot \right\rangle_{\Omega}$;
  \item the matrix $M$, defined in~\eqref{matM}, is invertible;
  \item by an appropriate change of basis in $\mathbb{W}$, it is possible to normalize $M$ to the identity, $M = I_{d+1}$.
\end{itemize}
If, in addition, the basis of $\mathbb{V}$ is chosen to be orthonormal  
with respect to $\langle\cdot,\cdot\rangle_{\Omega}$, then $G = I_{\tilde d}$, $C=0$,
and the Schur complement associated with the unisolvence condition takes the canonical form
\begin{equation*}
    T = M = I_{d+1}.
\end{equation*}
\end{theorem}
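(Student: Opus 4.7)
The strategy is to construct $\mathbb{V}$ and $\mathbb{W}$ directly from the moment map $\mathcal{L}$ introduced in~\eqref{LW}, and then to exploit $\langle \cdot,\cdot \rangle_\Omega$-orthogonality to collapse the block structure of $H$ in~\eqref{matH} to the canonical form $T = I_{d+1}$.

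First I would use the linear independence hypothesis to identify the two subspaces. Since $\mathcal{L}_0, \dots, \mathcal{L}_d$ are linearly independent on $\mathbb{S}_2\left(S_d\right)$, the map $\mathcal{L}:\mathbb{S}_2\left(S_d\right)\to\mathbb{R}^{d+1}$ is surjective, so by rank-nullity together with~\eqref{dimS2} its kernel has dimension $\tilde d$. I would set $\mathbb{V}:=\ker(\mathcal{L})$ and pick any basis $\{\rho_1,\dots,\rho_{\tilde d}\}$, which is a legitimate realization of~\eqref{spaceV}. I would then define $\mathbb{W}$ as the $\langle \cdot,\cdot \rangle_\Omega$-orthogonal complement of $\mathbb{V}$ in $\mathbb{S}_2\left(S_d\right)$; this immediately gives $\mathbb{S}_2\left(S_d\right)=\mathbb{V}\oplus\mathbb{W}$, the orthogonality $\mathbb{W}\perp\mathbb{V}$, and $\dim(\mathbb{W})=d+1$.

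Next I would show that $M$ is invertible and can be normalized to the identity. The restriction $\mathcal{L}_{\mathbb{W}}$ is injective: if $w\in\mathbb{W}$ satisfies $\mathcal{L}_{\mathbb{W}}(w)=0$, then $w\in\mathbb{V}\cap\mathbb{W}=\{0\}$. Since $\dim(\mathbb{W})=d+1=\dim(\mathbb{R}^{d+1})$, $\mathcal{L}_{\mathbb{W}}$ is an isomorphism, and for any basis $\{\psi_0,\dots,\psi_d\}$ of $\mathbb{W}$ the matrix $M$ in~\eqref{matM} represents $\mathcal{L}_{\mathbb{W}}$ in that basis and is therefore nonsingular. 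Setting $\tilde\psi_j:=\mathcal{L}_{\mathbb{W}}^{-1}(e_j)$ yields $\mathcal{L}_j(\tilde\psi_i)=\delta_{ij}$, so $M=I_{d+1}$ in this new basis.

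For the canonical form of the Schur complement I would combine the previous normalization with an $\langle \cdot,\cdot \rangle_\Omega$-orthonormal basis of $\mathbb{V}$. Orthonormality of $\{\rho_k\}$ gives $G=I_{\tilde d}$ from~\eqref{matG}. Since $\psi_i\in\mathbb{W}$, $\rho_k\in\mathbb{V}$ and $\mathbb{W}\perp\mathbb{V}$, the coupling block~\eqref{matC} has entries $[C]_{ki}=\langle \psi_i,\rho_k \rangle_\Omega=0$, so $C=0$; substituting into~\eqref{matT} yields $T=M-\tilde C\,G^{-1}C=M=I_{d+1}$. The only mildly delicate point is the initial identification $\mathbb{V}=\ker(\mathcal{L})$, which requires observing that the freedom in the choice of $\mathbb{V}$ allowed by~\eqref{spaceV} is genuine; once that is recognized, the remainder reduces to routine dimension counting and change-of-basis arguments, with no essential obstacle.
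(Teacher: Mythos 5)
Your proposal is correct; the only difference from the paper is the order of construction, which leads to a (generally) different but equally admissible decomposition. The paper uses surjectivity of $\mathcal{L}$ to lift the canonical vectors $e_i$ to elements $\psi_i\in\mathbb{S}_2\left(S_d\right)$ with $\mathcal{L}\left(\psi_i\right)=e_i$, so that $M=I_{d+1}$ is built in from the start, sets $\mathbb{W}=\operatorname{span}\left\{\psi_0,\dots,\psi_d\right\}$, and then defines $\mathbb{V}$ as the $\langle\cdot,\cdot\rangle_{\Omega}$-orthogonal complement of $\mathbb{W}$. You instead take $\mathbb{V}=\ker(\mathcal{L})$, which has dimension $\tilde d$ by rank--nullity, define $\mathbb{W}=\mathbb{V}^{\perp_\Omega}$, and recover invertibility of $M$ from the injectivity of $\mathcal{L}_{\mathbb{W}}$ (via $\mathbb{V}\cap\mathbb{W}=\{0\}$) plus a dimension count, normalizing afterwards with $\tilde\psi_j=\mathcal{L}_{\mathbb{W}}^{-1}\left(e_j\right)$. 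Both routes deliver every claim in the statement; yours has the small additional benefit that $\tilde C=0$ automatically (since every $\rho_\ell\in\ker(\mathcal{L})$ satisfies $\mathcal{L}_j\left(\rho_\ell\right)=0$), so in your normalized configuration the whole block matrix $H$ becomes the identity, whereas the paper's construction only guarantees $G=I_{\tilde d}$, $C=0$, $M=I_{d+1}$ and hence $T=I_{d+1}$. The paper's variant, on the other hand, leaves $\mathbb{W}$ free to be any lift of the canonical basis, which is the flexibility exploited later in Theorem~\ref{newtheam} where $\mathbb{W}$ is prescribed explicitly; your kernel-first choice pins $\mathbb{W}$ down uniquely. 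No gap either way.
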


\begin{proof}
Since the functionals $\mathcal{L}_0,\dots,\mathcal{L}_d$ are linearly 
independent on $\mathbb{S}_2\left(S_d\right)$, the linear map
\begin{equation*}
    \mathcal{L} : q \in \mathbb{S}_2\left(S_d\right) \longmapsto 
\left(\mathcal{L}_0(q),\dots,\mathcal{L}_d(q)\right)^{\top} \in \mathbb{R}^{d+1}
\end{equation*}
has rank $d+1$. Then $\mathcal{L}$ is 
surjective. Hence, for each canonical vector $e_i \in \mathbb{R}^{d+1}$ 
there exists $\psi_i \in \mathbb{S}_2\left(S_d\right)$ such that 
\begin{equation*}
\mathcal{L}\left(\psi_i\right) = e_i,
\quad i=0,\dots,d.    
\end{equation*}
Equivalently,
\begin{equation*}
\mathcal{L}_j\left(\psi_i\right) = \delta_{ij},
\quad i,j = 0,\dots,d,   
\end{equation*}
so that the matrix
\begin{equation*}
    M= \left[\mathcal{L}_j\left(\psi_i\right)\right]_{i,j=0}^d
\end{equation*}
is the identity matrix and, in particular, is invertible. We set
\begin{equation*}
    \mathbb{W} = \operatorname{span}\left\{\psi_0,\dots,\psi_d\right\}.
\end{equation*}
By construction, the linear map 
$\mathcal{L}_{\mathbb{W}} : \mathbb{W} \to \mathbb{R}^{d+1}$, defined in~\eqref{LW}, 
is an isomorphism, and the corresponding matrix with respect to the basis 
$\left\{\psi_0,\dots,\psi_d\right\}$ of $\mathbb{W}$ and the canonical basis of $\mathbb{R}^{d+1}$ 
is $M= I_{d+1}$.

We then define the interior subspace as
\begin{equation*}
    \mathbb{V} 
    = \left\{
        v \in \mathbb{S}_2\left(S_d\right) :
        \langle v, w \rangle_\Omega = 0 \ \text{for all } w \in \mathbb{W}
      \right\}.
\end{equation*}
Hence,
\begin{equation*}
    \mathbb{S}_2\left(S_d\right) = \mathbb{V} \oplus \mathbb{W},
    \quad \mathbb{V} \perp \mathbb{W}.
\end{equation*}
If, in addition, the basis of $\mathbb{V}$ is chosen to be orthonormal 
with respect to $\langle\cdot,\cdot\rangle_{\Omega}$, then $G = I_{\tilde d}$ and $C=0$.
Consequently, the Schur complement associated with the unisolvence condition reduces to
\begin{equation*}
    T = M - \tilde{C} G^{-1} C = I_{d+1}.
\end{equation*}
\end{proof}

\begin{remark}
If the face functionals $\mathcal{L}_0,\dots,\mathcal{L}_d$ are linearly dependent on $\mathbb{S}_2\left(S_d\right)$, 
then for any $(d+1)$-dimensional subspace 
$\mathbb{W} \subset \mathbb{S}_2\left(S_d\right)$ and any basis 
$\left\{\psi_j\right\}_{j=0}^d$ of          $\mathbb{W}$, 
the associated face matrix $M$ is singular.
Hence, the face-separation assumption 
is not only sufficient but also necessary for the construction of an orthogonal decomposition
\begin{equation*}
    \mathbb{S}_2\left(S_d\right) = \mathbb{V} \oplus \mathbb{W},
\end{equation*}
with an invertible face block.
\end{remark}

To conclude the unisolvence analysis, we exhibit an explicit orthogonal decomposition of the quadratic space and a constructive choice of the face test polynomials that makes the face matrix $M$ invertible.

\begin{theorem}\label{newtheam}
If the matrix $A$ defined in~\eqref{matA} is invertible, then there exist explicit choices of
\begin{enumerate}
    \item functions $q_j\in \mathbb{S}_2\left(F_j\right)$;
    \item an interior space $\mathbb{V}$ with orthonormal basis
$\left\{\rho_k\right\}_{k=1}^{\tilde{d}}$; 
    \item a face space $\mathbb{W}$ with basis $\left\{\psi_j\right\}_{j=0}^d$
\end{enumerate}
such that:
\begin{enumerate}
\item the set $\Sigma$ is unisolvent on $\mathbb{P}_2\left(S_d\right)$;
\item the spaces $\mathbb{V}$ and $\mathbb{W}$ are orthogonal with respect to 
      $\langle\cdot,\cdot\rangle_{\Omega}$, and therefore the associated blocks satisfy 
      $G = I_{\tilde d}$ and $C = 0$.
\end{enumerate}
\end{theorem}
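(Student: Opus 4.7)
The plan is to exhibit an explicit construction of $q_j$, $\mathbb{V}$, $\mathbb{W}$ and then to read off the two conclusions by combining this construction with the characterization of unisolvence provided by Theorem~\ref{thm:CNS-nonsym}. On each face $F_j$ I would fix a concrete reference quadratic $\tilde q_j\in\mathbb{P}_2\left(F_j\right)\setminus\mathbb{P}_1\left(F_j\right)$ (for instance the monomial $\mu_{j,k}\mu_{j,\ell}$ for a prescribed pair of indices $k,\ell\neq j$) and set
\begin{equation*}
q_j=\tilde q_j-\Pi_{1,\omega_j}\left(\tilde q_j\right)\in\mathbb{S}_2\left(F_j\right),
\end{equation*}
so that $\left\langle q_j,\varphi\right\rangle_{\omega_j}=0$ for every $\varphi\in\mathbb{P}_1\left(F_j\right)$ and $\left\|q_j\right\|_{\omega_j}>0$. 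I would then lift $\tilde q_j$ from $F_j$ to the ambient simplex via the barycentric substitution $\mu_{j,i}\mapsto\lambda_i$, producing a polynomial $\widetilde\psi_j\in\mathbb{P}_2\left(S_d\right)$, and subtract its $\Omega$-orthogonal projection onto $\mathbb{P}_1\left(S_d\right)$ to land in $\mathbb{S}_2\left(S_d\right)$:
\begin{equation*}
\psi_j=\widetilde\psi_j-\Pi_{1,\Omega}\left(\widetilde\psi_j\right),\qquad \mathbb{W}=\operatorname{span}\left\{\psi_0,\dots,\psi_d\right\}.
\end{equation*}

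Having fixed $\mathbb{W}$, I would take $\mathbb{V}$ as the orthogonal complement of $\mathbb{W}$ inside $\mathbb{S}_2\left(S_d\right)$ with respect to $\left\langle\cdot,\cdot\right\rangle_{\Omega}$, and apply Gram--Schmidt to any basis of $\mathbb{V}$ to obtain an $\Omega$-orthonormal family $\left\{\rho_k\right\}_{k=1}^{\tilde d}$. By construction the mixed block vanishes ($C=0$) and the interior Gram block equals the identity ($G=I_{\tilde d}$), which is exactly conclusion~(2). The Schur complement $T=M-\tilde C G^{-1}C$ from \eqref{matT} therefore simplifies to $T=M$, and by Theorem~\ref{thm:CNS-nonsym} the unisolvence of $\Sigma$ reduces to the invertibility of $A$ (the standing hypothesis) together with the invertibility of $M$.

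The core of the argument is the control of the face matrix $M$. Since $\Pi_{1,\Omega}\left(\widetilde\psi_i\right)$ restricts to a linear polynomial on $F_j$ and $q_j$ is $\omega_j$-orthogonal to $\mathbb{P}_1\left(F_j\right)$, the linear correction contributes zero to every face functional, so that
\begin{equation*}
M_{ji}=\left\langle\widetilde\psi_i|_{F_j},\,q_j\right\rangle_{\omega_j},
\end{equation*}
and in particular $M_{jj}=\left\|q_j\right\|_{\omega_j}^{2}>0$. For the off-diagonal entries, the lifted monomial $\widetilde\psi_i=\lambda_k\lambda_\ell$ vanishes identically on every face $F_j$ with $j\in\{k,\ell\}$, which generates a controlled zero pattern in $M$. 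The main obstacle I expect lies precisely at this design step: choosing the index pairs $\{k,\ell\}$ (or, more generally, the lifts $\widetilde\psi_j$) in a dimension-independent way that guarantees that $M$ is invertible. In dimension $d=2$ the choice is essentially forced, the matrix $M$ becomes diagonal with strictly positive entries, and invertibility is immediate; for $d\ge 3$ one must either select a combinatorially consistent assignment rendering $M$ triangular (or strictly diagonally dominant), or else replace monomial lifts by a symmetric combination of quadratic monomials that secures invertibility of $M$ for every admissible density $\Omega$.

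Once $M$ is shown to be invertible, combining this with the assumed invertibility of $A$, Theorem~\ref{thm:CNS-nonsym} delivers unisolvence of $\Sigma$ on $\mathbb{P}_2\left(S_d\right)$, which is conclusion~(1), completing the proof.
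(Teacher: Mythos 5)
Your construction of $\mathbb{W}$ by projecting lifted face quadratics, the choice of $\mathbb{V}$ as the $\Omega$-orthogonal complement with an orthonormal basis, and the resulting reduction $G=I_{\tilde d}$, $C=0$, $T=M$, followed by the appeal to Theorem~\ref{thm:CNS-nonsym}, all match the paper's strategy. However, the step you yourself flag as the "main obstacle" is a genuine gap, and it is exactly the heart of the paper's proof: you fix the face test polynomials $q_j$ a priori (as projections of prescribed reference quadratics) and then hope that the resulting $M$ is triangular, diagonally dominant, or otherwise invertible. Positivity of the diagonal entries $M_{jj}=\left\|q_j\right\|_{\omega_j}^2$ does not control the off-diagonal entries $\left\langle g_i|_{F_j},q_j\right\rangle_{\omega_j}$, which for a general strictly positive density $\Omega$ (and its induced face densities $\omega_j$) can be of arbitrary size and sign; neither triangularity nor strict diagonal dominance can be guaranteed for $d\ge 3$, and you offer no argument that closes this.

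The paper resolves this by reversing the order of the construction: the $q_j$ are chosen \emph{adaptively}, as annihilators. With the cyclic bubbles $g_j=\lambda_j\lambda_{j+1}$, exactly $d-1$ of the restrictions $g_\ell|_{F_j}$ (those with $\ell\notin\{j-1,j\}$) are nonzero on $F_j$; the associated functionals $q\mapsto\left\langle (I-\Pi_{1,\omega_j})(g_\ell|_{F_j}),q\right\rangle_{\omega_j}$ are linearly independent on $\mathbb{S}_2\left(F_j\right)$, and since $\dim\left(\mathbb{S}_2\left(F_j\right)\right)=\frac{d(d-1)}{2}>d-2$, one can pick $q_j\in\mathbb{S}_2\left(F_j\right)$ killing all of them except the one indexed by $\ell^{\star}=j+2\ \mathrm{mod}\,(d+1)$, on which it is nonzero. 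With this coordinated choice each row of $M$ has exactly one nonzero entry, located in pairwise distinct columns, so $M$ is a permuted diagonal matrix and hence invertible for \emph{every} admissible density, in every dimension. Note also that in the paper's construction the single nonzero entry of each row is generally off the diagonal, so the intended structure is not the positive-diagonal pattern your proposal aims for. To repair your argument you would need to replace the a priori choice of $q_j$ by such a duality/annihilation argument (or prove invertibility of your specific $M$ for all $\Omega$, which is not true in the generality required).
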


\begin{proof}
Let $p\in\mathbb P_2\left(S_d\right)$ satisfy
\begin{equation*}
    \mathcal I_j(p)=\mathcal L_j(p)=\mathcal V_k(p)=0,\quad j=0,\dots,d,\ k=1,\dots,\tilde d.
\end{equation*}
Since the matrix $A$ is invertible, Theorem~\ref{thm:CNS-nonsym}
implies that unisolvence follows once the corresponding Schur complement
  \begin{equation*}
        T = M - \tilde{C} G^{-1} C\in\mathbb{R}^{(d+1)\times(d+1)}
    \end{equation*}
is invertible.

We construct an explicit decomposition
\begin{equation*}
\mathbb{S}_2\left(S_d\right)=\mathbb{V}\oplus \mathbb{W},
\end{equation*}
for which $G = I_{\tilde d}$, $C = 0$, and $T = M$. To this aim, let
\begin{equation}\label{gjj}
    g_j=\lambda_j\lambda_{j+1},\quad j=0,\dots,d, \text{ mod}(d+1),
\end{equation}
and define
\begin{equation*}
    \psi_j=\left(I-\Pi_{1,\Omega}\right)\left(g_j\right)\in \mathbb{S}_2\left(S_d\right),
\end{equation*}
where $\Pi_{1,\Omega}$ is defined in~\eqref{projOmega}. Since
\begin{equation*}
    \operatorname{span}\left\{\lambda_j\lambda_{j+1}\, : \, j=0,\dots,d\right\}\cap \mathbb{P}_1\left(S_d\right)=\{0\},
\end{equation*}
it follows that, if 
\begin{equation*}
    0=\sum_{j=0}^d c_j \psi_j=\sum_{j=0}^d c_j \left(I-\Pi_{1,\Omega}\right)\left(g_j\right)=\left(I-\Pi_{1,\Omega}\right)\left(\sum_{j=0}^d c_j g_j\right),
\end{equation*}
then 
\begin{equation*}
    \sum_{j=0}^d c_j g_j\in \mathbb{P}_1\left(S_d\right).
\end{equation*}
But, by definition~\eqref{gjj}, we also have
\begin{equation*}
   \sum_{j=0}^d c_j g_j\in \operatorname{span}\left\{\lambda_j\lambda_{j+1}\, : \, j=0,\dots,d\right\}.
\end{equation*}
Then
\begin{equation*}
    \sum_{j=0}^d c_j g_j\in \operatorname{span}\left\{\lambda_j\lambda_{j+1}\, : \, j=0,\dots,d\right\}\cap \mathbb{P}_1\left(S_d\right)
\end{equation*}
and hence
\begin{equation*}
     \sum_{j=0}^d c_j g_j=0.
\end{equation*}
Since the family $\left\{g_j\right\}_{j=0}^d$ is linearly independent, we conclude $c_0=\dots=c_d=0$. Consequently, the set $\left\{\psi_j\right\}_{j=0}^d$ is linearly independent. We therefore set
\begin{equation*}
   \mathbb{W}=\operatorname{span}\left\{\psi_0,\dots,\psi_d\right\}\subset \mathbb{S}_2\left(S_d\right),
\end{equation*}
and define $\mathbb{V}$ as the $\langle\cdot,\cdot\rangle_\Omega$–orthogonal complement of $\mathbb{W}$ in $\mathbb{S}_2\left(S_d\right)$. 
Hence $\mathbb{V}\perp \mathbb{W}$ and
\begin{equation*}
\mathbb{S}_2\left(S_d\right)=\mathbb{V}\oplus \mathbb{W}. 
\end{equation*}
Let $\left\{\rho_k\right\}_{k=1}^{\tilde d}$ be an orthonormal basis of $\mathbb{V}$ with respect to $\langle\cdot,\cdot\rangle_\Omega$. Then
\begin{equation*}
G=\left[\left\langle\rho_\ell,\rho_k\right\rangle_\Omega\right]_{k\ell}=I_{\tilde d},\quad
C=\left[\left\langle \psi_i,\rho_k\right\rangle_\Omega\right]_{ki}=0.    
\end{equation*}
In this setting, the Schur complement reduces to
\begin{equation*}
    T=M-\tilde CG^{-1}C=M.
\end{equation*}
We now show that $M$ is invertible. To this end, we fix $j\in\{0,\dots,d\}$ and we set 
\begin{equation}\label{well}
h_{\ell}=\left(I-\Pi_{1,\omega_j}\right)\left(g_{{\ell}_{\mkern 1mu \vrule height 2ex\mkern2mu F_j}}\right)\in\mathbb{S}_2\left(F_j\right), \quad \ell\notin\{j-1,j\},   
\end{equation}
and define the linear functionals 
\begin{equation*}
\chi_\ell:q\in\mathbb{S}_2\left(F_j\right)\to \left\langle h_{\ell},q\right\rangle_{\omega_j}\in\mathbb{R}, \quad \ell\in B_j:=\{0,\dots,d\}\setminus\{j-1,j\}.
\end{equation*}
The family of $\left\{g_{{\ell}_{\mkern 1mu \vrule height 2ex\mkern2mu F_j}}\right\}_{\ell\in B_j}$ is linearly independent, since each function is a
product of two face barycentric coordinates and vanishes on a distinct subset of the edges of
$F_j$. Since $\Pi_{1,\omega_j}$ is an orthogonal projection onto $\mathbb{P}_1\left(F_j\right)$, the same
injectivity argument used above applies and shows that the family
$\left\{h_\ell\right\}_{\ell\in B_j}$ is linearly independent. In particular, for a fixed $\ell^{\star}\in B_j$, we have
\begin{equation*}
h_{\ell^{\star}}\notin\operatorname{span}\left\{h_{\ell}\, :\, \ell\in B_j\setminus\{\ell^{\star}\}\right\}.
\end{equation*}
As the inner product $\langle\cdot,\cdot\rangle_{\omega_j}$ is nondegenerate on $\mathbb{S}_2\left(F_j\right)$, the associated set of linear functionals $\left\{\chi_\ell\right\}_{\ell\in B_j}$ is also linearly independent in the dual space $\left(\mathbb{S}_2\left(F_j\right)\right)^{\star}$. Therefore, since $$\dim\left(\mathbb{S}_2\left(F_j\right)\right)>\left|B_j\right|-1=d-2,$$ there exists a nonzero element
\begin{equation}\label{cond1}
    q_j \in \bigcap_{\ell\in B_j\setminus\{\ell^{\star}\}}\ker\left(\chi_\ell\right)\subset \mathbb{S}_2\left(F_j\right),
\end{equation}
and moreover
\begin{equation}\label{cond2}
\chi_{\ell^{\star}}\left(q_j\right)=\langle h_{\ell^{\star}},q_j\rangle_{\omega_j}\neq 0.
\end{equation}
Using~\eqref{well} and~\eqref{propqj}, we have
\begin{equation}\label{cond3}
\left\langle h_{\ell},q_j\right\rangle_{\omega_j}
=\left\langle \left(I-\Pi_{1,\omega_j}\right)\left(g_{{\ell}_{\mkern 1mu \vrule height 2ex\mkern2mu F_j}}\right),q_j\right\rangle_{\omega_j}
=\left\langle g_{{\ell}_{\mkern 1mu \vrule height 2ex\mkern2mu F_j}},q_j\right\rangle_{\omega_j}, \quad \ell\in B_j.
\end{equation}
Combining~\eqref{cond1},~\eqref{cond2} and~\eqref{cond3}, we conclude
\begin{equation*}
\left\langle g_{{\ell}_{\mkern 1mu \vrule height 2ex\mkern2mu F_j}},q_j\right\rangle_{\omega_j}=0, \quad \ell\in B_j\setminus\{\ell^{\star}\}, \quad \left\langle g_{{\ell^{\star}}_{\mkern 1mu \vrule height 2ex\mkern2mu F_j}},q_j\right\rangle_{\omega_j}\neq0.
\end{equation*}
By selecting the indices $\ell^{\star}$ in a coordinated way, for instance
\begin{equation*}
    \ell^{\star}=j+2, \ \operatorname{mod}(d+1),
\end{equation*}
each row of the face matrix
\begin{equation*}
M=\left[\mathcal{L}_j\left(\psi_{i}\right)\right]_{ji}
=\left[\mathcal{L}_j\left(g_i\right)\right]_{ji}
\end{equation*}
contains exactly one nonzero entry, located in column $\ell^{\star}=j+2$.  
Since the chosen indices $\left\{\ell^{\star}\right\}_{j=0}^d$ are pairwise distinct,
the nonzero columns are all different. Up to a permutation of columns,
$M$ is therefore diagonal with nonzero diagonal entries, and hence invertible.
\end{proof}

\begin{remark}
In summary, the orthogonal construction of Theorem~\ref{newtheam} ensures that, under the sole
assumption that the matrix $A$ is invertible, it is possible to realize a configuration with
$G = I_{\tilde d}$, $C = 0$, and $T = M$ invertible, thereby guaranteeing unisolvence according to
Theorem~\ref{thm:CNS-nonsym}.
\end{remark}

\subsection{Inf-sup stability via the Schur complement}
We now investigate the \emph{quantitative stability} of the quadratic histopolation problem.
Building on the unisolvence result of Theorem~\ref{thm:CNS-nonsym}, we derive an
inf--sup condition that characterizes the robustness of the coupling between interior and
face degrees of freedom. In particular, Theorem~\ref{thm:CNS-nonsym} provides a qualitative unisolvence
criterion based on the invertibility of the matrices $A$ and
\begin{equation*}
    T = M - \tilde{C} G^{-1} C.
\end{equation*}
Although this characterization guarantees well-posedness, it remains
\emph{purely qualitative} and does not quantify the stability of the reconstruction, nor
the strength of the coupling encoded in the Schur complement~$T$. Our goal is therefore to obtain a quantitative refinement. In particular, we show that
the \emph{inf--sup stability} of the enriched quadratic histopolation scheme is determined by the
spectral properties of a reduced operator~$\hat{S}$. This leads to an explicit and dimension-independent
expression for the stability constant.  Before establishing the main result,
we introduce the necessary preliminary definitions and auxiliary lemmas.\\

Throughout this section, we assume that
the matrix $M$ is \emph{invertible}, which corresponds to the case where the linear map $\mathcal{L}$ defined in~\eqref{LW} is linearly independent
on~$\mathbb{W}$. We denote by
\begin{equation}\label{matDs}
  D =
  \begin{pmatrix}
  M^{\top}M & 0\\[2pt]
  0 & G
  \end{pmatrix},
  \quad
  \B u(q)
  =  \begin{pmatrix}
 \mathcal{L}(q) \\[2pt]
 \mathcal{V}(q)
  \end{pmatrix}
  \in\mathbb{R}^{(d+1)+\tilde{d}}, \quad q\in\mathbb{S}_2\left(S_d\right),
\end{equation}
where
\begin{equation}\label{funLeV}
\mathcal{L}(q)
=\left(\mathcal{L}_0(q),\dots,\mathcal{L}_d(q)\right)^{\top}
\in\mathbb{R}^{d+1},
\quad
\mathcal{V}(q)
=\left(\mathcal{V}_1(q),\dots,\mathcal{V}_{\tilde{d}}(q)\right)^{\top}
\in\mathbb{R}^{\tilde{d}}.
\end{equation}
For any
\begin{equation*}
\B y(\B\eta,\B\zeta)= \begin{pmatrix}
 \B \eta \\[2pt]
  \B \zeta
  \end{pmatrix}
\in Y := \mathbb{R}^{d+1}\times \mathbb{R}^{\tilde{d}},
\quad
\B\eta=\left(\eta_0,\dots,\eta_d\right)^{\top},\quad
\B\zeta=\left(\zeta_1,\dots,\zeta_{\tilde d}\right)^{\top},
\end{equation*}
we define the norm
\begin{equation}\label{semnorm}
   \left \| \B y(\B \eta,\B \zeta)\right\|_{Y,0}^2
    = \B\eta^{\top} \left(M^{\top}M\right)^{-1} \B\eta
    + \B\zeta^{\top} G^{-1} \B\zeta,
\end{equation}
where $M$ and $G$ are the matrices defined in~\eqref{matM} and~\eqref{matG}.
The associated coupling bilinear form is
\begin{equation}\label{bil:form}
\Psi(q;\B\eta,\B\zeta)
= \sum_{j=0}^{d}\eta_j\,\mathcal{L}_j(q)
  + \sum_{k=1}^{\tilde d}\zeta_k\,\mathcal{V}_k(q),
  \quad q\in \mathbb{S}_2\left(S_d\right),\quad \B y(\B \eta,\B \zeta)= 
   \begin{pmatrix}
 \B \eta \\[2pt]
  \B \zeta
  \end{pmatrix}\in Y.
\end{equation}

\begin{lemma}\label{lem:dual-sup}
For any $q\in\mathbb{S}_2\left(S_d\right)$, the following identity holds
\begin{equation*}
    \sup_{\B y(\B\eta,\B\zeta)\neq \B 0}
    \frac{\Psi(q;\B\eta,\B\zeta)}
         {\left\|\B y(\B\eta,\B\zeta)\right\|_{Y,0}}
    =
    \left(
      \mathcal{L}(q)^{\top}\left(M^{\top}M\right)\mathcal{L}(q)
      + \mathcal{V}(q)^{\top}G\mathcal{V}(q)
    \right)^{1/2}. 
\end{equation*}
\end{lemma}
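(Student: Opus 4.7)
The plan is to recast the ratio as a weighted inner-product problem, so that the statement reduces to a Cauchy--Schwarz equality together with an explicit maximizer. First I would collect the data in block form: using $\B u(q)=\left(\mathcal{L}(q)^{\top},\mathcal{V}(q)^{\top}\right)^{\top}\in\mathbb{R}^{(d+1)+\tilde d}$ as already introduced in~\eqref{matDs}, and the matrix $D=\mathrm{diag}\left(M^{\top}M,\,G\right)$, the bilinear form~\eqref{bil:form} becomes simply
$$
\Psi(q;\B\eta,\B\zeta)=\B y(\B\eta,\B\zeta)^{\top}\B u(q),
$$
while the squared norm~\eqref{semnorm} rewrites as $\left\|\B y(\B\eta,\B\zeta)\right\|_{Y,0}^{2}=\B y(\B\eta,\B\zeta)^{\top}D^{-1}\B y(\B\eta,\B\zeta)$. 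Note that $D$ is symmetric positive definite: $G$ is SPD as a Gram matrix (see~\eqref{Gmat}), while $M^{\top}M$ is SPD because $M$ is assumed invertible, so $D^{\pm 1/2}$ are well defined.

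Second, I would apply the Cauchy--Schwarz inequality in the Euclidean inner product of $\mathbb{R}^{(d+1)+\tilde d}$ to the factorization
$$
\B y^{\top}\B u(q)=\left(D^{-1/2}\B y\right)^{\top}\left(D^{1/2}\B u(q)\right),
$$
which yields
$$
\left|\Psi(q;\B\eta,\B\zeta)\right|\leq \left\|\B y(\B\eta,\B\zeta)\right\|_{Y,0}\cdot\left(\B u(q)^{\top}D\,\B u(q)\right)^{1/2}.
$$
Unfolding the block structure of $D$, the right-hand factor is exactly
$\left(\mathcal{L}(q)^{\top}M^{\top}M\mathcal{L}(q)+\mathcal{V}(q)^{\top}G\mathcal{V}(q)\right)^{1/2}$, which gives the upper bound in the statement.

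Third, I would check sharpness by exhibiting an explicit maximizer. Setting $\B y_{\star}:=D\,\B u(q)$ gives on the one hand $\B y_{\star}^{\top}D^{-1}\B y_{\star}=\B u(q)^{\top}D\,\B u(q)$, and on the other hand $\B y_{\star}^{\top}\B u(q)=\B u(q)^{\top}D\,\B u(q)$. Hence the ratio evaluated at $\B y_{\star}$ equals $\left(\B u(q)^{\top}D\,\B u(q)\right)^{1/2}$, matching the upper bound. If $\B u(q)\neq\B 0$ then $\B y_{\star}\neq\B 0$ since $D$ is nonsingular, so the supremum is attained; in the degenerate case $\B u(q)=\B 0$ both sides are identically zero, and the identity holds trivially.

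There is no genuine obstacle here beyond choosing the correct weighted pairing; the difficulty is purely notational, in aligning the block partition of $D$ with the two components of $\B y$ and $\B u(q)$. Once the identification $\Psi(q;\cdot)=\left\langle \cdot,\B u(q)\right\rangle$ and $\|\cdot\|_{Y,0}^{2}=\left\langle \cdot,D^{-1}\cdot\right\rangle$ is made, the lemma is just the standard Riesz-type representation of a linear functional in a finite-dimensional inner-product space.
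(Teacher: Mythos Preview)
Your argument is correct and coincides with the paper's proof: both rewrite $\Psi(q;\cdot)=\B y^{\top}\B u(q)$ and $\|\B y\|_{Y,0}^{2}=\B y^{\top}D^{-1}\B y$, apply Cauchy--Schwarz (the paper phrases it in the $D^{-1}$-inner product, you use the equivalent factorization through $D^{\pm1/2}$ in the Euclidean inner product), and attain the supremum at $\B y_{\star}=D\,\B u(q)$. The degenerate case $\B u(q)=\B 0$ is treated identically.
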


\begin{proof}
If $\B u(q)=\B 0$, the result is trivial. Hence, in the following we assume that $\B u(q)\neq\B 0$.

For any $\B y(\B\eta,\B\zeta)= \begin{pmatrix}
 \B \eta \\[2pt]
  \B \zeta
  \end{pmatrix}
\in Y,$ by~\eqref{matDs} and~\eqref{semnorm}, it holds
\begin{equation}\label{eq:norm-bilinear}
   \left\|\B y(\B\eta,\B\zeta)\right\|_{Y,0}^2
    =
    \begin{pmatrix}
 \B \eta \\[2pt]
  \B \zeta
  \end{pmatrix}^{\top}
    D^{-1}
    \begin{pmatrix}
 \B \eta \\[2pt]
  \B \zeta
  \end{pmatrix}.
\end{equation}
Moreover, by~\eqref{matDs} and~\eqref{bil:form}, it results
\begin{equation}\label{eq:psi-inner}
    \Psi\left(q;\B\eta,\B\zeta\right)
    =
   \begin{pmatrix}
 \B \eta \\[2pt]
  \B \zeta
  \end{pmatrix}^{\top}\B u(q).
\end{equation}
Since $M^{\top}M$ and $G$ are symmetric positive definite, their block-diagonal
combination $D$ is also symmetric positive definite. Hence $D^{-1}$ exists and
induces the inner product
\begin{equation*}
    \left\langle \B y_1,\B y_2\right\rangle_{D^{-1}}
    := \B y_1^{\top} D^{-1} \B y_2,
    \quad
    \B y_1,\B y_2\in \mathbb{R}^{(d+1)+\tilde d}.
\end{equation*}
Then, by~\eqref{eq:norm-bilinear}, we have
\begin{equation}\label{dsdsaaaa}
   \left\|\B y(\B\eta,\B\zeta)\right\|_{Y,0}
    = \left\|\B y(\B\eta,\B\zeta)\right\|_{D^{-1}}
    = \sqrt{\begin{pmatrix}
 \B \eta \\[2pt]
  \B \zeta
  \end{pmatrix}^{\top}D^{-1}\begin{pmatrix}
 \B \eta \\[2pt]
  \B \zeta
  \end{pmatrix}}.
\end{equation}
Combining~\eqref{eq:psi-inner} and~\eqref{dsdsaaaa}, we have
\begin{equation}\label{eq:ratio}
\frac{\Psi\left(q;\B\eta,\B\zeta\right)}
     {\left\|\B y(\B\eta,\B\zeta)\right\|_{Y,0}}
= \frac{\B y(\B\eta,\B\zeta)^{\top}\B u(q)}
       {\left\|\B y(\B\eta,\B\zeta)\right\|_{D^{-1}}}
= \frac{\left\langle \B y(\B\eta,\B\zeta), D \B u(q)\right\rangle_{D^{-1}}}
       {\left\|\B y(\B\eta,\B\zeta)\right\|_{D^{-1}}}.
\end{equation}
Using the Cauchy--Schwarz inequality with respect to the inner product
$\langle\cdot,\cdot\rangle_{D^{-1}}$, and exploiting the symmetry of $D$, we obtain
\begin{equation}\label{eq:upper}
\frac{\left\langle \B y(\B\eta,\B\zeta), D\B u(q)\right\rangle_{D^{-1}}}
     {\left\|\B y(\B\eta,\B\zeta)\right\|_{D^{-1}}}
\le \left\|D\B u(q)\right\|_{D^{-1}}
= \sqrt{(D\B u(q))^{\top}D^{-1}(D\B u(q))}
= \sqrt{\B u(q)^{\top} D \B u(q)}.
\end{equation}
Combining~\eqref{eq:ratio} and~\eqref{eq:upper} gives
\begin{equation*}
    \frac{\Psi\left(q;\B\eta,\B\zeta\right)}
     {\left\|\B y(\B\eta,\B\zeta)\right\|_{Y,0}}\le \sqrt{\B u(q)^{\top} D \B u(q)}.
\end{equation*}
Equality is achieved by choosing 
\begin{equation*}
   \B y(\B\eta,\B\zeta)= D\B u(q).
\end{equation*}
Indeed, using~\eqref{eq:norm-bilinear} and~\eqref{eq:psi-inner}, and exploiting the symmetry of $D$, we obtain
\begin{eqnarray*}
 \Psi\left(q;D\B u(q)\right)
    &=& (D\B u(q))^{\top}\B u(q)
    = \B u(q)^{\top} D \B u(q),\\
    \left\| D\B u(q)\right\|_{Y,0}
    &=& \sqrt{ (D\B u(q))^{\top} D^{-1} (D\B u(q)) }
    = \sqrt{\B u(q)^{\top} D \B u(q)}.
\end{eqnarray*}
Then this proves that
\begin{equation*}
    \sup_{ \B y(\B\eta,\B\zeta)\neq \B 0}
    \frac{\Psi\left(q;\B\eta,\B\zeta\right)}
         {\left\| \B y(\B\eta,\B\zeta)\right\|_{Y,0}}
    =
    \sqrt{\B u(q)^{\top} D \B u(q)}
    =
    \left(
      \mathcal{L}(q)^{\top}\left(M^{\top}M\right)\mathcal{L}(q)
      + \mathcal{V}(q)^{\top}G\,\mathcal{V}(q)
    \right)^{1/2}.
\end{equation*}
\end{proof}

The following lemma shows that, once the quadratic polynomial $q$ is decomposed
into its interior and face components, the combined contribution of the face
and volume moments admits a block representation involving the matrices
$G$, $C$, $\tilde C$ and $M$.

\begin{lemma}\label{lem:block-form}
For any $\B\xi=\left(\xi_1,\dots,\xi_{\tilde d}\right)^{\top}\in\mathbb{R}^{\tilde d}$ and $\B\gamma=\left(\gamma_0,\dots,\gamma_d\right)^{\top}\in\mathbb{R}^{d+1}$, we consider
\begin{equation}\label{ausaas}
  q=v+w\in\mathbb{S}_2\left(S_d\right), \quad v=\sum_{\ell=1}^{\tilde d}\xi_\ell\rho_\ell\in\mathbb{V},\quad
w=\sum_{i=0}^{d}\gamma_i\psi_i\in\mathbb{W}.  
\end{equation}
The following identity holds
\begin{equation}\label{claim2}
  \mathcal{L}(q)^{\top}\left(M^{\top}M\right)   \mathcal{L}(q) + \mathcal{V}(q)^{\top} G \mathcal{V}(q)
= 
\begin{pmatrix}\B\xi\\ \B\gamma\end{pmatrix}^{\top}
\begin{pmatrix}K_{11}&K_{12}\\ K_{21}&K_{22}\end{pmatrix}
\begin{pmatrix}\B\xi\\ \B\gamma\end{pmatrix},
\end{equation}
where
\begin{eqnarray}\label{matKij}
    K_{11}&=& \tilde C^{\top}(M^{\top}M)\tilde C + G^3\in \mathbb{R}^{\tilde{d}\times \tilde{d}},\\ \notag
    K_{12}&=& \tilde C^{\top}(M^{\top}M)M + G^2 C\in \mathbb{R}^{\tilde{d}\times(d+1)},\\ \notag
    K_{21}&=& M^{\top}(M^{\top}M)\tilde C + C^{\top}G^2\in \mathbb{R}^{(d+1)\times\tilde{d}},\\ \notag
    K_{22}&=& M^{\top}(M^{\top}M)M + C^{\top}GC\in\mathbb{R}^{(d+1)\times(d+1)},
\end{eqnarray}
with 
\begin{equation*}
    G^2=GG, \quad G^3=GGG.
\end{equation*}
\end{lemma}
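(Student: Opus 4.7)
The plan is to reduce the identity to a direct block-matrix calculation by first expressing both vectors $\mathcal{L}(q)$ and $\mathcal{V}(q)$ as linear functions of the coefficients $(\B\xi,\B\gamma)$ via the four building blocks $G$, $C$, $\tilde C$, $M$, and then expanding the quadratic form on the left-hand side.

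First I would compute $\mathcal{L}(q)$ and $\mathcal{V}(q)$ componentwise using the decomposition \eqref{ausaas} and the linearity of the functionals. Since $\mathcal{L}_j(\rho_\ell)=\langle\rho_\ell,q_j\rangle_{\omega_j}=[\tilde C]_{j\ell}$ and $\mathcal{L}_j(\psi_i)=\langle\psi_i,q_j\rangle_{\omega_j}=[M]_{ji}$, the definition \eqref{funLeV} immediately gives
\begin{equation*}
\mathcal{L}(q)=\tilde C\,\B\xi+M\,\B\gamma\in\mathbb{R}^{d+1}.
\end{equation*}
Similarly, using $\mathcal{V}_k(\rho_\ell)=\langle\rho_\ell,\rho_k\rangle_\Omega=[G]_{k\ell}$ and $\mathcal{V}_k(\psi_i)=\langle\psi_i,\rho_k\rangle_\Omega=[C]_{ki}$, one obtains
\begin{equation*}
\mathcal{V}(q)=G\,\B\xi+C\,\B\gamma\in\mathbb{R}^{\tilde d}.
\end{equation*}
These two identities are the only structural ingredient needed; everything else is bilinear expansion.

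Next I would substitute these two expressions into
\begin{equation*}
\mathcal{L}(q)^\top (M^\top M)\mathcal{L}(q)+\mathcal{V}(q)^\top G\,\mathcal{V}(q),
\end{equation*}
and expand each quadratic form. The face term produces the four contributions $\B\xi^\top \tilde C^\top(M^\top M)\tilde C\B\xi$, $\B\xi^\top \tilde C^\top(M^\top M)M\B\gamma$, $\B\gamma^\top M^\top(M^\top M)\tilde C\B\xi$, and $\B\gamma^\top M^\top(M^\top M)M\B\gamma$. The interior term, using the symmetry of $G$ (so that $G^\top G=G^2$ and $G^\top G G=G^3$), produces $\B\xi^\top G^3\B\xi$, $\B\xi^\top G^2C\B\gamma$, $\B\gamma^\top C^\top G^2\B\xi$, and $\B\gamma^\top C^\top GC\B\gamma$.

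Finally, collecting the coefficients of $\B\xi^\top(\cdot)\B\xi$, $\B\xi^\top(\cdot)\B\gamma$, $\B\gamma^\top(\cdot)\B\xi$, and $\B\gamma^\top(\cdot)\B\gamma$, I read off the four blocks $K_{11},K_{12},K_{21},K_{22}$ defined in \eqref{matKij}, which is exactly the claimed identity \eqref{claim2}. There is no substantive obstacle here: the argument is purely algebraic, and the only thing that requires care is bookkeeping of the dimensions of $C$ and $\tilde C$ (so that the products $\tilde C^\top(M^\top M)M$ and $G^2C$ are well-defined rectangular blocks) together with the symmetry of $G$ used to rewrite $G^\top G$ as $G^2$ and to express the off-diagonal blocks in the stated form.
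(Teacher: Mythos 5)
Your proposal is correct and follows essentially the same route as the paper's proof: express $\mathcal{L}(q)=\tilde C\,\B\xi+M\B\gamma$ and $\mathcal{V}(q)=G\B\xi+C\B\gamma$ from the block definitions, then expand the two quadratic forms and collect terms using the symmetry of $G$. The explicit bookkeeping of the cross terms and of $G^\top G=G^2$, $G^\top GG=G^3$ is exactly the content of the paper's argument.
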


\begin{proof}
Using~\eqref{ausaas} and the linearity of the moment functionals 
$\mathcal{L}_j$ and $\mathcal{V}_k$, we obtain
\begin{eqnarray*}
\mathcal{L}_j(q)&=&\sum_{\ell=1}^{\tilde d}\xi_\ell\mathcal{L}_j\left(\rho_\ell\right)+\sum_{i=0}^{d}\gamma_i \mathcal{L}_j\left(\psi_i\right), \quad j=0,\dots,d,\\
    \mathcal{V}_k(q)&=&\sum_{\ell=1}^{\tilde d}\xi_\ell\mathcal{V}_k\left(\rho_\ell\right)+\sum_{i=0}^{d}\gamma_i\mathcal{V}_k\left(\psi_i\right), \quad k=1,\dots,\tilde{d}.   
\end{eqnarray*}
Then, using the definitions of the matrices in 
\eqref{matG}, \eqref{matC}, \eqref{mattildeC}, and~\eqref{matM}, we obtain
\begin{equation*}
\mathcal{L}(q)=\tilde C\B\xi+M\B\gamma,\quad
    \mathcal{V}(q)=G\B\xi+C\B\gamma,  
\end{equation*}
where $\mathcal{L}$ and $\mathcal{V}$ are defined in~\eqref{funLeV}.
Hence,
\begin{eqnarray*}
    \mathcal{L}(q)^{\top}\left(M^{\top}M\right)   \mathcal{L}(q)&=& \left(\tilde C\B\xi+M\B\gamma\right)^{\top}\left(M^{\top}M\right)\left(\tilde C\B\xi+M\B\gamma\right), \\
     \mathcal{V}(q)^{\top}G \mathcal{V}(q)&=&(G\B\xi+C\B\gamma)^{\top}G\,(G\B\xi+C\B\gamma).
\end{eqnarray*}
By expanding both expressions, collecting the coefficients of $\B\xi$ and
$\B\gamma$, and exploiting the symmetry of $G$, we obtain the block matrix in~\eqref{matKij} with entries $K_{11}$, $K_{12}$, $K_{21}$, and $K_{22}$.
\end{proof}

\begin{remark}
We observe that the matrices $K_{11}$ and $K_{22}$ are symmetric, and that
$K_{21}=K_{12}^\top$. Moreover, since both $M^{\top}M$ and $G$ are symmetric
positive definite, the matrix
\begin{equation*}
    K_{22} = M^{\top}(M^{\top}M)M + C^{\top} G C
\end{equation*}
is symmetric positive definite as well. Indeed, for any $\B x \in \mathbb{R}^{d+1}$, we get
\begin{eqnarray*}
    \B x^{\top} K_{22} \B x
    &=& 
    \B x^{\top}\left(M^{\top}(M^{\top}M)M + C^{\top}GC\right)\B x\\[4pt]
    &=& 
    (M\B x)^{\top}(M^{\top}M)(M\B x)
    + (C\B x)^{\top}G(C\B x).
\end{eqnarray*}
If $\B x \neq \B 0$, then $M\B x \neq \B 0$ because $M$ is invertible, and hence
\begin{equation*}
    (M\B x)^{\top}\left(M^{\top}M\right)(M\B x) > 0,
\end{equation*}
while the second term is nonnegative since $G$ is symmetric positive definite.  
Therefore, 
\begin{equation*}
\B x^{\top}K_{22}\B x > 0, \quad \forall \B x \neq \B 0, 
\end{equation*} 
which proves that $K_{22}$ is symmetric positive definite.
\end{remark}

For the forthcoming results, we set
\begin{equation}\label{matS}
S = K_{11} - K_{12} K_{22}^{-1} K_{21},
\end{equation}
and we define
\begin{equation}\label{matThat}
    \hat{S} = G^{-1/2} S G^{-1/2},
\end{equation}
where $G$ is the Gram matrix defined in~\eqref{matG}. Since $K_{11}$ and $K_{22}$ are symmetric and $K_{21}=K_{12}^{\top}$, 
the Schur complement $S$ is also symmetric.

\begin{lemma}\label{lem:eliminate}
For every $\B\xi \in \mathbb{R}^{\tilde{d}}$, the following identity holds
\begin{equation*}
\min_{\B\gamma\in\mathbb{R}^{d+1}} 
\left(
\B\xi^{\top}K_{11}\B\xi
+2\B\xi^{\top}K_{12}\B\gamma
+\B\gamma^{\top}K_{22}\B\gamma
\right)
=
\B\xi^{\top}S\B\xi,
\end{equation*}
where $S$ is defined in~\eqref{matS}. The minimizer is unique and is given by  
\begin{equation*}
    \bar{\B\gamma}=-K_{22}^{-1}K_{21}\B\xi.
\end{equation*} 
\end{lemma}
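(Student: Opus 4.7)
The plan is to recognize this as a standard completion-of-squares (equivalently, unconstrained quadratic minimization) argument, enabled by the fact that $K_{22}$ is symmetric positive definite, as already established in the Remark immediately after Lemma~\ref{lem:block-form}. Since $K_{22}$ is symmetric positive definite, its inverse $K_{22}^{-1}$ exists and the map
\begin{equation*}
\B\gamma\mapsto J(\B\gamma;\B\xi) := \B\xi^{\top}K_{11}\B\xi+2\B\xi^{\top}K_{12}\B\gamma+\B\gamma^{\top}K_{22}\B\gamma
\end{equation*}
is a strictly convex quadratic in $\B\gamma$, so it attains its infimum at a unique stationary point.

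First, I would compute the gradient of $J(\,\cdot\,;\B\xi)$ with respect to $\B\gamma$. Using $K_{21}=K_{12}^{\top}$ and the symmetry of $K_{22}$, setting $\nabla_{\B\gamma}J=\B 0$ gives the normal equation
\begin{equation*}
K_{22}\B\gamma+K_{21}\B\xi=\B 0,
\end{equation*}
whose unique solution is $\bar{\B\gamma}=-K_{22}^{-1}K_{21}\B\xi$. Strict convexity (i.e.\ positive definiteness of the Hessian $2K_{22}$) guarantees that $\bar{\B\gamma}$ is the unique global minimizer.

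Next, I would substitute $\bar{\B\gamma}$ back into $J$ and simplify. Using $K_{21}^{\top}=K_{12}$ and $K_{22}^{-\top}=K_{22}^{-1}$, the three terms of $J\left(\bar{\B\gamma};\B\xi\right)$ collapse to
\begin{equation*}
\B\xi^{\top}K_{11}\B\xi-2\B\xi^{\top}K_{12}K_{22}^{-1}K_{21}\B\xi+\B\xi^{\top}K_{12}K_{22}^{-1}K_{21}\B\xi=\B\xi^{\top}\left(K_{11}-K_{12}K_{22}^{-1}K_{21}\right)\B\xi,
\end{equation*}
which is precisely $\B\xi^{\top}S\B\xi$ by the definition of $S$ in~\eqref{matS}. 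Alternatively, one can obtain the same result in one step by performing the block completion of squares
\begin{equation*}
J(\B\gamma;\B\xi)=\left(\B\gamma-\bar{\B\gamma}\right)^{\top}K_{22}\left(\B\gamma-\bar{\B\gamma}\right)+\B\xi^{\top}S\B\xi,
\end{equation*}
which makes both the minimizer and the minimum value simultaneously manifest and also provides an alternative verification that the minimum is attained only at $\bar{\B\gamma}$, since $K_{22}$ is positive definite.

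There is essentially no obstacle in this proof: the only nontrivial fact, namely the positive definiteness of $K_{22}$, has already been verified right after Lemma~\ref{lem:block-form}. The main care point is purely notational, namely keeping track of the identification $K_{21}=K_{12}^{\top}$ (which follows from the explicit expressions in~\eqref{matKij} together with the symmetry of $G$ and of $M^{\top}M$) so that the cross terms in the expansion combine correctly to produce the Schur complement $S$.
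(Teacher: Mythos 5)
Your proof is correct and follows essentially the same route as the paper's: using the positive definiteness of $K_{22}$ to establish strict convexity, solving the stationarity condition $K_{22}\B\gamma=-K_{21}\B\xi$ for the unique minimizer, and substituting back to recover $\B\xi^{\top}S\B\xi$. The completion-of-squares identity you mention is a pleasant bonus but not a different argument in substance.
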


\begin{proof}
For any fixed $\B\xi\in\mathbb{R}^{\tilde{d}}$, we define
\begin{equation*}
F_{\B \xi}(\B\gamma)
=\B\xi^{\top}K_{11}\B\xi
+2\B\xi^{\top}K_{12}\B\gamma
+\B\gamma^{\top}K_{22}\B\gamma, \quad \B \gamma\in\mathbb{R}^{d+1}.
\end{equation*}
Since $K_{22}$ is symmetric positive definite, $F_{\B\xi}$ is strictly convex in $\B\gamma$.
Differentiating with respect to $\B\gamma$ gives
\begin{equation*}
\nabla_{\B\gamma}F_{\B \xi}(\B\gamma)
=2\left(K_{12}^{\top}\B\xi+K_{22}\B\gamma\right).
\end{equation*}
Since $K_{12}^{\top}=K_{21}$, the unique stationary point $\bar{\B\gamma}$ satisfies
\begin{equation*}
    K_{22}\B\gamma=-K_{21}\B\xi,
\end{equation*}
hence, since $K_{22}$ is invertible,
\begin{equation*}
\bar{\B\gamma}=-K_{22}^{-1}K_{21}\B\xi.
\end{equation*}
Substituting $\bar{\B\gamma}$ into $F_{\B\xi}$ and using the symmetry of $K_{22}$, we obtain
\begin{equation*}
F_{\B\xi}(\bar{\B\gamma})
=\B\xi^{\top}\left(K_{11}-K_{12}K_{22}^{-1}K_{21}\right)\B\xi=\B\xi^{\top}S\B\xi,
\end{equation*}
which proves the claim.
\end{proof}

The next lemma relates the constrained minimization of the quadratic form
associated with $S$ to the smallest eigenvalue of the normalized matrix
$\hat{S}$ defined in~\eqref{matThat}.

\begin{lemma}\label{lem:rayleigh}
The following identity holds
\begin{equation*}
\inf_{\B\xi^{\top}G\B\xi=1}
\B\xi^{\top}S\B\xi
=
\sigma_{\min}\left(
\hat{S}
\right),
\end{equation*}
where $\sigma_{\min}\left(\hat{S}\right)$ denotes the smallest eigenvalue of $\hat{S}$.
\end{lemma}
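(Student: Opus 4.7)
The plan is to reduce the generalized Rayleigh quotient associated with the pair $(S,G)$ to a standard Rayleigh quotient on the Euclidean unit sphere by means of the symmetric square root of $G$. Since $G$ is a Gram matrix with respect to $\langle\cdot,\cdot\rangle_\Omega$, it is symmetric positive definite (this was already observed in~\eqref{Gmat}); hence both $G^{1/2}$ and $G^{-1/2}$ are well-defined, symmetric, and invertible.

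First I would introduce the change of variables $\B\eta = G^{1/2}\B\xi$, equivalently $\B\xi = G^{-1/2}\B\eta$. This is a bijection of $\mathbb{R}^{\tilde d}$ onto itself, so the constraint $\B\xi^{\top}G\B\xi = 1$ becomes exactly $\B\eta^{\top}\B\eta = 1$, i.e.\ the unit Euclidean sphere. Substituting into the objective and using the symmetry of $G^{-1/2}$ together with the definition~\eqref{matThat} of $\hat S$ gives
\begin{equation*}
\B\xi^{\top} S \B\xi
= \B\eta^{\top} G^{-1/2} S G^{-1/2} \B\eta
= \B\eta^{\top} \hat S \B\eta.
\end{equation*}
Therefore the constrained infimum over $\B\xi$ equals the infimum of $\B\eta^{\top}\hat S\B\eta$ over the unit sphere.

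Next I would observe that $\hat S$ is symmetric: $S$ is symmetric (as recorded just after its definition in~\eqref{matS}), and $G^{-1/2}$ is symmetric, so $\hat S^{\top} = \hat S$. By the classical Courant--Fischer (or Rayleigh--Ritz) characterization applied to the symmetric matrix $\hat S$,
\begin{equation*}
\inf_{\|\B\eta\|_2=1} \B\eta^{\top}\hat S\B\eta = \sigma_{\min}(\hat S),
\end{equation*}
with the infimum attained at a unit eigenvector of $\hat S$ corresponding to $\sigma_{\min}(\hat S)$. Combining this with the change of variables yields the stated identity.

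There is essentially no obstacle: the argument is a routine congruence reduction of a generalized eigenvalue problem to a standard one, and the only points that require attention are the existence and symmetry of $G^{1/2}$ (secured by positive definiteness of $G$) and the symmetry of $S$ (secured by the symmetry of $K_{11}$, $K_{22}$, and the identity $K_{21}=K_{12}^{\top}$, already noted in the text). If one wished to make the minimizer explicit, the optimal $\bar{\B\xi}$ is $G^{-1/2}$ applied to any unit eigenvector of $\hat S$ associated with $\sigma_{\min}(\hat S)$, which is equivalent to solving the generalized eigenvalue problem $S\B\xi = \sigma_{\min}(\hat S)\, G\B\xi$.
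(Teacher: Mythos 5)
Your proposal is correct and follows essentially the same route as the paper: the substitution $\B\eta=G^{1/2}\B\xi$ turns the constraint $\B\xi^{\top}G\B\xi=1$ into $\|\B\eta\|_2=1$ and the objective into $\B\eta^{\top}\hat S\B\eta$, after which the Rayleigh--Ritz characterization for the symmetric matrix $\hat S$ yields $\sigma_{\min}(\hat S)$. The additional remarks on the symmetry of $S$ and the explicit minimizer are consistent with the paper and do not change the argument.
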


\begin{proof}
 Let $\B\eta = G^{1/2}\B\xi$. Then
\begin{equation*}
\|\B\eta\|_2^2=\B\eta^{\top}\B\eta = \B\xi^{\top}G\B\xi,  
\end{equation*}
 so that the normalization $\B\xi^{\top}G\B\xi=1$ is equivalent to $\|\B\eta\|_2=1$.
Moreover,
\begin{equation*}
\B\xi^{\top}S\B\xi=\left(G^{-1/2}\B\eta\right)^{\top} S \left(G^{-1/2}\B\eta\right)= \B\eta^{\top}\left(G^{-1/2} S G^{-1/2}\right)\B\eta.
\end{equation*}
Therefore,
\begin{equation}\label{aus1c}
\inf_{\B\xi^{\top}G\B\xi=1}\B\xi^{\top}S\B\xi
=
\inf_{\|\B\eta\|_2=1}\B\eta^{\top}\left(G^{-1/2} S G^{-1/2}\right)\B\eta.
\end{equation}
Since $G^{-1/2} S G^{-1/2}$ is symmetric, the minimum of its Rayleigh quotient
over the Euclidean unit sphere coincides with its smallest eigenvalue, that is,
\begin{equation}\label{aus1c1}
\inf_{\|\B\eta\|_2=1}\B\eta^{\top}\left(G^{-1/2} S G^{-1/2}\right)\B\eta=\sigma_{\min}\left(G^{-1/2} S G^{-1/2}\right)=\sigma_{\min}\left(\hat{S}\right).
\end{equation}
Combining~\eqref{aus1c} and~\eqref{aus1c1} yields
\begin{equation*}
\inf_{\B\xi^{\top}G\B\xi=1}\B\xi^{\top}S\B\xi
=
\sigma_{\min}\left(G^{-1/2} S G^{-1/2}\right)=\sigma_{\min}\left(\hat{S}\right).
\end{equation*}
\end{proof}

Finally, in the next theorem we prove the inf--sup relation that links the
stability constant of the formulation to the square root of the smallest
eigenvalue of the reduced moment matrix $\hat{S}$.

\begin{theorem}\label{th22p}
The following inf-sup relation holds
\begin{equation*}
    \beta
:=\inf\limits_{\substack{v \in \mathbb{V} \\ \|v\|_{\Omega} = 1}}\inf_{w\in\mathbb{W}} \sup\limits_{\B y(\B\eta,\B\zeta)\neq \B 0}
\frac{\Psi(v+w;\B\eta,\B\zeta)}
     {\left\|\B y(\B\eta,\B\zeta)\right\|_{Y,0}}
=\sqrt{\sigma_{\min}\left(\hat{S}\right)}, 
\end{equation*}
where $\hat{S}$ is the matrix defined in~\eqref{matThat}.   
\end{theorem}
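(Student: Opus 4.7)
The plan is to chain together the four preparatory lemmas in the natural order: evaluate the innermost supremum in closed form, rewrite the result as a block quadratic form in the coordinates $(\boldsymbol{\xi},\boldsymbol{\gamma})$ of $q=v+w$, eliminate $\boldsymbol{\gamma}$ by the Schur complement, and finally recognize the remaining constrained minimization as a generalized Rayleigh quotient.

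More concretely, I would first fix $q=v+w\in\mathbb{S}_2(S_d)$ with $v\in\mathbb{V}$ and $w\in\mathbb{W}$ and apply Lemma~\ref{lem:dual-sup} to obtain
\begin{equation*}
\sup_{\mathbf{y}(\boldsymbol{\eta},\boldsymbol{\zeta})\neq\mathbf{0}}
\frac{\Psi(q;\boldsymbol{\eta},\boldsymbol{\zeta})}{\|\mathbf{y}(\boldsymbol{\eta},\boldsymbol{\zeta})\|_{Y,0}}
=\Bigl(\mathcal{L}(q)^{\top}(M^{\top}M)\mathcal{L}(q)+\mathcal{V}(q)^{\top}G\mathcal{V}(q)\Bigr)^{1/2}.
\end{equation*}
Writing $v=\sum_{\ell=1}^{\tilde d}\xi_\ell\rho_\ell$ and $w=\sum_{i=0}^{d}\gamma_i\psi_i$, Lemma~\ref{lem:block-form} rewrites the squared right-hand side as the block quadratic form with the kernel $\begin{pmatrix}K_{11}&K_{12}\\ K_{21}&K_{22}\end{pmatrix}$ in the variables $(\boldsymbol{\xi},\boldsymbol{\gamma})$.

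Next, since minimization over $w\in\mathbb{W}$ corresponds to minimization over $\boldsymbol{\gamma}\in\mathbb{R}^{d+1}$ and the supremum above is the square root of the block quadratic form, the monotonicity of the square root lets me bring the inner $\inf_{w\in\mathbb{W}}$ inside the square root. Applying Lemma~\ref{lem:eliminate}, I obtain
\begin{equation*}
\inf_{w\in\mathbb{W}}\sup_{\mathbf{y}\neq\mathbf{0}}
\frac{\Psi(v+w;\boldsymbol{\eta},\boldsymbol{\zeta})}{\|\mathbf{y}\|_{Y,0}}
=\sqrt{\boldsymbol{\xi}^{\top}S\boldsymbol{\xi}},
\end{equation*}
with $S=K_{11}-K_{12}K_{22}^{-1}K_{21}$ as in~\eqref{matS}. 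Finally, I must translate the outer normalization $\|v\|_\Omega=1$: because $G$ is the Gram matrix of $\{\rho_\ell\}$ with respect to $\langle\cdot,\cdot\rangle_\Omega$, one has
\begin{equation*}
\|v\|_\Omega^{2}=\Bigl\langle\sum_\ell\xi_\ell\rho_\ell,\sum_k\xi_k\rho_k\Bigr\rangle_\Omega=\boldsymbol{\xi}^{\top}G\boldsymbol{\xi},
\end{equation*}
so $\|v\|_\Omega=1$ is exactly the constraint $\boldsymbol{\xi}^{\top}G\boldsymbol{\xi}=1$. Invoking Lemma~\ref{lem:rayleigh} then yields
\begin{equation*}
\beta^{2}=\inf_{\boldsymbol{\xi}^{\top}G\boldsymbol{\xi}=1}\boldsymbol{\xi}^{\top}S\boldsymbol{\xi}=\sigma_{\min}(\hat{S}),
\end{equation*}
and taking square roots concludes the proof.

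I do not anticipate a deep technical obstacle, since all four lemmas are already in place; the most delicate point is purely formal, namely the interchange of the inner $\inf_{w}$ with the outer square root, which is licit because $t\mapsto\sqrt{t}$ is monotone increasing on $[0,\infty)$ and the quadratic form inside is nonnegative (as $K_{22}$ is symmetric positive definite and the full block form equals $\mathcal L(q)^\top(M^\top M)\mathcal L(q)+\mathcal V(q)^\top G\,\mathcal V(q)\ge 0$). One should also briefly remark that the outer infimum over the unit sphere $\{\boldsymbol{\xi}^\top G\boldsymbol{\xi}=1\}$ is attained, so the infimum in $\beta$ is in fact a minimum, ensuring the identity rather than a mere inequality.
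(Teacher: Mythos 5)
Your proposal is correct and follows essentially the same route as the paper's own proof: Lemma~\ref{lem:dual-sup} for the closed-form supremum, Lemma~\ref{lem:block-form} for the block quadratic form, Lemma~\ref{lem:eliminate} for the elimination of $\B\gamma$, the identification $\|v\|_{\Omega}^2=\B\xi^{\top}G\B\xi$, and Lemma~\ref{lem:rayleigh} to conclude. Your extra remarks on the monotonicity of the square root and the attainment of the infimum are fine refinements of steps the paper carries out implicitly.
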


\begin{proof}
For any $\B\xi=\left(\xi_1,\dots,\xi_{\tilde d}\right)^{\top}\in\mathbb{R}^{\tilde d}$ and $\B\gamma=\left(\gamma_0,\dots,\gamma_d\right)^{\top}\in\mathbb{R}^{d+1}$, we consider
\begin{equation*}
  q=v+w\in\mathbb{S}_2\left(S_d\right), \quad v=\sum_{\ell=1}^{\tilde d}\xi_\ell\rho_\ell\in\mathbb{V},\quad
w=\sum_{i=0}^{d}\gamma_i\psi_i\in\mathbb{W}.
\end{equation*}
By combining Lemma~\ref{lem:dual-sup} and Lemma~\ref{lem:block-form}, we get
\begin{eqnarray*}
     \sup_{\B y(\B\eta,\B\zeta)\neq \B 0}
    \frac{\Psi(v+w;\B\eta,\B\zeta)}
         {\left\|\B y(\B\eta,\B\zeta)\right\|_{Y,0}}
    &=&
    \left(
     \begin{pmatrix}\B\xi\\ \B\gamma\end{pmatrix}^{\top}
\begin{pmatrix}K_{11}&K_{12}\\ K_{21}&K_{22}\end{pmatrix}
\begin{pmatrix}\B\xi\\ \B\gamma\end{pmatrix}
    \right)^{1/2}\\ &=&\left(\B\xi^{\top}K_{11}\B\xi
+2\B\xi^{\top}K_{12}\B\gamma
+\B\gamma^{\top}K_{22}\B\gamma\right)^{1/2}.
\end{eqnarray*}
By Lemma~\ref{lem:eliminate}, we have 
\begin{equation*}
\min_{\B\gamma\in\mathbb{R}^{d+1}} 
\left(
\B\xi^{\top}K_{11}\B\xi
+2\B\xi^{\top}K_{12}\B\gamma
+\B\gamma^{\top}K_{22}\B\gamma
\right)
=
\B\xi^{\top}S\B\xi,
\end{equation*}
where $S$ is the matrix defined in \eqref{matS} and the only minimizer is
\begin{equation*}
\bar{\B\gamma}=-K_{22}^{-1}K_{21}\B\xi.
\end{equation*}
Then, we obtain
\begin{eqnarray*}
    \inf_{w\in\mathbb{W}}
\sup_{\B y(\B\eta,\B\zeta)\neq \B 0}
\frac{\Psi(v+w;\B\eta,\B\zeta)}
     {\left\|\B y(\B\eta,\B\zeta)\right\|_{Y,0}}
=\min_{\B\gamma\in\mathbb{R}^{d+1}} 
\left(
\B\xi^{\top}K_{11}\B\xi
+2\B\xi^{\top}K_{12}\B\gamma
+\B\gamma^{\top}K_{22}\B\gamma
\right)^{1/2}=\ 
\sqrt{\B\xi^{\top}S\B\xi}.
\end{eqnarray*}
Enforcing the normalization condition
\begin{equation*}
1=\|v\|_{\Omega}^2=\left\langle v,v\right\rangle_{\Omega}= \sum_{k=1}^{\tilde d}\sum_{\ell=1}^{\tilde d} \xi_\ell \left\langle \rho_\ell,\rho_k\right\rangle_{\Omega} \xi_k= \B\xi^{\top}G\B\xi,   
\end{equation*}
we have
\begin{equation*}
\beta
=\inf\limits_{\substack{v \in \mathbb{V} \\ \|v\|_{\Omega} = 1}}\inf_{w\in\mathbb{W}}
\sup_{\B y(\B\eta,\B\zeta)\neq \B 0}
\frac{\Psi(v+w;\B\eta,\B\zeta)}
     {\left\|\B y(\B\eta,\B\zeta)\right\|_{Y,0}}
=\inf_{\B\xi^{\top}G\B\xi=1}\ 
\sqrt{\B\xi^{\top}S\B\xi}.
\end{equation*}
By Lemma~\ref{lem:rayleigh}, we get
\begin{equation*}
\inf_{\B\xi^{\top}G\B\xi=1}\B\xi^{\top}S\B\xi
=\sigma_{\min}\left(\hat{S}\right).
\end{equation*}
Then, we conclude that
\begin{equation}\label{betaShat}
\beta=\sqrt{\sigma_{\min}\left(\hat{S}\right)}.
\end{equation}
In particular, $\beta>0$ if and only if $\hat{S}$ is positive definite. 
\end{proof}

Eigenvalue-based characterizations of stability play a fundamental role in
numerical linear algebra, especially in the analysis of structured operators and
Schur-type reductions~\cite{calvetti2002restarted,alkilayh2023method,reichel1992eigenvalues}.
In the present setting this viewpoint is particularly appropriate, since the
inf--sup constant can be expressed as the square root of the smallest eigenvalue
of a reduced moment matrix. Related spectral techniques have been developed in the context of 
Toeplitz systems~\cite{lemmerling2000fast,noschese2009structured}, while spectral
approximations of moment quantities based on Lanczos bidiagonalization have been
investigated in~\cite{calvetti1999estimation}. The moment matrices considered in
this work, however, do not have a Toeplitz structure, and therefore the
connection with these contributions is conceptual rather than structural.
The works cited above illustrate how specific matrix structures constrain the
spectral behaviour of an operator, and how moment-type quantities can be analyzed
through spectral tools such as Lanczos-based quadrature formulas. Although our
matrices exhibit a different internal organization, these approaches share with
ours the overarching idea that spectral structure is the key to understanding stability and conditioning.

\begin{remark}
The decomposition $\mathbb{S}_2\left(S_d\right)=\mathbb{V}\oplus \mathbb{W}$ introduced in this
section is not unique. In principle, other choices of the subspaces $\mathbb{V}$
and $\mathbb{W}$ could also be considered. Such alternatives might simplify the
Gram structure $\left(G,C, \tilde{C},M\right)$ and potentially improve the conditioning of the
reduced matrix $\hat{S}$. A systematic analysis of optimal or minimax
decompositions, however, lies beyond the scope of the present work and is left
for future investigation.
\end{remark}

\begin{remark} 
The stability analysis in this section shows that the discrete inf--sup constant is determined by the square root of the smallest eigenvalue of the reduced operator $\hat{S}$. In certain highly anisotropic configurations of the density $\Omega$, this operator may become numerically close to singular, with a spectrum displaying a pronounced flattening. Such behaviour is classical for structured moment matrices and is closely related to phenomena observed in inverse problems and ill-conditioned linear systems, see e.g.~\cite{calvetti1999estimation,noschese2009structured}. In these contexts, mild spectral stabilization strategies of the form 
\begin{equation*}
    S_\alpha = S + \alpha I, \quad \alpha>0,
\end{equation*}
are routinely employed to counteract extreme compression of eigenvalues while preserving the essential structure of the operator. Although the development of such regularized variants lies outside the scope of the present work, the connection suggests a natural direction for future extensions and places the present framework within the broader landscape of numerical linear algebra techniques for structured operators. 
\end{remark}

\begin{proposition}\label{prop:unisolvence-stability}
The following statements hold:
\begin{enumerate}
    \item[$(i)$] The set $$\Sigma = \left\{ \mathcal{I}_j, \mathcal{L}_j, \mathcal{V}_k \, :\, j=0,\dots,d,\ k=1,\dots,\tilde{d}\right\}$$ is unisolvent on $\mathbb{P}_2\left(S_d\right)$ if and only if the matrices $A$ and $T$, defined in~\eqref{matA} and~\eqref{matT}, respectively, are both invertible.

    \item[$(ii)$] The associated discrete system satisfies the inf-sup condition
    \begin{equation*}
   \inf\limits_{\substack{v \in \mathbb{V} \\ \|v\|_{\Omega} = 1}}\inf_{w\in\mathbb{W}}
\sup_{\B y(\B\eta,\B\zeta)\neq \B 0}
\frac{\Psi(v+w;\B\eta,\B\zeta)}
     {\left\|\B y(\B\eta,\B\zeta)\right\|_{Y,0}}
    >0, 
    \end{equation*}
   if and only if $\hat{S}$ is symmetric positive definite.

    \item[$(iii)$] In this case, the inf-sup constant admits the explicit expression
    \begin{equation*}
    \beta
    =
    \inf\limits_{\substack{v \in \mathbb{V} \\ \|v\|_{\Omega} = 1}}\inf_{w\in\mathbb{W}}
\sup_{\B y(\B\eta,\B\zeta)\neq \B 0}
\frac{\Psi(v+w;\B\eta,\B\zeta)}
     {\left\|\B y(\B\eta,\B\zeta)\right\|_{Y,0}}
    = \sqrt{\sigma_{\min}\left(\hat{S}\right)} > 0.
    \end{equation*}
\end{enumerate}
In particular, unisolvence of $\Sigma$ is characterized by the invertibility of $A$ and $T$, whereas quantitative inf-sup stability is characterized by the positive definiteness of $\hat{S}$.
\end{proposition}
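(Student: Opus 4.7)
The plan is to view Proposition~\ref{prop:unisolvence-stability} as a consolidation of the two main results of the section --- namely Theorem~\ref{thm:CNS-nonsym} (the algebraic unisolvence criterion) and Theorem~\ref{th22p} (the sharp inf--sup identity) --- recast jointly as a three-part dictionary between algebraic invertibility, spectral positivity, and discrete stability. Accordingly, no new estimate is required: the proof reduces to a careful restatement together with a short spectral observation.

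For part $(i)$, I would simply invoke Theorem~\ref{thm:CNS-nonsym}, whose statement coincides verbatim with the claim: unisolvence of $\Sigma$ on $\mathbb{P}_2\left(S_d\right)$ is equivalent to the joint invertibility of $A$ and $T$. For part $(iii)$, the identity $\beta=\sqrt{\sigma_{\min}\left(\hat{S}\right)}$ has already been derived in Theorem~\ref{th22p} via Lemmas~\ref{lem:dual-sup}--\ref{lem:rayleigh}, so I would cite that theorem directly. The only mild subtlety is ensuring that the square root is real, which requires $\sigma_{\min}\left(\hat{S}\right)\ge 0$. This follows because the quadratic form in~\eqref{claim2} equals $\mathcal{L}(q)^{\top}\left(M^{\top}M\right)\mathcal{L}(q)+\mathcal{V}(q)^{\top}G\,\mathcal{V}(q)$, which is manifestly nonnegative; hence the $2\times 2$ block matrix with entries $K_{ij}$ is positive semidefinite, its Schur complement $S$ is positive semidefinite, and so is $\hat{S}=G^{-1/2}SG^{-1/2}$ by Sylvester's law of inertia.

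Part $(ii)$ is then an immediate consequence of $(iii)$. The matrix $\hat{S}$ is symmetric --- $S$ being symmetric as noted after~\eqref{matThat}, and $G^{-1/2}$ being symmetric --- so $\sigma_{\min}\left(\hat{S}\right)$ is a real nonnegative number. Consequently $\beta>0$ if and only if $\sigma_{\min}\left(\hat{S}\right)>0$, which in turn is equivalent to $\hat{S}$ being positive definite. This closes the equivalence in $(ii)$ and ties the three statements together.

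The main obstacle, such as it is, lies only in making the semidefiniteness argument for $\hat{S}$ fully rigorous, since this is what allows $(ii)$ and $(iii)$ to be aligned and the square root in $(iii)$ to be meaningfully interpreted. As sketched above, this reduces to a one-line application of Lemma~\ref{lem:block-form} followed by standard Schur-complement monotonicity, so the technical burden is minimal and the remainder of the proof is an assembly step.
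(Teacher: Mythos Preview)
Your proposal is correct and matches the paper's own proof essentially verbatim: the paper also derives $(i)$ by citing Theorem~\ref{thm:CNS-nonsym} and derives $(ii)$ and $(iii)$ by citing Theorem~\ref{th22p} together with the observation that $\beta>0$ if and only if $\hat S$ is symmetric positive definite. Your additional justification that $\hat S$ is positive semidefinite (so the square root is real) is a welcome bit of extra care that the paper leaves implicit.
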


\begin{proof}
The first claim follows from Theorem~\ref{thm:CNS-nonsym}. 

By Theorem~\ref{th22p}, the inf--sup constant satisfies
\begin{equation*}
\beta
=
\inf\limits_{\substack{v \in \mathbb{V} \\ \|v\|_{\Omega} = 1}}\inf_{w\in\mathbb{W}}
\sup_{\B y(\B\eta,\B\zeta)\neq \B 0}
\frac{\Psi(v+w;\B\eta,\B\zeta)}
     {\left\|\B y(\B\eta,\B\zeta)\right\|_{Y,0}}
= \sqrt{\sigma_{\min}\left(\hat{S}\right)},
\end{equation*}
where $\hat{S}$ is defined in~\eqref{matThat}. In particular, $\beta>0$ if and only if $\hat{S}$ is symmetric positive definite. This proves $(ii)$ and $(iii)$.
\end{proof}

\begin{remark}
By Theorem~\ref{thm:CNS-nonsym}, the moment system $\Sigma$ is unisolvent on
$\mathbb{P}_2\left(S_d\right)$ if and only if both $A$ and $T$ are invertible (equivalently, $A$ and $H$ are invertible; see Corollary~\ref{cor1}). In contrast,
the quantitative inf--sup stability of the method does not depend
directly on $A$ and $T$, but instead on the symmetric matrix $\hat{S}$
introduced in~\eqref{matThat}. In fact, Theorem~\ref{th22p} yields
\begin{equation*}
\beta
=
\inf\limits_{\substack{v \in \mathbb{V} \\ \|v\|_{\Omega}=1}}
\inf_{w\in\mathbb{W}}
\sup_{\B y(\B\eta,\B\zeta)\neq \B 0}
\frac{\Psi(v+w;\B\eta,\B\zeta)}
     {\left\|\B y(\B\eta,\B\zeta)\right\|_{Y,0}}
=
\sqrt{\sigma_{\min}\left(\hat{S}\right)}.
\end{equation*}
Moreover, since
\begin{equation*}
\hat{S}=G^{-1/2} S G^{-1/2},
\end{equation*}
the matrices $S$ and $\hat{S}$ are congruent, and therefore have the same
inertia. As a consequence, $\hat{S}$ is symmetric positive definite if and only
if $S$ is symmetric positive definite.

In summary:
\begin{itemize}
\item unisolvence $\Longleftrightarrow$ $A$ and $T$ are invertible;
\item uniform inf--sup stability $\Longleftrightarrow S$ is symmetric positive definite.
\end{itemize}
\end{remark}

\subsection{Examples satisfying the face-separation condition}
We now present three representative classes of configurations for which the face
functionals $\mathcal{L}_0,\dots,\mathcal{L}_d$ are linearly independent on a suitable
subspace $\mathbb{W}\subset \mathbb{S}_2\left(S_d\right)$; equivalently, these are configurations
for which the face matrix $M$ defined in~\eqref{matM} is nonsingular. 
These examples illustrate the face–separation assumption employed in the structural 
theorem and demonstrate that it is fully consistent with the unisolvence framework 
of Theorem~\ref{thm:CNS-nonsym}.

\begin{example}
\label{ex:constweight}
Let $S_2$ be a nondegenerate triangle with barycentric coordinates
$\lambda_i$, $i=0,1,2$, and let $\Omega\in L^1\left(S_2\right)$ denote the constant
probability density on $S_2$. Since 
\begin{equation*}
\dim\left(\mathbb{P}_2\left(S_2\right)\right)=6, \quad   \dim\left(\mathbb{P}_1\left(S_2\right)\right)=3,  
\end{equation*}
the orthogonal complement $\mathbb{S}_2\left(S_2\right)$ of $\mathbb{P}_1\left(S_2\right)$
within $\mathbb{P}_2\left(S_2\right)$ has dimension $3$. We introduce the functions
\begin{equation*}
    g_0=\lambda_1\lambda_2,  \quad g_1=\lambda_2\lambda_0, \quad g_2=\lambda_0\lambda_1,
\end{equation*}
which span a three-dimensional subspace of $\mathbb{P}_2\left(S_2\right)$. Since
\begin{equation*}
    \operatorname{span}\left\{g_0,g_1,g_2\right\} \cap \mathbb{P}_1\left(S_2\right) = \{0\},
\end{equation*}
the orthogonal projection $I-\Pi_{1,\Omega}$ is injective on 
$\operatorname{span}\left\{g_0,g_1,g_2\right\}$, where $\Pi_{1,\Omega}$ is defined in~\eqref{projOmega}. We define
\begin{equation}
\psi_i=\left(I-\Pi_{1,\Omega}\right)\left(g_i\right)\in\mathbb{S}_2\left(S_2\right), \quad i=0,1,2.
\end{equation}
 Then, the set $\left\{\psi_i\, :\, i=0,1,2\right\}$ is linearly
independent, forming a basis of $\mathbb{S}_2\left(S_2\right)$.

Adopting the cyclic convention
\begin{equation*}
    \B v_3=\B v_0, \quad \B v_4=\B v_1,
\end{equation*}
we introduce the linear parametrization
\begin{equation*}
    \gamma_j: t\in[0,1]\to t\B v_{j+1}+(1-t)\B v_{j+2}\in F_j, \quad j=0,1,2. 
\end{equation*}
Let $\mu_{j,0}$ be a barycentric coordinate along $F_j$, and define
\begin{equation*}
    q_j(\B x)=6\mu_{j,0}^2(\B x)-6\mu_{j,0}(\B x)+1.
\end{equation*}
Since $\gamma_j$ is affine, the change of variables formula gives,
for any integrable function $\phi$ on $F_j$,
\begin{equation*}
\int_{F_j}\phi(\B x)\omega_j(\B x)d\B x
=\int_0^1 \phi\left(\gamma_j(t)\right) dt. 
\end{equation*}
In particular, a short computation shows that
\begin{equation*}
  \int_{F_j}q_j(\B x)\omega_j(\B x)d\B x=  \int_0^1 q_j\left(\gamma_j(t)\right)dt=\int_0^1 \left(6\mu_{j,0}^2\left(\gamma_j(t)\right)-6\mu_{j,0}\left(\gamma_j(t)\right)+1\right)dt=0
\end{equation*}
and, for $i=0,1$,
\begin{equation*}
  \int_{F_j}\mu_{j,i}(\B x)q_j(\B x)\omega_j(\B x)d\B x=  \int_0^1 \mu_{j,i}\left(\gamma_j(t)\right)q_j\left(\gamma_j(t)\right)dt=0,
\end{equation*}
showing that $q_j$ is orthogonal to $\mathbb{P}_1\left(F_j\right)$ with respect to
$\langle\cdot,\cdot\rangle_{\omega_j}$.
The corresponding face functionals are
\begin{equation*}
\mathcal{L}_j(f)
   =\int_{F_j}  f_{_{\mkern 1mu \vrule height 2ex\mkern2mu F_j}}(\B x)q_j(\B{x})\omega_j(\B x)d\B x,
   \quad j=0,1,2.    
\end{equation*}
Since $\left(\Pi_{1,\Omega}(f)\right)_{_{\mkern 1mu \vrule height 2ex\mkern2mu F_j}}\in\mathbb{P}_1\left(F_j\right)$ and $q_j\perp\mathbb{P}_1\left(F_j\right)$ with respect to $\left\langle \cdot,\cdot \right\rangle_{\omega_j}$, we have 
\begin{equation*}
\mathcal{L}_j\left(\left(I-\Pi_{1,\Omega}\right)(f)\right)=\mathcal{L}_j(f),   
\end{equation*}
so the projection onto $\mathbb{S}_2\left(S_2\right)$ does not modify the face moments. Evaluating
$\mathcal{L}_j$ on the basis functions $\psi_i$ gives
\begin{equation*}
M=\left[\mathcal{L}_j\left(\psi_{i}\right)\right]_{ji}=\left[\mathcal{L}_j\left(g_i\right)\right]_{ji}\in\mathbb{R}^{(d+1)\times(d+1)}.    
\end{equation*}
To compute the entries of $M$, note that on $F_0$, we get 
\begin{equation*}
    g_{1_{\mkern 1mu \vrule height 2ex\mkern2mu F_0}}=g_{2_{\mkern 1mu \vrule height 2ex\mkern2mu F_0}}=0,
\end{equation*}
while 
\begin{equation*}
g_0\left(\gamma_0(t)\right)=\lambda_1\left(\gamma_0(t)\right)\lambda_2\left(\gamma_0(t)\right)=t(1-t)=t-t^2.
\end{equation*}
Then, we obtain
\begin{equation*}
\mathcal{L}_0\left(\psi_0\right)=\mathcal{L}_0\left(g_0\right)
   =\int_0^1 \left(\lambda_1\left(\gamma_0(t)\right)\lambda_2\left(\gamma_0(t)\right)\right)q_0\left(\gamma_0(t)\right)dt=-\frac{1}{30}. 
\end{equation*}
By cyclic permutation $$\mathcal{L}_1\left(\psi_1\right)=\mathcal{L}_2\left(\psi_2\right)=-\frac{1}{30},$$
while all off-diagonal
entries vanish. 
Hence,
\begin{equation*}
M=-
\frac{1}{30}I_3,
\quad
\det(M)\neq 0.    
\end{equation*}
In particular, the face functionals $\mathcal{L}_0,\mathcal{L}_1,\mathcal{L}_2$ are
linearly independent on $\mathbb{S}_2\left(S_2\right)$, and therefore on the face space
$\mathbb{W}=\operatorname{span}\left\{\psi_0,\psi_1,\psi_2\right\}$.
\end{example}

\begin{example}
\label{ex:d3}
Let $S_3$ be a nondegenerate tetrahedron with barycentric coordinates
$\lambda_i$, $i=0,\dots,3$, and let $\Omega\in L^1\left(S_3\right)$ be a strictly positive
probability density on $S_3$. In analogy with Example~\ref{ex:constweight}, we introduce
the quadratic functions
\begin{equation*}
g_0=\lambda_1\lambda_2, \quad
g_1=\lambda_2\lambda_3, \quad
g_2=\lambda_3\lambda_0, \quad
g_3=\lambda_0\lambda_1,
\end{equation*}
and define
\begin{equation*}
    \psi_i=\left(I-\Pi_{1,\Omega}\right)\left(g_i\right), \quad i=0,\dots,3.
\end{equation*}
Since 
\begin{equation*}
    \operatorname{span}\left\{g_0,g_1,g_2,g_3\right\} \cap \mathbb{P}_1\left(S_3\right) = \{0\},
\end{equation*}
the projector $I-\Pi_{1,\Omega}$ is injective on this span. Consequently,
$\psi_0,\dots,\psi_3$ are linearly independent, and we set
\begin{equation*}
\mathbb{W}=\operatorname{span}\left\{\psi_0,\psi_1,\psi_2,\psi_3\right\}\subset\mathbb{S}_2\left(S_3\right).
\end{equation*}
For each $j=0,\dots,3$, let $\omega_j$ denote the normalized restriction of $\Omega$ to
$F_j$, and choose a quadratic polynomial $q_j\in\mathbb{P}_2\left(F_j\right)$ satisfying:
\begin{itemize}
  \item[$(i)$] $q_j\perp \mathbb{P}_1\left(F_j\right)$ with respect to $\langle\cdot,\cdot\rangle_{\omega_j}$;
  \item[$(ii)$] Among the two functions in $\left\{g_0,g_1,g_2,g_3\right\}$ that do not vanish on $F_j$,
  denoted by $\bar{g}_{j,1}$ and $\bar{g}_{j,2}$, the polynomial $q_j$ satisfies
  \begin{equation*}
      \left\langle \bar{g}_{{j,1}_{\mkern 1mu \vrule height 2ex\mkern2mu F_j}}, q_j\right\rangle_{\omega_j} = 0,
      \quad
      \left\langle \bar{g}_{{j,2}_{\mkern 1mu \vrule height 2ex\mkern2mu F_j}}, q_j\right\rangle_{\omega_j} \neq 0.
  \end{equation*}
\end{itemize}
We show that such a choice is always possible. Consider the linear functionals
\begin{equation*}
\chi_{j,1},\chi_{j,2}:\mathbb{S}_2\left(F_j\right)\to\mathbb{R}, \quad
 \chi_{j,m}(q)=\left\langle \bar{g}_{{j,m}_{\mkern 1mu \vrule height 2ex\mkern2mu F_j}},q\right\rangle_{\omega_j},\quad m=1,2.
\end{equation*}
By construction, these functionals are nonzero and linearly independent in the dual space $\left(\mathbb{S}_2\left(F_j\right)\right)^{\star}$. Consequently, the kernels $\ker\left(\chi_{j,1}\right)$ and $\ker\left(\chi_{j,2}\right)$ are distinct 
subspaces of $\mathbb{S}_2\left(F_j\right)=\mathbb{P}_2\left(F_j\right)\ominus\mathbb{P}_1\left(F_j\right)$, and we may select
\begin{equation*}
q_j\in\ker\left(\chi_{j,1}\right)\setminus\ker\left(\chi_{j,2}\right),
\end{equation*}
which satisfies~$(ii)$. Since $\ker(\chi_{j,m})\subset \mathbb{S}_2\left(F_j\right)$, $m=1,2,$ condition~$(i)$ holds automatically.

The corresponding face matrix is
\begin{equation*}
M=\left[\mathcal{L}_j\left(\psi_{i}\right)\right]_{ji}=\left[\mathcal{L}_j\left(g_i\right)\right]_{ji}\in \mathbb{R}^{(d+1)\times(d+1)}.
\end{equation*}
For each $j$, precisely one of the restrictions
$\bar g_{j,1}$, $\bar g_{j,2}$
gives a nonvanishing inner product with $q_j$, while all other terms vanish.
Thus each row of $M$ contains exactly one nonzero entry. After a suitable reordering of
columns, $M$ is diagonal with nonzero diagonal entries, and therefore
\begin{equation*}
\det(M)\neq0.
\end{equation*}
\end{example}

\begin{example}
\label{ex:generic}
Let $S_d\subset\mathbb{R}^d$ be a nondegenerate simplex with barycentric coordinates
$\lambda_i$, $i=0,\dots,d$, and let 
\begin{equation*}
    \Omega(\B{x}) = \sum_{|\alpha|\le m} c_{\alpha}\lambda^{\alpha}(\B x), 
\quad c_{\alpha}>0, \quad m\in\mathbb{N}, \quad \lambda=\left(\lambda_0,\dots,\lambda_d\right),
\end{equation*}
be a smooth, strictly positive probability density on $S_d$. For any $j=0,\dots,d$, we fix an arbitrary nonzero polynomial $q_j\in \mathbb{S}_2\left(F_j\right)$.
From~\eqref{dimS2}, we have
\begin{equation*}
\dim\left(\mathbb{S}_2\left(S_d\right)\right)=\frac{d(d+1)}{2},
\end{equation*}
so that, for $d\ge 2$, it is possible to select $d+1$ linearly independent 
functions $\psi_0,\dots,\psi_d\in\mathbb{S}_2\left(S_d\right)$.
We then define
\begin{equation*}
\mathbb{W}=\operatorname{span}\left\{\psi_0,\dots,\psi_d\right\}\subset \mathbb{S}_2\left(S_d\right),
\end{equation*}
a $(d+1)$-dimensional subspace. The corresponding face matrix is
\begin{equation*}
M=\left[\mathcal{L}_j\left(\psi_{i}\right)\right]_{ji}\in \mathbb{R}^{(d+1)\times(d+1)}.
\end{equation*}
Each entry of $M$ depends \emph{real-analytically} on the geometric and algebraic parameters
of the configuration, namely:
\begin{enumerate} 
\item[$i)$] the coefficients of $\psi_i$ expressed in a fixed basis of $\mathbb{S}_2\left(S_d\right)$;
  \item[$ii)$] the coefficients defining the weight function $\Omega$;
  \item[$iii)$] the coefficients of the face polynomials $q_j$ expressed in a fixed basis of $\mathbb{S}_2\left(F_j\right)$.
\end{enumerate}
Consequently, the determinant
\begin{equation*}
\det\left(M\left(\left\{\psi_{i}\right\}_{i=0}^d, \left\{c_{\alpha}\right\}_{\left\lvert \alpha\right\rvert\le m},\left\{q_j\right\}_{j=0}^d\right)\right)
= F\left(\left\{\psi_{i}\right\}_{i=0}^d, \left\{c_{\alpha}\right\}_{\left\lvert \alpha\right\rvert\le m},\left\{q_j\right\}_{j=0}^d\right)
\end{equation*}
is a real-analytic function of these finitely many parameters. If at least one admissible configuration
\begin{equation*}
    \left(\left\{\psi_{i}\right\}_{i=0}^d, \left\{c_{\alpha}\right\}_{\left\lvert \alpha\right\rvert\le m},\left\{q_j\right\}_{j=0}^d\right)
\end{equation*}
satisfies $\det(M)\neq 0$, then $F$ is not identically zero on the corresponding parameter space. It follows that the zero set of $F$ is a proper real-analytic subset of that space,
with empty interior and Lebesgue measure zero.

By Examples~\ref{ex:constweight} and~\ref{ex:d3}, explicit nondegenerate configurations exist
for $d=2$ and $d=3$. For higher dimensions $d>3$, one can proceed analogously to
Example~\ref{ex:constweight}, taking a constant weight and suitably chosen quadratic
face polynomials, thereby obtaining configurations for which $\det(M)\neq0$.
Since $\det(M)$ depends real-analytically on the algebraic parameters for any $d$,
the existence of at least one nondegenerate configuration in each dimension implies that
$F$ is not identically zero. Hence, the face-separation condition is
\emph{generically satisfied} for all $d\ge2$.
\end{example}

\begin{remark}\label{cor:canonical-infsup}
Under the assumption that $M$ is invertible, the bases
$\left\{\rho_k\right\}_{k=1}^{\tilde d}$ of $\mathbb{V}$ and
$\left\{\psi_j\right\}_{j=0}^{d}$ of $\mathbb{W}$ can be chosen so that
\begin{equation*}
G = I_{\tilde d},
\quad
M = I_{d+1},
\quad
C = 0.
\end{equation*}
In this normalized configuration, the Schur complement becomes
\begin{equation*}
T = M - \tilde{C} G^{-1} C=I_{d+1},
\end{equation*}
and the block matrices $K_{11}$, $K_{12}$, $K_{21}$, $K_{22}$ 
in~\eqref{matKij} yield
\begin{equation*}
S = K_{11} - K_{12} K_{22}^{-1} K_{21} = I_{\tilde{d}}.
\end{equation*}
Hence, by definition~\eqref{matThat}, we have
\begin{equation*}
\hat{S} = G^{-1/2} S G^{-1/2} = I_{\tilde{d}}.
\end{equation*}
Therefore, by Theorem~\ref{th22p},
the inf--sup constant satisfies
\begin{equation*}
\beta 
= \sqrt{\sigma_{\min}\left(\hat{S}\right)}
= \sqrt{\sigma_{\min}\left(I_{\tilde{d}}\right)}
= 1.
\end{equation*}
Thus, in this orthonormal and face-separated configuration,
the quadratic histopolation problem is not only unisolvent but also
\emph{optimally stable}, in the sense that the discrete inf--sup constant
attains the value $\beta = 1$.
\end{remark}

\subsection{Affine-invariant geometric robustness}
The spectral characterization of the local inf--sup constant in terms of
the reduced Schur complement shows that stability may, in principle, depend
on the shape and aspect ratio of each simplex. 
In the practically relevant situation where all elements are obtained by
affine mappings from a single reference simplex and both weights and test
functions are transported accordingly, the following result shows that the
local moment matrices, the reduced Schur complements, and hence the local
inf--sup constants are in fact invariant under such geometric transformations.

Consequently, for any simplicial triangulation in the affine-mapping and barycentric pullback setting, the computation of $\beta$ (and any spectral quantity used for parameter tuning) reduces to a single eigenvalue problem on the reference simplex and can be reused across all elements.

\begin{theorem}\label{thm:affine-robust}
Let $\mathcal{T}_h$ be a simplicial mesh such that each element 
$K \in \mathcal{T}_h$ is the image of a fixed reference simplex 
$\hat K$ under an affine bijection $\Phi_K : \hat K \to K$.
For each $j=0,\dots,d$, we set $F_j:=\Phi_K\left(\hat F_j\right)$ and denote by
\begin{equation*}
    \Phi_{K,j}=\Phi_K{_{\mkern 1mu \vrule height 2ex\mkern2mu \hat{F}_j}},
\end{equation*}
the restriction of $\Phi_K$ to $\hat F_j$. 
Assume that the following conditions hold:
\begin{itemize}
  \item for any $f\in L^1(K)$,
  \begin{equation}\label{eq:bulk-pullback}
  \int_K f(\B \sigma)\Omega_K(\B \sigma)d\B \sigma
  = \int_{\hat K} f\left(\Phi_K\left(\B {\hat\sigma}\right)\right)\hat\Omega\left(\B{\hat\sigma}\right)d\B{\hat\sigma};
  \end{equation}
  \item for each face $\hat F_j\subset\hat K$ and any $g\in L^1\left(F_j\right)$,
  \begin{equation}\label{eq:face-pullback}
  \int_{F_j} g(\B x)\omega_{K,j}(\B x)d \B x
  = \int_{\hat F_j} g\left(\Phi_{K,j}\left(\B{\hat x}\right)\right)\hat\omega_j\left(\B{\hat x}\right)d\B{\hat x};
  \end{equation}
  \item  for each $K$, the interior and face test functions satisfy
\begin{equation}\label{sasaccb}
      \rho_{K,k}(\B \sigma)= \hat\rho_k\left(\Phi_{K}^{-1}(\B \sigma)\right), 
\quad \B \sigma\in K,\quad k=1,\dots,\tilde d,
\end{equation}
and
\begin{equation}\label{sasaccb1}
     \psi_{K,j}(\B \sigma)= \hat\psi_j\left(\Phi_{K}^{-1}(\B \sigma)\right), 
  \quad  \B \sigma\in K, \quad j=0,\dots, d.
\end{equation}
Moreover, the polynomials $q_{K,j}$ satisfy
\begin{equation}\label{sasaccb3}
  q_{K,j}(\B x)= \hat q_j\left(\Phi_{K,j}^{-1}(\B x)\right), 
  \quad \B x\in F_j,  \quad j=0,\dots, d;
\end{equation}
\item The reduced stability matrix $\hat S$ on $\hat K$
is symmetric positive definite.
\end{itemize}
Then the associated local moment matrices
$\left(G_K,C_K,\tilde C_K,M_K\right)$ 
coincide, for all $K\in\mathcal{T}_h$, with the corresponding matrices
$\left(G,C,\tilde C,M\right)$ on $\hat K$.
In particular, the reduced stability matrices $\hat S_K$ and the local inf--sup
constants $\beta_K$ are independent of $K$ and of $h$, and satisfy
$\beta_K=\beta > 0$.
\end{theorem}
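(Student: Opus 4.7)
The plan is to reduce the identity $\hat S_K=\hat S$ to a block-by-block verification of the equalities $G_K=G$, $C_K=C$, $\tilde C_K=\tilde C$, $M_K=M$, and then invoke the algebraic definitions of the Schur complement and of $\beta$ given in Theorem~\ref{th22p}. Since each matrix block is defined as a weighted integral of two test functions, I would simply insert the pullback hypotheses into those integrals and check that the reference expressions are recovered.

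I would start with the two purely volumetric blocks. For the Gram matrix, the entry $\left[G_K\right]_{k\ell}=\int_K \rho_{K,\ell}(\B\sigma)\rho_{K,k}(\B\sigma)\Omega_K(\B\sigma)d\B\sigma$ is rewritten using~\eqref{eq:bulk-pullback} and the definition~\eqref{sasaccb}, which replaces $\rho_{K,k}$ by $\hat\rho_k\circ\Phi_K^{-1}$, so that after composition with $\Phi_K$ we obtain $\int_{\hat K}\hat\rho_\ell(\B{\hat\sigma})\hat\rho_k(\B{\hat\sigma})\hat\Omega(\B{\hat\sigma})d\B{\hat\sigma}=\left[G\right]_{k\ell}$. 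The same manipulation, with $\psi_{K,i}$ in place of one factor and using~\eqref{sasaccb1}, yields $C_K=C$. For these two blocks the argument is essentially bookkeeping, and no Jacobian appears because the weights are already prescribed in pullback form in the hypotheses.

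The slightly more delicate step concerns the face blocks $M_K$ and $\tilde C_K$, since their integrands involve restrictions of volumetric test functions to faces together with face polynomials $q_{K,j}$. The key observation I would spell out is that, by definition, $\Phi_{K,j}=\Phi_K{_{\mkern 1mu \vrule height 2ex\mkern2mu\hat F_j}}$ is a bijection $\hat F_j\to F_j$, so that for any $\B x\in F_j$ one has $\Phi_{K,j}^{-1}(\B x)=\Phi_K^{-1}(\B x)\in\hat F_j$. Combining this with~\eqref{sasaccb1} and~\eqref{sasaccb3} gives $\psi_{K,i}{_{\mkern 1mu \vrule height 2ex\mkern2mu F_j}}(\B x)=\hat\psi_i{_{\mkern 1mu \vrule height 2ex\mkern2mu\hat F_j}}\left(\Phi_{K,j}^{-1}(\B x)\right)$ and the analogous identity for $\rho_{K,\ell}$. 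Inserting these into the definitions of $\left[M_K\right]_{ji}$ and $\left[\tilde C_K\right]_{j\ell}$ and applying the face pullback~\eqref{eq:face-pullback} yields $M_K=M$ and $\tilde C_K=\tilde C$.

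Once the four block identities are established, the conclusion is automatic. The Schur complement $S_K=K_{11,K}-K_{12,K}K_{22,K}^{-1}K_{21,K}$ depends only on the four block matrices via~\eqref{matKij} and~\eqref{matS}, so $S_K=S$, and then $\hat S_K=G_K^{-1/2}S_KG_K^{-1/2}=\hat S$. Since $\hat S$ is symmetric positive definite on $\hat K$ by assumption, Theorem~\ref{th22p} gives $\beta_K=\sqrt{\sigma_{\min}(\hat S_K)}=\sqrt{\sigma_{\min}(\hat S)}=\beta>0$ for every $K$, independent of $h$. The main obstacle I anticipate is purely notational: ensuring that the face restriction of the volumetric pullback agrees with the intrinsic face pullback via $\Phi_{K,j}$, which is exactly the compatibility that the hypotheses~\eqref{sasaccb1} and~\eqref{sasaccb3} together with the definition of $\Phi_{K,j}$ are designed to guarantee.
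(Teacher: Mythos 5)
Your proposal is correct and follows essentially the same route as the paper: verify $G_K=G$, $C_K=C$, $\tilde C_K=\tilde C$, $M_K=M$ by inserting the pullback hypotheses~\eqref{eq:bulk-pullback}, \eqref{eq:face-pullback} together with~\eqref{sasaccb}--\eqref{sasaccb3} into the defining integrals, then observe that $\hat S_K$ is obtained from these blocks by fixed algebraic operations and conclude $\beta_K=\sqrt{\sigma_{\min}(\hat S_K)}=\beta>0$ via Theorem~\ref{th22p}. Your explicit remark that the face restriction of the volumetric pullback agrees with the intrinsic face pullback through $\Phi_{K,j}=\Phi_K{_{\mkern 1mu \vrule height 2ex\mkern2mu \hat F_j}}$ is a slightly more careful spelling-out of a step the paper treats as ``analogous,'' but the argument is the same.
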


\begin{proof}
For each $K\in\mathcal{T}_h$, let $\Phi_K : \hat K \to K$ be the affine map
such that
\begin{equation}\label{facefs}
    \Phi_K\left(\B{\hat{v}}_j\right)=\B{v}_j,\quad j=0,\dots,d,
\end{equation}
where $\B{\hat{v}}_j$, $j=0,\dots,d$, are the vertices of $\hat{K}$. 
Recall that~\eqref{eq:bulk-pullback} and~\eqref{eq:face-pullback} are the change of variables identities under the affine map $\Phi_K$ (and its restriction on faces). In particular, $\hat\Omega$ (and $\hat\omega_j$ on $\hat F_j$) denotes the pullback density on the reference simplex, including the appropriate Jacobian factor associated with $\Phi_K$.

Using~\eqref{eq:bulk-pullback}, the entries of the interior Gram matrix $G_K$
can be written as
\begin{eqnarray*}
    \left[ G_K\right]_{k\ell}
 &=& \int_K \rho_{K,k}(\B \sigma)\rho_{K,\ell}(\B \sigma)\Omega_K(\B \sigma)d\B \sigma
 \\ &=& \int_{\hat K} \rho_{K,k}\left(\Phi_K\left(\B{\hat{ \sigma}}\right)\right)\rho_{K,\ell}\left(\Phi_K\left(\B{\hat{ \sigma}}\right)\right)\hat{\Omega}\left(\B{\hat\sigma}\right)d\B{\hat\sigma}\\ &=& \int_{\hat K} \hat\rho_k\left(\B{\hat\sigma}\right)
      \hat\rho_\ell\left(\B{\hat\sigma}\right)
\hat\Omega\left(\B{\hat\sigma}\right)d\B{\hat\sigma},
\end{eqnarray*}
where in the last equality, we have used~\eqref{sasaccb}. 
The right-hand side does not depend on $K$ and defines the reference matrix
$[G]_{k\ell}$ on $\hat K$. Thus $G_K = G$ for all
$K\in\mathcal{T}_h$.

Analogously, using~\eqref{eq:face-pullback}, \eqref{sasaccb}, \eqref{sasaccb1} and~\eqref{sasaccb3}, it is possible to  show that, for any $K\in\mathcal{T}_h$, the coupling and face blocks satisfy
\begin{equation*}
C_K = C,\quad \tilde C_K = \tilde C,\quad M_K = M. 
\end{equation*}
The reduced stability matrix $\hat S_K$ is obtained from the blocks
$\left(G_K,C_K,\tilde C_K,M_K\right)$ by a fixed sequence of algebraic
operations. Since all these blocks coincide with their reference counterparts,
we obtain
\begin{equation*}
      \hat S_K = \hat S,
    \quad \forall K\in\mathcal{T}_h.
\end{equation*}
By the general inf--sup result relating the local stability constant
$\beta_K$ to the smallest eigenvalue of $\hat S_K$, we have
\begin{equation*}
     \beta_K^2 = \sigma_{\min}\left(\hat S_K\right)
              = \sigma_{\min}\left(\hat S\right)
              = \beta^2.
\end{equation*}
Since $\hat S$ is symmetric positive definite by assumption, $\beta>0$ and
therefore
\begin{equation*}
    \beta_K = \beta > 0.
\end{equation*}
\end{proof}

\section{Optimal choice of the face and interior quadratic polynomials}
\label{sec3}
The face moment functionals
\begin{equation*}
\mathcal{L}_j(f)
= \int_{F_j} f_{_{\mkern 1mu \vrule height 2ex\mkern2mu F_j}}(\B x)
q_j(\B x)\, \omega_j(\B x)\, d\B x
= \left\langle f_{_{\mkern 1mu \vrule height 2ex\mkern2mu F_j}}, q_j \right\rangle_{\omega_j},
\quad j = 0,\dots,d,
\end{equation*}
determine the face block
\begin{equation*}
M = \left[ \mathcal{L}_j\left(\psi_{i}\right) \right]_{ji}
\in \mathbb{R}^{(d+1)\times(d+1)},
\end{equation*}
where $\psi_0,\dots,\psi_d$ is the chosen basis of the face space $\mathbb{W}$. Each polynomial $q_j \in \mathbb{P}_2\left(F_j\right)$ is required to satisfy
\begin{equation*}
\left\langle q_j, \varphi \right\rangle_{\omega_j} = 0
\quad \forall \varphi \in \mathbb{P}_1\left(F_j\right).
\end{equation*}
This condition does not uniquely determine $q_j$, and the particular choice
of these polynomials affects the spectral conditioning of the face block $M$
and, consequently, of the global matrix $H$.

The next theorem provides a spectrally motivated criterion for selecting
the family  
$\left\{q_j\right\}_{j=0}^d$, aimed at improving the conditioning of $M$
while preserving unisolvence. To this end, for each face $F_j$ we restrict
$q_j$ to the orthogonal complement
\begin{equation}\label{alsaas}
\mathbb{S}_2\left(F_j\right)
= \mathbb{P}_2\left(F_j\right)\ominus\mathbb{P}_1\left(F_j\right).
\end{equation}

We next recall a basic spectral property of Gram matrices that will be useful
for our purposes.
\begin{lemma}
\label{lem:loewner}
Let $G$ be the Gram matrix associated with a linearly independent family $\left\{g_s\right\}_{s=1}^n \subset L^2_{\Omega}\left(S_d\right)$, normalized so that
\begin{equation*}
\left\|g_s\right\|_{\Omega} = 1, \quad s=1,\dots,n.    
\end{equation*}
Then $G$ is symmetric positive definite and
\begin{equation}
\sigma_{\min}(G) \le 1 \le \sigma_{\max}(G).
\end{equation}
Consequently
\begin{equation*}
    \kappa_2(G) = \frac{\sigma_{\max}(G)}{\sigma_{\min}(G)} \ge 1,
\end{equation*}
with equality if and only if $\left\{g_s\right\}_{s=1}^n$ is orthonormal, or equivalently $G = I_n$.
\end{lemma}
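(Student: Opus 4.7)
The plan is to exploit the normalization $\|g_s\|_\Omega=1$, which forces all diagonal entries of $G$ to equal $1$, and then extract the desired spectral bounds via either the trace or the Rayleigh quotient applied to canonical basis vectors.

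First I would verify that $G$ is symmetric positive definite. Symmetry is immediate from the symmetry of $\langle\cdot,\cdot\rangle_\Omega$, and positive definiteness follows from the identity
\begin{equation*}
\B c^{\top}G\B c=\left\|\sum_{s=1}^{n}c_s g_s\right\|_{\Omega}^{2}\ge 0,\quad \B c\in\mathbb{R}^{n},
\end{equation*}
combined with the linear independence of $\{g_s\}_{s=1}^n$, which makes the right-hand side strictly positive for $\B c\neq\B 0$. In particular, $G$ has $n$ real positive eigenvalues and the condition number $\kappa_2(G)=\sigma_{\max}(G)/\sigma_{\min}(G)$ is well defined and $\ge 1$.

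Next I would use the normalization to observe that $[G]_{ss}=\langle g_s,g_s\rangle_\Omega=\|g_s\|_\Omega^2=1$ for every $s$. From this I draw the spectral bounds in one of two equivalent ways. Either I note $\operatorname{tr}(G)=\sum_{i=1}^n\sigma_i(G)=n$, so the arithmetic mean of the eigenvalues equals one and hence $\sigma_{\min}(G)\le 1\le\sigma_{\max}(G)$. Or, more directly, I apply the Courant--Fischer characterization to the canonical basis vector $e_s\in\mathbb{R}^n$, which yields
\begin{equation*}
\sigma_{\min}(G)\le \frac{e_s^{\top}G e_s}{e_s^{\top}e_s}=[G]_{ss}=1\le\sigma_{\max}(G).
\end{equation*}
The bound $\kappa_2(G)\ge 1$ then follows immediately.

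For the equality characterization, I would argue: if $\kappa_2(G)=1$ then $\sigma_{\max}(G)=\sigma_{\min}(G)$, so all eigenvalues of $G$ coincide, say with common value $\sigma$. Combined with $\sum_i\sigma_i=\operatorname{tr}(G)=n$, this forces $\sigma=1$. Since $G$ is symmetric, it is orthogonally diagonalizable, so $G=Q(I_n)Q^{\top}=I_n$. Conversely, if $G=I_n$, then $\sigma_{\min}(G)=\sigma_{\max}(G)=1$ and $\kappa_2(G)=1$, and $G=I_n$ is equivalent to $\langle g_s,g_t\rangle_\Omega=\delta_{st}$, i.e. orthonormality of $\{g_s\}_{s=1}^n$. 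No step here presents a real obstacle; the only subtlety worth stating explicitly is the use of orthogonal diagonalizability of symmetric matrices to pass from \emph{all eigenvalues equal to $1$} to the matrix identity $G=I_n$.
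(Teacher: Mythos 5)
Your proposal is correct and follows essentially the same route as the paper: the Gram-quadratic-form argument for positive definiteness, the observation that the unit normalization makes all diagonal entries (hence the trace) equal to $n$ so the eigenvalue average is $1$, and the trace-plus-diagonalizability argument for the equality case $G=I_n$. The Rayleigh-quotient variant with canonical basis vectors you mention is a minor alternative to the paper's trace step, but not a different proof in substance.
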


\begin{proof}
For any $\B x=\left(x_1,\dots,x_n\right)^\top\in\mathbb{R}^n$, we have
\begin{equation*}
     \B x^\top G \B x=\sum_{m,s=1}^n x_m x_s\left\langle g_m,g_s\right\rangle_{\Omega}
  =\left\|\sum_{s=1}^n x_s g_s\right\|_{\Omega}^2\ge 0,
\end{equation*}
with strict inequality for $\B x\neq \B 0$ due to the linear 
independence of the functions $g_s$, $s=1,\dots,n$. Hence $G$ is symmetric 
positive definite. Let
\begin{equation*}
G=Q\Lambda Q^\top, \quad    \Lambda=\mathrm{diag}\left(\sigma_1,\dots,\sigma_n\right), 
\end{equation*}
be its spectral decomposition, where
\begin{equation*}
    0<\sigma_{\min}(G)\le \sigma_i\le \sigma_{\max}(G), \quad i=1,\dots,n.
\end{equation*}
Since $\left\|g_s\right\|_{\Omega}^2 = \left\langle g_s, g_s\right\rangle_{\Omega} = 1$, $s=1,\dots,n$,
all diagonal entries of $G$ are equal to $1$. Therefore
\begin{equation*}
     \operatorname{trace}(G)=\sum_{s=1}^n \left\langle g_s,g_s\right\rangle=n=\sum_{s=1}^n \sigma_s.
\end{equation*}
In particular, the average eigenvalue is $1$, which implies
\begin{equation*}
\sigma_{\min}(G)\le 1\le \sigma_{\max}(G).    
\end{equation*}
Since the spectral condition number satisfies  
\begin{equation*}
    \kappa_2(G)=\frac{\sigma_{\max}(G)}{\sigma_{\min}(G)}\ge 1,
\end{equation*}
equality occurs if and only if all eigenvalues of $G$ are equal to $1$, that is,
$\Lambda = I_n$ and hence $G = I_n$. In this case, the family $\left\{g_s\right\}_{s=1}^n$ is
orthonormal with respect to $\left\langle \cdot,\cdot\right\rangle_{\Omega}$.
\end{proof}

\begin{theorem}
\label{thm:optimal-face}
Let $\left\{\psi_j\right\}_{j=0}^d$ be a basis of $\mathbb{W}$. For each $j\in\{0,\dots,d\}$,
assume that there exists an index $i(j)\in\{0,\dots,d\}$ such that
\begin{equation*}
\left( I - \Pi_{1,\omega_j} \right)
\left( \psi_{i(j)_{_{\mkern 1mu \vrule height 2ex\mkern2mu F_j}}} \right)
\neq 0.
\end{equation*}
We define
\begin{equation}\label{eq:optqi}
q_j^{\star}
=
\arg\max_{\substack{q \in \mathbb{S}_2\left(F_j\right) \\[1mm] \left\| q \right\|_{\omega_j} = 1}}
\left| \left\langle \psi_{i(j)_{_{\mkern 1mu \vrule height 2ex\mkern2mu F_j}}}, q \right\rangle_{\omega_j} \right|.
\end{equation}
Then the maximizer exists, is unique up to sign, and is given by
\begin{equation}\label{eq:opt-face-final}
q_j^{\star}
=
\frac{
\left( I - \Pi_{1,\omega_j} \right)
\left( \psi_{i(j)_{_{\mkern 1mu \vrule height 2ex\mkern2mu F_j}}} \right)
}{
\left\|
\left( I - \Pi_{1,\omega_j} \right)
\left( \psi_{i(j)_{_{\mkern 1mu \vrule height 2ex\mkern2mu F_j}}} \right)
\right\|_{\omega_j}
}.
\end{equation}
In particular, $q_j^{\star}\in\mathbb{S}_2\left(F_j\right)$ and $\left\| q_j^{\star} \right\|_{\omega_j}=1$.
\end{theorem}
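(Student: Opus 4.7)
The plan is to reduce the problem to a finite-dimensional maximization of a linear functional over the unit sphere of $\mathbb{S}_2\left(F_j\right)$ and then apply the Cauchy--Schwarz inequality in the weighted inner product $\left\langle\cdot,\cdot\right\rangle_{\omega_j}$. The key observation is structural: although $\psi_{i(j)_{_{\mkern 1mu \vrule height 2ex\mkern2mu F_j}}}$ belongs to $\mathbb{P}_2\left(F_j\right)$ and may carry a nontrivial linear part, this linear part is invisible to any test polynomial $q\in\mathbb{S}_2\left(F_j\right)$, since by definition~\eqref{alsaas} the space $\mathbb{S}_2\left(F_j\right)$ is precisely the $\left\langle\cdot,\cdot\right\rangle_{\omega_j}$-orthogonal complement of $\mathbb{P}_1\left(F_j\right)$ inside $\mathbb{P}_2\left(F_j\right)$.

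First, I would set
\begin{equation*}
r_j:=\left(I-\Pi_{1,\omega_j}\right)\left(\psi_{i(j)_{_{\mkern 1mu \vrule height 2ex\mkern2mu F_j}}}\right),
\end{equation*}
and decompose $\psi_{i(j)_{_{\mkern 1mu \vrule height 2ex\mkern2mu F_j}}}=\Pi_{1,\omega_j}\left(\psi_{i(j)_{_{\mkern 1mu \vrule height 2ex\mkern2mu F_j}}}\right)+r_j$, noting that by the defining property of the weighted projection $\Pi_{1,\omega_j}$ onto $\mathbb{P}_1\left(F_j\right)$, the residual $r_j$ lies in $\mathbb{S}_2\left(F_j\right)$. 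For any $q\in\mathbb{S}_2\left(F_j\right)$, the orthogonality $q\perp \mathbb{P}_1\left(F_j\right)$ gives
\begin{equation*}
\left\langle \psi_{i(j)_{_{\mkern 1mu \vrule height 2ex\mkern2mu F_j}}},q\right\rangle_{\omega_j}=\left\langle r_j,q\right\rangle_{\omega_j},
\end{equation*}
so the functional in~\eqref{eq:optqi} depends only on $r_j$.

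Next, I would apply the Cauchy--Schwarz inequality with respect to $\left\langle\cdot,\cdot\right\rangle_{\omega_j}$, which is a genuine inner product on $\mathbb{S}_2\left(F_j\right)$ since $\omega_j$ is positive almost everywhere on $F_j$. This yields
\begin{equation*}
\left|\left\langle r_j,q\right\rangle_{\omega_j}\right|\le \left\|r_j\right\|_{\omega_j}\left\|q\right\|_{\omega_j}=\left\|r_j\right\|_{\omega_j}
\end{equation*}
for every $q$ on the unit sphere of $\mathbb{S}_2\left(F_j\right)$. The hypothesis $r_j\neq 0$ guarantees that this upper bound is strictly positive, so the maximum is nontrivial and attained on the compact unit sphere. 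Equality in Cauchy--Schwarz holds if and only if $q$ and $r_j$ are collinear, and the unit-norm constraint then selects $q=\pm r_j/\left\|r_j\right\|_{\omega_j}$, which is exactly formula~\eqref{eq:opt-face-final}. Uniqueness up to sign and the memberships $q_j^{\star}\in\mathbb{S}_2\left(F_j\right)$, $\left\|q_j^{\star}\right\|_{\omega_j}=1$ follow immediately.

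There is essentially no hard step here: the result is Cauchy--Schwarz packaged geometrically, and the only subtlety is the initial reduction showing that the linear part of $\psi_{i(j)_{_{\mkern 1mu \vrule height 2ex\mkern2mu F_j}}}$ drops out against test functions in $\mathbb{S}_2\left(F_j\right)$. The nondegeneracy assumption on $r_j$ is precisely what is needed to exclude the trivial case in which the sup in~\eqref{eq:optqi} would vanish identically.
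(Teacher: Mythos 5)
Your proof is correct, and its core is the same as the paper's: both arguments ultimately reduce the maximization to a single application of the Cauchy--Schwarz inequality, with the equality case identifying the maximizer up to sign. The difference is in execution. The paper expands $q$ in an $\left\langle\cdot,\cdot\right\rangle_{\omega_j}$-orthonormal basis $\left\{\tau_{j,\iota}\right\}$ of $\mathbb{S}_2\left(F_j\right)$, converts the problem into maximizing $\left|\B\zeta^{\top}\B\psi_{j,i(j)}\right|$ over the Euclidean unit sphere, applies Cauchy--Schwarz in $\mathbb{R}^{d(d-1)/2}$, and only at the end resums the coefficient vector and invokes Parseval to recognize the result as the normalized residual $\left(I-\Pi_{1,\omega_j}\right)\left(\psi_{i(j)_{_{\mkern 1mu \vrule height 2ex\mkern2mu F_j}}}\right)$. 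You instead make the key structural observation first --- that $\left\langle \psi_{i(j)_{_{\mkern 1mu \vrule height 2ex\mkern2mu F_j}}},q\right\rangle_{\omega_j}=\left\langle r_j,q\right\rangle_{\omega_j}$ for every $q\in\mathbb{S}_2\left(F_j\right)$, since the linear part is killed by orthogonality --- and then apply Cauchy--Schwarz directly in the weighted inner product on $\mathbb{S}_2\left(F_j\right)$. This is slightly more economical: it avoids introducing an orthonormal basis and the Parseval step, and it makes explicit the reduction that the paper uses only implicitly when it identifies $\sum_\iota \left\langle \psi_{i(j)_{_{\mkern 1mu \vrule height 2ex\mkern2mu F_j}}},\tau_{j,\iota}\right\rangle_{\omega_j}\tau_{j,\iota}$ with the projection residual. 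You also handle the sign ambiguity and the role of the nondegeneracy hypothesis $r_j\neq 0$ cleanly, which the paper's proof passes over quickly. No gaps.
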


\begin{proof}
Let $\left\{\tau_{j,\iota}\right\}_{\iota=1}^{\frac{d(d-1)}{2}}$ be an orthonormal basis 
of $\mathbb{S}_2\left(F_j\right)$ with respect to $\langle \cdot,\cdot\rangle_{\omega_j}$. Any $q\in\mathbb{S}_2\left(F_j\right)$ with $\|q\|_{\omega_j}=1$ can be written as
\begin{equation*}
q = \sum_{\iota=1}^{\frac{d(d-1)}{2}} \zeta_{\iota} \tau_{j,\iota},
\quad
\|\B{\zeta}\|_2 = 1, \quad \B{\zeta}=\left(\zeta_1,\dots,\zeta_{\frac{d(d-1)}{2}}\right)^{\top}.
\end{equation*}
By linearity of the inner product,
\begin{equation}\label{newresa}
\left\langle \psi_{i(j)_{_{\mkern 1mu \vrule height 2ex\mkern2mu F_j}}}, q \right\rangle_{\omega_j}= \sum_{\iota=1}^{\frac{d(d-1)}{2}}\zeta_{\iota}
\left\langle \psi_{i(j)_{_{\mkern 1mu \vrule height 2ex\mkern2mu F_j}}}, \tau_{j,\iota} \right\rangle_{\omega_j}
= \B{\zeta}^{\top}\B{\psi}_{j,i(j)},
\end{equation}
where 
\begin{equation*}
\B{\psi}_{j,i(j)}
= \left(\left\langle \psi_{i(j)_{_{\mkern 1mu \vrule height 2ex\mkern2mu F_j}}}, \tau_{j,1} \right\rangle_{\omega_j}, \dots,
\left\langle \psi_{i(j)_{_{\mkern 1mu \vrule height 2ex\mkern2mu F_j}}}, \tau_{j,\frac{d(d-1)}{2}} \right\rangle_{\omega_j} \right)^{\top}.
\end{equation*}
Since $\|q\|_{\omega_j}=1$ implies $\|\B{\zeta}\|_2=1$, 
by~\eqref{newresa} the maximization problem in~\eqref{eq:optqi} can be written as
\begin{equation*}
\arg\max_{\substack{q \in \mathbb{S}_2\left(F_j\right) \\[1mm] \left\| q \right\|_{\omega_j} = 1}}
\left| \left\langle \psi_{i(j)_{_{\mkern 1mu \vrule height 2ex\mkern2mu F_j}}}, q \right\rangle_{\omega_j} \right|=\arg\max_{\left\|\B{\zeta}_{j}\right\|_2=1} 
\left|\B{\zeta}^{\top}_j\B{\psi}_{j,i(j)}\right|.
\end{equation*}
By the Cauchy--Schwarz inequality, the maximum is attained when
\begin{equation*}
\B{\zeta}_j
=\frac{ \B{\psi}_{j,i(j)}}{\left\|\B{\psi}_{j,i(j)}\right\|_2}.
\end{equation*}
Then the optimal polynomial is
\begin{equation*}
q_j^{\star}=\sum_{\iota=1}^{\frac{d(d-1)}{2}} \zeta_{j,\iota} \tau_{j,\iota}=\sum_{\iota=1}^{\frac{d(d-1)}{2}} \frac{\left\langle \psi_{i(j)_{_{\mkern 1mu \vrule height 2ex\mkern2mu F_j}}}, \tau_{j,\iota} \right\rangle_{\omega_j}}{\left\lVert \B{\psi}_{j,i(j)} \right\rVert_2} \tau_{j,\iota}= \frac{\left(I-\Pi_{1,\omega_j}\right)\left(\psi_{i(j)_{_{\mkern 1mu \vrule height 2ex\mkern2mu F_j}}}\right)}{\left\lVert \B{\psi}_{j,i(j)} \right\rVert_2}.
\end{equation*}
Finally, applying the Parseval identity we have
\begin{equation*}
\left\|\B{\psi}_{j,i(j)}\right\|_2^2
=
\left\|
\left(I-\Pi_{1,\omega_j}\right)\left(\psi_{i(j)_{_{\mkern 1mu \vrule height 2ex\mkern2mu F_j}}}\right)
\right\|_{\omega_j}^2,
\end{equation*}
and then
\begin{equation*}
  q^{\star}_j= \frac{\left(I-\Pi_{1,\omega_j}\right)\left(\psi_{i(j)_{_{\mkern 1mu \vrule height 2ex\mkern2mu F_j}}}\right)}{\left\|
\left(I-\Pi_{1,\omega_j}\right)\left(\psi_{i(j)_{_{\mkern 1mu \vrule height 2ex\mkern2mu F_j}}}\right)
\right\|_{\omega_j}}.
\end{equation*}
\end{proof}

\subsection{Explicit realizations}
We now give explicit expressions for the face quadratic polynomials in the
examples discussed in the previous section.

\begin{itemize}
    \item \textbf{Constant weight.}\\ For the uniform weight $\omega_j(t) = 1$ on a one-dimensional face $t\in[0,1]$,
the unique normalized quadratic polynomial orthogonal to $\mathbb{P}_1([0,1])$ is
\begin{equation*}
    q_j^{\star}(t) = \sqrt{5}\left(6t^2 - 6t + 1\right).
\end{equation*}
This reproduces the choice made in Example~\ref{ex:constweight}.
\item \textbf{Dirichlet weight.}\\ 
For a Dirichlet-type weight
\begin{equation*}
    \omega_j(t) = t^{\alpha_r - 1}\left(1 - t\right)^{\alpha_s - 1},
    \quad \alpha_r,\alpha_s>0,
\end{equation*}
the subspace $\mathbb{P}_2([0,1]) \ominus \mathbb{P}_1([0,1])$ is one-dimensional
and consists of all quadratics orthogonal to $1$ and $t$ with respect to
$\langle\cdot,\cdot\rangle_{\omega_j}$. A canonical generator of this space is
the shifted Jacobi polynomial of degree two, which in $[0,1]$ coordinates reads
\begin{equation*}
    q_j^{\star}(t) = c_j\pi_{J,2}^{(\alpha_r - 1,\alpha_s - 1)}\left(2t - 1\right),
\end{equation*}
where $\pi_{J,2}^{(\alpha_r - 1,\alpha_s - 1)}$ denotes the classical Jacobi polynomial and $c_j>0$ is a normalization constant chosen so that $\left\|q_j^{\star}\right\|_{\omega_j}=1$. By construction, the polynomial $q_j^{\star}$ satisfies
\begin{equation*}
    \int_0^1 q_j^{\star}(t)\omega_j(t)dt = 0,
    \quad
    \int_0^1 tq_j^{\star}(t)\omega_j(t)dt = 0,
\end{equation*}
and it is the unique (up to sign) element of $\mathbb{P}_2([0,1])$ fulfilling
these orthogonality conditions.

In the triangular configuration of Example~\ref{ex:constweight}, the face bubble
functions are
\begin{equation*}
    g_0 = \lambda_1\lambda_2, \quad
    g_1 = \lambda_2\lambda_0, \quad
    g_2 = \lambda_0\lambda_1,
\end{equation*}
and the associated basis functions are 
\begin{equation*}
\psi_i = \left(I - \Pi_{1,\omega_j}\right)\left(g_i\right), \quad i=0,1,2.    
\end{equation*}
On each face $F_j$ we have
\begin{equation*}
    g_{i_{\mkern 1mu \vrule height 2ex\mkern2mu F_j}} =
    \begin{cases}
        0, & i \neq j,\\[4pt]
        \mu_{j,0}\mu_{j,1} \in \mathbb{P}_2\left(F_j\right), & i = j,
    \end{cases}
\end{equation*}
where $\mu_{j,0}$ and $\mu_{j,1}$ denote the local barycentric coordinates on
$F_j$. Consequently,
\begin{equation*}
    \psi_{i_{\mkern 1mu \vrule height 2ex\mkern2mu F_j}}
    = \left(I - \Pi_{1,\omega_j}\right)\left(g_{i_{\mkern 1mu \vrule height 2ex\mkern2mu F_j}}\right)
    =
    \begin{cases}
        0, & i \neq j,\\[4pt]
        \left(I - \Pi_{1,\omega_j}\right)\left(\mu_{j,0}\mu_{j,1}\right) \in \mathbb{S}_2\left(F_j\right), & i = j.
    \end{cases}
\end{equation*}
Since $q_j^{\star}\in\mathbb{S}_2\left(F_j\right)$ is orthogonal to $\mathbb{P}_1\left(F_j\right)$, it follows that
\begin{equation*}
    \left\langle \psi_{i_{\mkern 1mu \vrule height 2ex\mkern2mu F_j}}, q_j^{\star} \right\rangle_{\omega_j}
    = \left\langle g_{i_{\mkern 1mu \vrule height 2ex\mkern2mu F_j}}, q_j^{\star} \right\rangle_{\omega_j},
\end{equation*}
so the face matrix $M$ defined in~\eqref{matM} is diagonal, with entries
\begin{equation*}
    [M]_{jj}=\left\langle \mu_{j,0}\mu_{j,1},q_j^{\star} \right\rangle_{\omega_j}\neq0.
\end{equation*}
\end{itemize}

\subsection{Optimal choice of the interior quadratic polynomials}
\label{subsec:optimal_rho}
In the quadratic histopolation framework introduced in Section~\ref{sec2},
the interior moment functionals
\begin{equation*}
    \mathcal{V}_k(f)
    = \int_{S_d} f(\B\sigma)\rho_k(\B\sigma)\Omega(\B\sigma)d\B\sigma,
    \quad k=1,\dots,\tilde d,
    \label{eq:V-def}
\end{equation*}
are the volume counterparts of the face moments $\mathcal{L}_j(f)$
defined in~\eqref{Lj}.  
While the optimal choice of the face test functions $q_j^{\star}$
improves the conditioning of the face block $M$, 
the selection of the interior quadratics $\rho_\ell$ determines the conditioning of the \emph{volume block}
\begin{equation*}
    G = \left[\mathcal{V}_k\left(\rho_\ell\right)\right]_{k\ell}\in\mathbb{R}^{\tilde{d}\times\tilde{d}},
    \label{eq:G-def}
\end{equation*}
and therefore the spectral stability of the matrix
\begin{equation*}
    H =
    \begin{pmatrix}
        G & C \\[1mm]
        \tilde C & M
    \end{pmatrix}.
\end{equation*}
Our goal is to construct an \emph{optimal family} $\left\{\rho_\ell^{\star}\right\}_{\ell=1}^{\tilde d}$
that minimizes the condition number of $G$, or equivalently maximizes its 
smallest eigenvalue, under the unisolvence assumptions of 
Theorem~\ref{thm:CNS-nonsym}.\\

Each interior polynomial $\rho_\ell$ must be orthogonal, with respect to the 
weighted inner product $\langle\cdot,\cdot\rangle_\Omega$, to all affine functions, namely
\begin{equation*}
   \left\langle \rho_\ell, 1\right\rangle_\Omega = 0,
    \quad
    \left\langle \rho_\ell, \lambda_j\right\rangle_\Omega = 0,
    \quad j=0,\ldots,d, \quad \ell=1,\dots,\tilde{d}.
    \label{eq:rho-orth}
\end{equation*}
Whenever this orthogonalization is possible, we additionally impose that the spaces $\mathbb{V}$ and $\mathbb{W}$ are mutually orthogonal with respect to $\langle\cdot,\cdot\rangle_\Omega$, that is
\begin{equation*}
    \left\langle \rho_\ell, \psi_j \right\rangle_\Omega = 0,
    \quad j=0,\ldots,d, \quad \ell=1,\dots,\tilde{d}.
\end{equation*}
In this setting, the following theorem establishes the spectral optimality of the 
projected interior generators.

\begin{theorem}
\label{thm:spectral-optimality-rho}
Let $\left\{p_\ell\right\}_{\ell=1}^{\tilde d}\subset \mathbb{P}_2\left(S_d\right)$ be such that 
\begin{equation*}
    \mathbb{V}=\operatorname{span}\left\{\left(I-\Pi_{1,\Omega}\right)\left(p_\ell\right)\,:\, \ell=1,\dots,\tilde d\right\}
     \subset \mathbb{S}_2\left(S_d\right).
\end{equation*}
We define the admissible class by
\begin{equation}
    \mathcal{A}
    =
    \left\{
        \left\{\rho_\ell\right\}_{\ell=1}^{\tilde d}\subset \mathbb{S}_2\left(S_d\right)
        \,:\,
        \rho_\ell\in\operatorname{span}\left\{\left(I-\Pi_{1,\Omega}\right)\left(p_\ell\right)\right\},
        \ \left\|\rho_\ell\right\|_{\Omega}=1
    \right\}.
\end{equation}
Consider the normalized family 
\begin{equation}\label{eq:rho-star}
    \rho_\ell^{\star}
    =
    \frac{\left(I-\Pi_{1,\Omega}\right)\left(p_\ell\right)}
         {\left\|\left(I-\Pi_{1,\Omega}\right)\left(p_\ell\right)\right\|_{\Omega}},
    \quad \ell=1,\ldots,\tilde d,
\end{equation}
and set $G^{\star} = \left[\left\langle \rho_\ell^{\star},\rho_k^{\star}\right\rangle_\Omega\right]_{k\ell}$.
Then
\begin{equation}
    \sigma_{\min}\left(G^{\star}\right)
    =
    \max_{\left\{\rho_\ell\right\}_{\ell=1}^{\tilde d}\in\mathcal{A}}
    \sigma_{\min}(G),
    \label{eq:G-optimal}
\end{equation}
that is, the family~\eqref{eq:rho-star} attains the best-conditioned interior Gram block within~$\mathcal{A}$. Moreover, if the projections $\left\{(I-\Pi_{1,\Omega})(p_\ell)\right\}_{\ell=1}^{\tilde d}$ are pairwise orthogonal in $\langle\cdot,\cdot\rangle_\Omega$, then $G^\star=I_{\tilde d}$.
\end{theorem}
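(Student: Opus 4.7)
The plan is to observe that the admissible class $\mathcal{A}$ is so restrictive that every admissible family differs from $\{\rho_\ell^\star\}$ only by a choice of signs, and that such sign flips preserve the spectrum of the Gram matrix. Once this is recognized, the spectral optimality statement reduces to a short orthogonal-similarity argument.

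First, I would unpack the constraints defining $\mathcal{A}$. For each fixed $\ell$, the condition $\rho_\ell \in \operatorname{span}\{(I-\Pi_{1,\Omega})(p_\ell)\}$ forces $\rho_\ell$ to lie on a one-dimensional line in $\mathbb{S}_2(S_d)$, and the normalization $\|\rho_\ell\|_\Omega = 1$ leaves only two possibilities on this line. Since, by definition~\eqref{eq:rho-star}, $\rho_\ell^\star$ is itself a unit-norm generator of that line, every admissible family must have the form $\rho_\ell = \epsilon_\ell\rho_\ell^\star$ with $\epsilon_\ell\in\{-1,+1\}$ for $\ell=1,\dots,\tilde d$. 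Note that this step also implicitly handles the degenerate case in which the projections $(I-\Pi_{1,\Omega})(p_\ell)$ fail to be linearly independent: in that situation $G^\star$ is singular and both sides of~\eqref{eq:G-optimal} vanish, so the conclusion is trivial.

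Next, I would exploit the bilinearity of $\langle\cdot,\cdot\rangle_\Omega$ to compute the entries of the Gram matrix of a generic admissible family:
\begin{equation*}
[G]_{k\ell} = \langle \rho_\ell,\rho_k\rangle_\Omega = \epsilon_k\epsilon_\ell\,[G^\star]_{k\ell}.
\end{equation*}
Introducing the diagonal sign matrix $D = \operatorname{diag}(\epsilon_1,\dots,\epsilon_{\tilde d})$, this rewrites compactly as $G = D\,G^\star\,D$. Because $D$ is orthogonal ($D = D^\top = D^{-1}$), $G$ is orthogonally similar to $G^\star$, so the two matrices share the same spectrum. In particular, $\sigma_{\min}(G) = \sigma_{\min}(G^\star)$ for every $\{\rho_\ell\}\in\mathcal{A}$. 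This proves~\eqref{eq:G-optimal}, with the maximum attained in particular by $\{\rho_\ell^\star\}$.

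The supplementary claim on orthogonality is then immediate: if the projections $(I-\Pi_{1,\Omega})(p_\ell)$ are pairwise orthogonal in $\langle\cdot,\cdot\rangle_\Omega$, the normalization in~\eqref{eq:rho-star} yields an orthonormal family, whence $G^\star = I_{\tilde d}$ by the very definition of the Gram matrix. I do not anticipate a genuine obstacle in this argument; the only delicate point is to read the scope of the span correctly in the definition of $\mathcal{A}$, after which the orthogonal-similarity step is essentially mechanical.
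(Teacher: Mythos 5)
Your proof is correct and follows essentially the same route as the paper: both arguments recognize that the one-dimensional span constraints plus normalization force every admissible family to be a sign flip $\rho_\ell=\epsilon_\ell\rho_\ell^{\star}$, so that $G=D\,G^{\star}D$ with $D$ a diagonal orthogonal sign matrix, whence all admissible Gram matrices share the spectrum of $G^{\star}$ and the maximum in~\eqref{eq:G-optimal} is attained trivially. The paper merely phrases the same argument in coordinates via an orthonormal basis of $\mathbb{V}$, while your direct, coordinate-free version (including the remark on the degenerate dependent case) is equally valid.
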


\begin{proof}
Let $\left\{\tau_\ell\right\}_{\ell=1}^{\tilde d}$ be an orthonormal basis of $\mathbb{V}$ with respect to 
$\left\langle \cdot,\cdot \right\rangle_{\Omega}$.  
For each $k\in\left\{1,\dots,\tilde d\right\}$, since $
\left(I-\Pi_{1,\Omega}\right)\left(p_k\right)\in\mathbb{V},$ there exist coefficients
$a_{k\ell}\in\mathbb{R}$, $\ell=1,\dots,\tilde{d}$, such that
\begin{equation}\label{sasacc}
   \left(I-\Pi_{1,\Omega}\right)\left(p_k\right) =
\sum_{\ell=1}^{\tilde d} a_{k\ell}\tau_\ell.
\end{equation}
By the spanning assumption,
\begin{equation*}
    \B a_k=\left(a_{k1},\dots,a_{k\tilde d}\right)^{\top}\neq \B 0,
\end{equation*}
and by orthonormality of $\left\{\tau_\ell\right\}_{\ell=1}^{\tilde d}$ we obtain
\begin{equation}\label{sasacc1}
    \left\|\left(I-\Pi_{1,\Omega}\right)\left(p_k\right)\right\|_{\Omega}^2=\left\|\B a_k\right\|_2^2.
\end{equation}
Hence, from \eqref{sasacc} and \eqref{sasacc1}, the normalized functions can be written as
\begin{equation}\label{cscs}
\rho_k^{\star}
=
\frac{\left(I-\Pi_{1,\Omega}\right)\left(p_k\right)}
{\left\|\left(I-\Pi_{1,\Omega}\right)\left(p_k\right)\right\|_{\Omega}}
=
\sum_{\ell=1}^{\tilde d}
\frac{a_{k\ell}}{\left\|\B a_k\right\|_2}\tau_\ell,   \quad k=1,\dots,\tilde{d}. 
\end{equation}
Moreover
\begin{equation}\label{cscs1}
    \rho_k^{\star}\in \operatorname{span}\left\{\left(I-\Pi_{1,\Omega}\right)\left(p_k\right)\right\}, \quad k=1,\dots,\tilde{d},
\end{equation}
hence $\left\{\rho_k^{\star}\right\}_{k=1}^{\tilde d}\in\mathcal{A}$.

Let now $\left\{\rho_k\right\}_{k=1}^{\tilde d}\in\mathcal{A}$.  
Since
\begin{equation}\label{cscs2}
\rho_k\in \operatorname{span}\left\{\left(I-\Pi_{1,\Omega}\right)\left(p_k\right)\right\}\subset \mathbb{V}, \quad k=1,\dots,\tilde{d},    
\end{equation}
we may write
\begin{equation}\label{rhok}
\rho_k=\sum_{\ell=1}^{\tilde d} u_{k\ell}\tau_\ell, 
\end{equation} where \begin{equation*} \B u_k = \left(u_{k1},\dots,u_{k\tilde d}\right)^{\top}. \end{equation*}
Using~\eqref{cscs1} and~\eqref{cscs2}, the normalization 
$\left\|\rho_k\right\|_{\Omega}=1$
implies the existence of $s_k\in\left\{-1,+1\right\}$ such that
\begin{equation}\label{dsdsaac} \B u_k = \left(u_{k1},\dots,u_{k\tilde d}\right)^{\top} = s_k\frac{\B a_k}{\left\|\B a_k\right\|_2}, \quad k=1,\dots,\tilde{d}. \end{equation}

We set \begin{itemize} \item $U\in \mathbb{R}^{\tilde{d}\times\tilde{d}}$ be the matrix whose $k$–th row is $\B u_k^{\top}$; \item $U^{\star} \in \mathbb{R}^{\tilde{d}\times\tilde{d}}$ be the matrix whose $k$–th row is $\left(\B a_k/\left\|\B a_k\right\|_2\right)^\top$; \item $S\in \mathbb{R}^{\tilde{d}\times\tilde{d}} $ be the diagonal matrix with $S_{kk}=s_k$, $k=1,\dots,\tilde{d}$. \end{itemize} 
Then~\eqref{dsdsaac} yields \begin{equation}\label{UUstar} U = S U^{\star}. \end{equation} 
By orthonormality of $\left\{\tau_\ell\right\}_{\ell=1}^{\tilde{d}}$ and~\eqref{rhok},
\begin{equation} \label{langle}\left\langle\rho_\ell,\rho_k\right\rangle_{\Omega}=\B u_\ell^{\top}\B u_k. \end{equation} Combining~\eqref{UUstar} and~\eqref{langle}, we have \begin{equation} G = U U^\top = \left(SU^{\star}\right) \left(SU^{\star}\right)^{\top} = S\left(U^{\star}(U^{\star})^{\top}\right)S = SG^{\star} S. \end{equation} 
 Thus $G$ and $G^{\star}$ are similar, and therefore have the same spectrum.  
In particular, for any $\left\{\rho_k\right\}_{k=1}^{\tilde d}\in\mathcal{A}$, we have
\begin{equation*} \sigma_{\min}\left(G^{\star}\right)=\sigma_{\min}(G), \end{equation*} 
and hence
\begin{equation*}
  \sigma_{\min}\left(G^{\star}\right)=  \max_{\left\{\rho_\ell\right\}_{\ell=1}^{\tilde d}\in\mathcal{A}}
    \sigma_{\min}(G).
\end{equation*}
Finally, if the vectors $\left\{\B a_k\right\}_{k=1}^{\tilde{d}}$ are pairwise orthogonal, then by~\eqref{cscs}, we obtain
\begin{equation*}
    \left\langle\rho_{\ell}^{\star},\rho_{k}^{\star} \right\rangle=\delta_{\ell k}, \quad \ell,k=1,\dots,\tilde d,
\end{equation*}
hence $G^{\star}=I_{\tilde d}$, which is spectrally optimal by Lemma~\ref{lem:loewner}. \end{proof}

\section{Explicit and practical construction}
\label{sec4}

In this section, we provide a closed-form analytic expression of the interior
quadratic polynomials with respect to Dirichlet-type weights.
This concrete construction illustrates the general framework introduced above and,
in particular, includes the case of constant weight $\Omega = 1$ as a special instance.

\subsection{Dirichlet weight}
The Dirichlet weight is defined by
\begin{equation}
\Omega_{\B{\alpha}}(\B{\sigma})
=
C_{\B{\alpha},d}
\prod_{i=0}^d \lambda_i^{\alpha_i-1}(\B{\sigma}),
\quad 
\alpha_i>0,\quad i=0,\dots,d,\quad \B{\sigma}\in S_d, 
\label{eq:dirichlet-weight}
\end{equation}
where 
\begin{equation*}
    C_{\B \alpha,d}=\frac{\Gamma   \left(d+\sum_{j=0}^d \alpha_j\right)}{\left\lvert  S_d\right\rvert d!\prod_{j=0}^d\Gamma\left(\alpha_j\right) }
\end{equation*}
denotes the normalization constant chosen so that
\begin{equation*}
\int_{S_d}\Omega_{\B{\alpha}}(\B{\sigma})d\B{\sigma}=1.    
\end{equation*}

Now we establish the unisolvence result for the quadratic space under Dirichlet weights on the simplex.

\begin{corollary}\label{corDirichlet}
Let $S_d\subset\mathbb{R}^d$ be a nondegenerate $d$-simplex with barycentric coordinates
$\lambda_0,\dots,\lambda_d$ and Dirichlet weight $\Omega_{\B{\alpha}}(\B{\sigma})$. We denote by $\omega_{j,\B{\alpha}}$ the induced Dirichlet weight on the face $F_j$.
Then the set of degrees of freedom
\begin{equation*}
\Sigma=\left\{\mathcal{I}_j,\mathcal{L}_j,\mathcal{V}_k\, :\, j=0,\dots,d, \, k=1,\dots,\tilde{d}\right\}    
\end{equation*}
is unisolvent on $\mathbb{P}_2\left(S_d\right)$ for any choice of Dirichlet parameters $\alpha_i>0$.
\end{corollary}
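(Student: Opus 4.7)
The plan is to apply Theorem~\ref{thm:CNS-nonsym} in combination with the constructive result of Theorem~\ref{newtheam}. Theorem~\ref{newtheam} guarantees that, whenever the affine block $A$ defined in~\eqref{matA} is invertible, one can exhibit explicit choices of $\left\{\rho_k\right\}_{k=1}^{\tilde d}$, $\left\{\psi_j\right\}_{j=0}^d$, and $\left\{q_j\right\}_{j=0}^d$ for which $G=I_{\tilde d}$, $C=0$, and the face matrix $M$ (and hence the reduced Schur complement $T=M$) is invertible. The entire proof therefore reduces to verifying that $\det(A)\neq 0$ for every admissible vector of Dirichlet parameters $\B{\alpha}=\left(\alpha_0,\dots,\alpha_d\right)^{\top}$ with $\alpha_i>0$.

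To this end, I would first use the closure-under-face-restriction property recorded in the Dirichlet bullet of Section~\ref{sec2}: the induced face density $\omega_{j,\B{\alpha}}$ is again of Dirichlet type on $F_j$, with parameters $\left(\alpha_i\right)_{i\neq j}$. Applying Lemma~\ref{lem1} in dimension $d-1$ both to the normalization constant of $\omega_{j,\B{\alpha}}$ and to the integral $\int_{F_j}\mu_{j,i}\prod_{k\neq j}\mu_{j,k}^{\alpha_k-1}d\B{x}$, the resulting ratio of Gamma functions collapses via $\Gamma(S_j+1)=S_j\Gamma(S_j)$, and a direct computation yields the closed form
\begin{equation*}
[A]_{jj}=0, \qquad [A]_{ji}=\frac{\alpha_i}{S_j}\quad\text{for } i\neq j, \qquad S_j:=\sum_{k\neq j}\alpha_k.
\end{equation*}
Setting $D=\mathrm{diag}(S_0,\ldots,S_d)$, which is positive definite since all $\alpha_k>0$, left multiplication produces
\begin{equation*}
D A=\mathbf{1}\B{\alpha}^{\top}-\mathrm{diag}(\B{\alpha}),
\end{equation*}
and the matrix determinant lemma, applied to this rank-one perturbation of $-\mathrm{diag}(\B{\alpha})$, gives
\begin{equation*}
\det(D A)=(-1)^{d+1}\left(\prod_{i=0}^{d}\alpha_i\right)\bigl(1-(d+1)\bigr)=(-1)^{d}\, d\prod_{i=0}^{d}\alpha_i\neq 0.
\end{equation*}
Dividing by $\det(D)=\prod_{j}S_j>0$ yields $\det(A)\neq 0$, independently of the specific values of the Dirichlet parameters.

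With $A$ invertible, Theorem~\ref{newtheam} produces the explicit families $\left\{\rho_k\right\}$, $\left\{\psi_j\right\}$, $\left\{q_j\right\}$ that make the corresponding Schur complement invertible, and Theorem~\ref{thm:CNS-nonsym} then delivers unisolvence of the associated set $\Sigma$ on $\mathbb{P}_2\left(S_d\right)$. The main technical obstacle is the correct Gamma-function bookkeeping in the derivation of the closed form for $[A]_{ji}$: one must carefully balance the Dirichlet normalization of $\omega_{j,\B{\alpha}}$ against the moment integral before the cancellations leading to $\alpha_i/S_j$ become visible. Once this closed form is established, the invertibility of $A$ reduces to a routine application of the matrix determinant lemma, and the Corollary is obtained as a direct consequence of the two general structural results previously proved.
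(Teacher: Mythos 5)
Your proposal is correct and follows essentially the same route as the paper: both reduce the claim to $\det(A)\neq 0$ via Theorem~\ref{thm:CNS-nonsym} and Theorem~\ref{newtheam}, derive the same closed form $[A]_{ji}=\alpha_i/(S-\alpha_j)$ from the Dirichlet face densities and Lemma~\ref{lem1}, and factor out the diagonal scaling before computing the determinant of the zero-diagonal matrix with entries $\alpha_i$. The only (immaterial) difference is that you evaluate that determinant with the matrix determinant lemma for a rank-one update of $-\mathrm{diag}(\B{\alpha})$, whereas the paper performs explicit column and row operations, both yielding $(-1)^d d\prod_{i}\alpha_i$.
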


\begin{proof}
Let $p\in\mathbb P_2\left(S_d\right)$ satisfy
\begin{equation*}
    \mathcal I_j(p)=\mathcal L_j(p)=\mathcal V_k(p)=0,\quad j=0,\dots,d,\ k=1,\dots,\tilde d.
\end{equation*}
By Theorem~\ref{thm:CNS-nonsym}, we must prove that $A$ and the Schur complement
  \begin{equation*}
        T = M - \tilde{C} G^{-1} C\in\mathbb{R}^{(d+1)\times(d+1)}
    \end{equation*}
are both invertible.

We start by proving that $A$ is invertible. 
Since $\Omega_{\B{\alpha}}$ is a Dirichlet weight, its restriction to $F_j$ is again of
Dirichlet type with parameters $\left\{\alpha_i\right\}_{i\neq j}$, normalized to unit
mass. Therefore, using~\eqref{idbc}, we obtain
\begin{equation*}
[A]_{ji}
=
\mathcal{I}_j\left(\lambda_{i}\right)
=
\begin{cases}
0, & i = j,\\[2pt]
\dfrac{\alpha_i}{S-\alpha_j}, & i \neq j,
\end{cases}
\end{equation*}
where
\begin{equation*}
    S=\sum_{i=0}^d \alpha_i.
\end{equation*}
Hence, we can write
\begin{equation*}
A = D B,
\end{equation*}
where 
\begin{equation*}
D = \operatorname{diag}\left(\frac{1}{S-\alpha_0},\dots,\frac{1}{S-\alpha_d}\right),
\end{equation*}
and $B\in\mathbb{R}^{(d+1)\times(d+1)}$ is given by
\begin{equation*}
[B]_{ji}
=
\begin{cases}
0, & i = j,\\[2pt]
\alpha_i, & i\neq j.
\end{cases}
\end{equation*}
Thus $B$ has the form
 \begin{equation*}
 B= \begin{pmatrix}
0 & \alpha_1 & \cdots &\alpha_d\\
\alpha_0 & 0 & \cdots & \alpha_d \\
\vdots  & \vdots  & \ddots & \vdots  \\
\alpha_{0} &\alpha_{1} & \cdots & 0
\end{pmatrix}.
\end{equation*}
Consequently
 \begin{equation*}
  \det(B)= \det \begin{pmatrix}
0 & \alpha_1 & \cdots &\alpha_d\\
\alpha_0 & 0 & \cdots & \alpha_d \\
\vdots  & \vdots  & \ddots & \vdots  \\
\alpha_{0} &\alpha_{1} & \cdots & 0
\end{pmatrix}=\prod_{i=0}^{d} \alpha_i\det \begin{pmatrix}
0 & 1 & \cdots & 1\\
1 & 0 & \cdots & 1 \\
\vdots  & \vdots  & \ddots & \vdots  \\
1 & 1 & \cdots & 0
\end{pmatrix}.
\end{equation*}
In order to evaluate the last determinant, we replace the first column by the sum of all the columns, and we obtain
\begin{equation*}
   \det(B)=d \prod_{i=0}^{d} \alpha_i \det \begin{pmatrix}
1 & 1 & \cdots & 1\\
1 & 0 & \cdots & 1 \\
\vdots  & \vdots  & \ddots & \vdots  \\
1 & 1 & \cdots & 0
\end{pmatrix}.
\end{equation*}
Next, subtracting the first row from each of the remaining rows, we arrive at an upper triangular matrix
\begin{equation*}
   \det(B)=d \prod_{i=0}^{d} \alpha_i \det \begin{pmatrix}
1 & 1 & \cdots & 1\\
0 & -1 & \cdots & 0 \\
\vdots  & \vdots  & \ddots & \vdots  \\
0 & 0 & \cdots & -1
\end{pmatrix}
= (-1)^{d} d \prod_{i=0}^{d} \alpha_i \neq 0.
\end{equation*}
Therefore,
\begin{equation*}
\det(A)
=
\det(D)\det(B)
=
(-1)^d d
\left(\prod_{i=0}^d \frac{\alpha_i}{S-\alpha_i}\right)
\neq 0,
\end{equation*}
which proves that $A$ is nonsingular.

It remains to show that $T$ is invertible. By Theorem~\ref{newtheam}, we may choose
$\mathbb{V}$, $\mathbb{W}$ and the face test functions $q_j$ such that $C=0$ and
$G=I_{\tilde d}$, so that $T = M$. Moreover, the explicit construction in the proof
of Theorem~\ref{newtheam} ensures that each row of $M$ contains exactly one nonzero
entry in a distinct column, and hence $M$ is invertible. Therefore, $T$ is invertible
and the proof is complete.
\end{proof}

\begin{remark}
    In this setting, the $L^2_{\Omega_{\B{\alpha}}}$--orthogonal projection of any quadratic monomial
\begin{equation*}
\lambda_j\lambda_{\ell},\quad 0\le j<\ell\le d,
\end{equation*}
onto $\mathbb{S}_2\left(S_d\right)=\mathbb{P}_2\left(S_d\right)\ominus\mathbb{P}_1\left(S_d\right)$ admits the closed-form representation
\begin{equation*}
\rho_{j\ell}(\B{\sigma})
=
\lambda_j(\B \sigma)\lambda_{\ell}(\B \sigma)
- k_{\ell}\lambda_j(\B \sigma)
- k_{j}\lambda_\ell(\B \sigma)
+ h_{j\ell},
\end{equation*}
where 
\begin{equation*}
k_{\ell}=\frac{\alpha_\ell}{S+2}, 
\quad
k_{j} =\frac{\alpha_j}{S+2}, 
\quad  
h_{j\ell} = \frac{\alpha_j\alpha_\ell}{(S+1)(S+2)},
\quad
S = \sum_{i=0}^d \alpha_i.
\end{equation*}

We index the set of unordered vertex pairs via a bijection
\begin{equation*}
    \nu: \left\{1,\dots,\frac{d(d+1)}{2}\right\}\mapsto \left\{(j,\ell)\, :\, 0\le j<\ell\le d\right\}
\end{equation*}
and we set $\rho_\iota=\rho_{\nu(\iota)}$, $\iota=1,\dots,\frac{d(d+1)}{2}$.\\

From~\eqref{idbc}, it follows that
\begin{equation*}
    \left\langle \rho_{\iota}, 1\right\rangle_{\Omega}=0, 
    \quad 
    \left\langle \rho_{\iota}, \lambda_j\right\rangle_{\Omega}=0,
    \quad j=0,\dots,d,\quad 
    \iota=1,\dots,\frac{d(d+1)}{2},
\end{equation*}
that is, each $\rho_{\iota}$ is orthogonal to $\mathbb{P}_1\left(S_d\right)$ with respect to $\langle\cdot,\cdot\rangle_{\Omega}$.
Since $\dim\left(\mathbb{S}_2\left(S_d\right)\right)=\frac{d(d+1)}{2}$, the set $\left\{\rho_{\iota}\right\}_{\iota=1}^{\frac{d(d+1)}{2}}$ forms a basis of $\mathbb{S}_2\left(S_d\right)$. 
\end{remark}

\begin{remark}
The constant-weight case $\Omega = 1$ is recovered by choosing 
$\alpha_j = 1$ for every $j=0,\dots,d$. In this case, the coefficients reduce to
\begin{equation*}
    k_{\ell} = k_{j} = \frac{1}{d+3},
    \quad
    h_{j\ell} = \frac{1}{(d+2)(d+3)},
\end{equation*}
and the corresponding polynomials take the explicit form
\begin{equation*}
    \rho_{j\ell}(\B{\sigma})
    =
    \lambda_j(\B{\sigma})\lambda_{\ell}(\B{\sigma})
    - \frac{1}{d+3}\left(\lambda_j(\B{\sigma}) + \lambda_{\ell}(\B{\sigma})\right)
    + \frac{1}{(d+2)(d+3)}.
\end{equation*}
For clarity, we report the explicit expressions in the lowest dimensions.
\begin{itemize}
\item For $d=2$ (triangular simplex), we have $S=d+1=3$, hence
\begin{equation*}
    k_{\ell}=k_j=\frac{1}{5},
    \quad
    h_{j\ell} = \frac{1}{20}.
\end{equation*}
Therefore,
\begin{equation*}
\rho_{j\ell}(\B{\sigma})
=
\lambda_j(\B{\sigma})\lambda_\ell(\B{\sigma})
- \frac{1}{5}\left(\lambda_j(\B{\sigma})+\lambda_\ell(\B{\sigma})\right)
+ \frac{1}{20},
\quad 0\le j<\ell\le 2.
\end{equation*}

\item For $d=3$ (tetrahedral simplex), we have $S=d+1=4$, hence
\begin{equation*}
    k_{\ell}=k_j=\frac{1}{6},
    \quad
    h_{j\ell} = \frac{1}{30}.
\end{equation*}
Therefore,
\begin{equation*}
\rho_{j\ell}(\B{\sigma})
=
\lambda_j(\B{\sigma})\lambda_\ell(\B{\sigma})
- \frac{1}{6}\left(\lambda_j(\B{\sigma})+\lambda_\ell(\B{\sigma})\right)
+ \frac{1}{30},
\quad 0\le j<\ell\le 3.
\end{equation*}
\end{itemize}
\end{remark}

\subsection{Affine weight case}

The affine weight is defined by
\begin{equation}\label{eq:affine_weight}
\Omega(\B \sigma) = \sum_{i=0}^{d} \alpha_i\lambda_i(\B \sigma),
\quad \alpha_i > 0, \quad i=0,\dots,d, \quad  \B{\sigma}\in S_d,
\end{equation}
and is normalized so that
\begin{equation*}
\int_{S_d}\Omega(\B{\sigma})d\B{\sigma}=1.    
\end{equation*}
In analogy with the Dirichlet case, we now establish the unisolvence of the
quadratic degrees of freedom under the affine weight.

\begin{corollary}\label{cor:affine_weight}
Let $S_d\subset\mathbb{R}^d$ be a nondegenerate simplex with barycentric coordinates
$\lambda_0,\dots,\lambda_d$, and let $\Omega$ be the affine weight.
Let $\omega_j$ denote the induced normalized face density on $F_j$.
Then the set of degrees of freedom
\begin{equation*}
\Sigma=\left\{\mathcal{I}_j,\mathcal{L}_j,\mathcal{V}_k\ :\ j=0,\dots,d, \, k=1,\dots,\tilde{d}\right\}    
\end{equation*}
is unisolvent on $\mathbb{P}_2\left(S_d\right)$ for any choice of parameters $\alpha_i>0$.
\end{corollary}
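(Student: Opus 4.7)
The strategy mirrors the argument used in Corollary~\ref{corDirichlet}: by Theorem~\ref{thm:CNS-nonsym} it suffices to establish the nonsingularity of $A$ and of the Schur complement $T=M-\tilde C G^{-1}C$. For the latter, one does not need to perform a new ad hoc construction: since $A$ is already handled, one invokes Theorem~\ref{newtheam} to select explicitly the face test polynomials $q_j$, the interior space $\mathbb{V}$ with an orthonormal basis, and the face space $\mathbb{W}$, so that $G=I_{\tilde d}$, $C=0$, and $T=M$, with $M$ invertible by the construction in the proof of that theorem (one nonzero entry per row, placed in distinct columns). Hence the whole argument reduces to proving $\det(A)\neq 0$ for every choice of positive weights $\alpha_0,\dots,\alpha_d$.

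The first concrete step is to compute the entries of $A$ explicitly. Since $\omega_j$ is the normalized face density induced by the affine weight~\eqref{eq:affine_weight}, it is proportional to $\sum_{k\neq j}\alpha_k\mu_{j,k}$ on $F_j$. Using Lemma~\ref{lem1} applied to the $(d-1)$-simplex $F_j$, one obtains the standard face moments
\begin{equation*}
\int_{F_j}\mu_{j,i}d\B x=\frac{|F_j|}{d},\quad
\int_{F_j}\mu_{j,i}^2d\B x=\frac{2|F_j|}{d(d+1)},\quad
\int_{F_j}\mu_{j,i}\mu_{j,k}d\B x=\frac{|F_j|}{d(d+1)}\ (i\neq k).
\end{equation*}
Substituting, and setting $S=\sum_{i=0}^{d}\alpha_i$ and $\tau_j=S-\alpha_j>0$, a direct calculation yields $[A]_{jj}=0$ and
\begin{equation*}
[A]_{ji}=\frac{\alpha_i+\tau_j}{(d+1)\tau_j},\quad i\neq j.
\end{equation*}

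The second and decisive step is to extract from this formula a factorization that exposes the rank structure of $A$. Multiplying row $j$ by $\tau_j$ and collecting the Kronecker contribution gives
\begin{equation*}
(d+1)TA=\B e\,\B\alpha^{\top}+\B\tau\,\B e^{\top}-S\,I_{d+1},
\end{equation*}
where $T=\operatorname{diag}(\tau_0,\dots,\tau_d)$, $\B e$ is the all-ones vector, $\B\tau=(\tau_0,\dots,\tau_d)^{\top}$, and $\B\alpha=(\alpha_0,\dots,\alpha_d)^{\top}$. The right-hand side is a rank-two update of a scalar multiple of the identity, so the matrix determinant lemma applied twice (or the standard formula $\det(X+UV^{\top})=\det(X)\det(I_2+V^{\top}X^{-1}U)$) produces a closed-form expression. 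Using $\B e^{\top}\B\tau=dS$, $\B\alpha^{\top}\B e=S$, and $\B\alpha^{\top}\B\tau=S^2-\sum_i\alpha_i^2$, one obtains, up to a sign,
\begin{equation*}
\det\!\bigl(\B e\B\alpha^{\top}+\B\tau\B e^{\top}-SI\bigr)=(-1)^{d}(d+1)S^{d-1}\Bigl(S^2-\sum_{i=0}^{d}\alpha_i^2\Bigr).
\end{equation*}

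The main obstacle is precisely this determinant computation: unlike the Dirichlet case, where $A$ factors through a permutation-like matrix with a clean hollow structure, here $A$ is obtained only after a diagonal rescaling from a genuine rank-two perturbation, so the algebra is more delicate. Once the closed form is available, the conclusion is immediate: the expansion
\begin{equation*}
S^2-\sum_{i=0}^{d}\alpha_i^2=2\sum_{0\le i<k\le d}\alpha_i\alpha_k>0
\end{equation*}
holds because all $\alpha_i$ are strictly positive. Together with $S>0$ and $\prod_j\tau_j>0$, this yields $\det(A)\neq 0$, completing the verification of hypothesis~(a) of Theorem~\ref{thm:CNS-nonsym} and hence proving unisolvence for every admissible choice of affine weight parameters.
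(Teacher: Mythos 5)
Your proposal is correct, and it handles the Schur complement exactly as the paper does (invoking Theorem~\ref{newtheam} to arrange $G=I_{\tilde d}$, $C=0$, $T=M$ with $M$ invertible), but the key step for $A$ follows a genuinely different route. The paper never computes $\det(A)$: after deriving the same entries $[A]_{jj}=0$, $[A]_{ji}=\frac{S_j+\alpha_i}{(d+1)S_j}$, it runs a null-vector argument, showing that $A\B x=\B 0$ forces $S_jX+Y=\alpha_{\mathrm{tot}}x_j$ with $X=\sum_i x_i$, $Y=\sum_i\alpha_i x_i$, then summing over $j$ and back-substituting to conclude $X=Y=0$ and hence $\B x=\B 0$. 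You instead factor the scaled matrix as a rank-two update, $(d+1)\operatorname{diag}(\tau_0,\dots,\tau_d)A=\B e\B\alpha^{\top}+\B\tau\B e^{\top}-SI_{d+1}$, and apply the matrix determinant lemma; I checked the resulting closed form $(-1)^d(d+1)S^{d-1}\bigl(S^2-\sum_i\alpha_i^2\bigr)$ and it is correct, so $\det(A)=\frac{(-1)^d S^{d-1}\bigl(S^2-\sum_i\alpha_i^2\bigr)}{(d+1)^{d}\prod_j\tau_j}\neq 0$. Both arguments ultimately hinge on the same positivity $S^2-\sum_i\alpha_i^2=2\sum_{i<k}\alpha_i\alpha_k>0$; the paper's kernel argument is more elementary and avoids determinant algebra, while your factorization buys an explicit formula for $\det(A)$ that quantifies how far $A$ is from singularity (potentially useful for the conditioning discussion in Section~\ref{sec5}). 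Two minor points to fix when writing it up: rename your diagonal matrix (you call it $T$, which collides with the paper's Schur complement $T=M-\tilde C G^{-1}C$ used in the very same proof), and note explicitly that the determinant lemma is applicable because $-SI_{d+1}$ is invertible, i.e.\ $S>0$.
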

\begin{proof}
Let $p\in\mathbb P_2\left(S_d\right)$ satisfy
\begin{equation*}
    \mathcal I_j(p)=\mathcal L_j(p)=\mathcal V_k(p)=0,\quad j=0,\dots,d,\ k=1,\dots,\tilde d.
\end{equation*}
By Theorem~\ref{thm:CNS-nonsym}, it suffices to show that both $A$ and
\begin{equation*}
T = M - \tilde{C} G^{-1} C \in \mathbb{R}^{(d+1)\times(d+1)}
\end{equation*}
are invertible. We start by proving that $A$ is invertible. Since $\lambda_j=0$ on $F_j$, we have
\begin{equation}\label{eq:A_entries_affine}
[A]_{jj}=0, \quad 
[A]_{ji}
=
\mathcal{I}_j\left(\lambda_{i}\right)
=
\int_{F_j} \mu_{j,i}(\B x) \omega_j(\B x) d\B x
=
\frac{\displaystyle \sum_{\substack{s=0 \\ s\neq j}}^{d} \alpha_s \int_{F_j} \mu_{j,i}(\B x)\mu_{j,s}(\B x) d\B x}
{\displaystyle \sum_{\substack{s=0 \\ s\neq j}}^{d} \alpha_s \int_{F_j} \mu_{j,s}(\B x) d\B x}.
\end{equation}
Using~\eqref{idbc}, we get
\begin{eqnarray*}
    \int_{F_j} \mu_{j,i}(\B x)d\B x &=& \frac{\left|F_j\right|}{d}, \\
\int_{F_j} \mu_{j,i}^2(\B x)d\B x &=& \frac{2\left|F_j\right|}{d(d+1)},\\
\int_{F_j} \mu_{j,i}(\B x)\mu_{j,s}(\B x)d\B x&=&\frac{\left|F_j\right|}{d(d+1)}, \ i\neq s.
\end{eqnarray*}
Substituting these expressions into~\eqref{eq:A_entries_affine} yields
\begin{equation}\label{eq:Arow}
[A]_{jj}=0, \quad [A]_{ji}= \frac{S_j+\alpha_i}{(d+1)S_j},
\ i\neq j,
\end{equation}
where 
\begin{equation}\label{Sj1}
    S_j = \sum_{\substack{i=0 \\ i\neq j}}^{d} \alpha_i. 
\end{equation}
Let 
$\B x=\left(x_0,\dots,x_d\right)^\top\in \mathbb{R}^{d+1}$ satisfy
\begin{equation*}
    A\B x=\B 0.
\end{equation*}
We set
\begin{equation}\label{notations}
    X = \sum_{i=0}^{d} x_i,
\quad
Y = \sum_{i=0}^{d} \alpha_i x_i,
\quad
\alpha_{\mathrm{tot}}= \sum_{i=0}^{d} \alpha_i.
\end{equation}
From~\eqref{eq:Arow}, for each $j=0,\dots,d$, we obtain
\begin{equation*}
    0 = [A\B x]_j = \sum_{\substack{i=0 \\ i\neq j}}^{d} [A]_{ji} x_{i}= \sum_{\substack{i=0 \\ i\neq j}}^{d}\frac{S_j+\alpha_i}{(d+1)S_j}x_i
= \frac{1}{(d+1)S_j}\left(S_j \sum_{\substack{i=0 \\ i\neq j}}^{d} x_i + \sum_{\substack{i=0 \\ i\neq j}}^{d} \alpha_i x_i \right),
\end{equation*}
which implies
\begin{equation*}
    S_j \sum_{\substack{i=0 \\ i\neq j}}^{d} x_i + \sum_{\substack{i=0 \\ i\neq j}}^{d} \alpha_i x_i=0.
\end{equation*}
Therefore, adding the missing $i=j$ terms and using~\eqref{Sj1}, we obtain
\begin{equation*}
    S_j \sum_{i=0}^{d} x_i + \sum_{\substack{i=0}}^{d} \alpha_i x_i=S_j x_j+\alpha_j x_j= \sum_{\substack{i=0 \\ i\neq j}}^{d} \alpha_i x_j +\alpha_j x_j,
\end{equation*}
that is
\begin{equation}\label{eq:key}
S_j X + Y=
\alpha_{\mathrm{tot}}x_j,
\quad j=0,\dots,d.
\end{equation}
Summing~\eqref{eq:key} over $j=0,\dots,d$, yields
\begin{equation}\label{qwqwq}
    \left( \sum_{j=0}^{d} S_j \right) X + (d+1)Y
=  \alpha_{\mathrm{tot}} X.
\end{equation}
Since 
\begin{equation}\label{Sj}
  S_j=\alpha_{\mathrm{tot}}-\alpha_j,
\end{equation}
we have
\begin{equation}\label{combs}
    \sum_{j=0}^{d} S_j=(d+1) \alpha_{\mathrm{tot}}-\alpha_{\mathrm{tot}}=d\alpha_{\mathrm{tot}}.
\end{equation}
Substituting into~\eqref{qwqwq} gives
\begin{equation*}
    d\alpha_{\mathrm{tot}}X + (d+1)Y=\alpha_{\mathrm{tot}} X,
\end{equation*}
and hence
\begin{equation}\label{eq:yX}
Y=-\frac{d-1}{d+1}\alpha_{\mathrm{tot}}X.
\end{equation}
On the other hand, using~\eqref{notations} and \eqref{eq:key}, we have
\begin{equation*}
Y = \sum_{i=0}^{d} \alpha_i x_i
= \sum_{i=0}^{d} \alpha_i\left( \frac{S_i X + Y}{\alpha_{\mathrm{tot}}} \right)
\end{equation*}
and by~\eqref{Sj}, we get
\begin{equation*}
   Y=\left( \frac{\displaystyle \alpha_{\mathrm{tot}}^2 - \sum_{i=0}^{d}\alpha_i^2}{\alpha_{\mathrm{tot}}} \right) X +Y.
\end{equation*}
Hence
\begin{equation*}
    \left( \frac{\displaystyle \alpha_{\mathrm{tot}}^2 - \sum_{i=0}^{d}\alpha_i^2}{\alpha_{\mathrm{tot}}} \right) X=0.
\end{equation*}
Since $\alpha_i>0$ for all $i$, we have
\begin{equation*}
\alpha_{\mathrm{tot}}^2 - \sum_{i=0}^{d}\alpha_i^2=2\sum_{0\le i<j\le d}\alpha_i\alpha_j > 0,
\end{equation*}
which implies $X=0$. By~\eqref{eq:yX}, it follows that $Y=0$ as well.
Finally,~\eqref{eq:key} gives $x_j=0$ for $j=0,\dots,d$, and therefore
$\B{x}=\B{0}$, i.e., $\ker(A)=\{0\}$.
Hence $A$ is invertible.

It remains to show that $T$ is invertible. By Theorem~\ref{newtheam}, we may choose
$\mathbb{V}$, $\mathbb{W}$ and the face test functions $q_j$ such that $C=0$ and
$G=I_{\tilde d}$, so that $T=M$. Moreover, the same construction ensures that each
row of $M$ contains exactly one nonzero entry in a distinct column, hence $M$ is
invertible. Therefore, $T$ is invertible and the proof is complete.
\end{proof}

\section{Parametric Scaling of Moment Functionals and Optimization Strategy}
\label{sec5}
In this section, we introduce a parametric diagonal scaling of the moment functionals
\begin{equation*}
    \Sigma=\left\{ \mathcal{I}_j, \mathcal{L}_j, \mathcal{V}_k \, :\, j=0,\dots,d, \ k=1,\dots,\tilde{d} \right\},
\end{equation*}
which preserves the analytical framework developed above while allowing for the optimization
of stability and conditioning properties of the resulting discrete systems. We introduce strictly positive scaling parameters
\begin{equation*}
   \theta_j>0,\quad \upsilon_k>0, \quad j=0,\dots,d, \quad k=1,\dots,\tilde{d},
\end{equation*}
and define the parameter vectors
\begin{equation*}
   \B \theta=\left(\theta_0,\dots,\theta_d\right)^{\top}, \quad \B \upsilon=\left(\upsilon_1,\dots,\upsilon_{\tilde{d}}\right)^{\top}.
\end{equation*}
The scaled moment functionals are defined as
\begin{equation}\label{eq:scaled_functionals}
\tilde{\mathcal{L}}_j=\theta_j \mathcal{L}_j,\quad
\tilde{\mathcal{V}}_k=\upsilon_k \mathcal{V}_k, \quad j=0,\dots,d,\quad k=1,\dots,\tilde{d}.
\end{equation}
The parameters
$\B{\theta}$ and $\B{\upsilon}$ act only on the quadratic moment subsystem
associated with $\mathbb{S}_2\left(S_d\right)$.
Accordingly, the matrix $\tilde H$ associated with the scaled quadratic moment functionals
$\tilde{\mathcal{L}}_j,\tilde{\mathcal{V}}_k$, $j=0,\dots,d,$ $k=1,\dots,\tilde{d},$ can be expressed in terms of the standard matrix
$H$ defined in~\eqref{matH} as
\begin{equation*}
\tilde H=
\begin{pmatrix}
D_{\mathcal{V}} & 0\\[2pt]
0 & D_{\mathcal{L}}
\end{pmatrix}
H
=
\begin{pmatrix}
D_{\mathcal{V}} & 0\\[2pt]
0 & D_{\mathcal{L}}
\end{pmatrix}
\begin{pmatrix}
G & C\\[2pt]
\tilde C & M
\end{pmatrix},
\end{equation*}
where
\begin{equation*}
D_{\mathcal{V}}=\mathrm{diag}\left(\upsilon_1,\dots,\upsilon_{\tilde d}\right),\quad
D_{\mathcal{L}}=\mathrm{diag}\left(\theta_0,\dots,\theta_d\right).
\end{equation*}
Since $D_{\mathcal{V}}$ and $D_{\mathcal{L}}$ are invertible diagonal matrices, this scaling
preserves the rank of the quadratic moment matrix and therefore does not affect the
unisolvence properties established in Theorem~\ref{thm:CNS-nonsym}.

%-----------------------------------------------------------------------
\subsection{Parametric Dirichlet densities}
We set
\begin{equation*}
\B{\alpha}=\left(\alpha_0,\dots,\alpha_d\right)^{\top},\quad \alpha_i>0, \quad i=0,\dots,d,
\end{equation*}
and define the parametric Dirichlet density
\begin{equation*}
\Omega_{\B \alpha}(\B \sigma)
=C_{\B \alpha,d}
 \prod_{j=0}^d \lambda_j^{\alpha_j-1}(\B \sigma),
\end{equation*}
where $C_{\B{\alpha},d}$ denotes the normalization constant.
Collecting density and scaling parameters, we define the full parameter vector
\begin{equation*}
\B p=\left(\B \alpha,\B \theta,\B \upsilon\right)^{\top}\in \mathbb{R}_+^{2(d+1)+\tilde d}.    
\end{equation*}

\subsection{Optimization objectives}

The aim of the optimization is to enhance the stability of the scheme by tuning
the density parameters $\B{\alpha}$ and the scaling factors
$\B{\theta}$ and $\B{\upsilon}$.
These parameters directly influence the conditioning of the global system and,
consequently, its numerical performance.
To this end, we introduce the parametric vector $\B p=\left(\B \alpha,\B \theta,\B \upsilon\right)^{\top}\in \mathbb{R}_+^{2(d+1)+\tilde d}$, which allows us
to formulate optimization problems focused on the spectral properties of the
histopolation system.

We recall the identity
\begin{equation*}
\beta(\B p)=\sqrt{\sigma_{\min}\left(\hat S(\B p)\right)},
\end{equation*}
where $\hat S(\B p)$ denotes the normalized Schur operator defined in~\eqref{matThat}.
We therefore consider the maximization problem
\begin{equation}\label{eq:max_infsup}
\max_{\B p\in\mathbb{R}_{+}^{2(d+1)+\tilde{d}}}
\beta(\B p)
=
\max_{\B p\in\mathbb{R}_{+}^{2(d+1)+\tilde{d}}}
\sqrt{\sigma_{\min}\left(\hat S(\B p)\right)}.
\end{equation}
Alternatively, we can minimize the conditioning of the parametrized quadratic moment matrix, that is
\begin{equation}\label{eq:min_conditioning}
\min_{\B p\in\mathbb{R}_{+}^{2(d+1)+\tilde{d}}}
\kappa_2\left(\tilde H(\B p)\right).
\end{equation}
Since the scalings are diagonal, these problems admit a natural interpretation as
\emph{automatic spectral preconditioning} of the quadratic moment subsystem.

\subsection{Optimization procedure}
The parameter vector $\B p$ has moderate dimension and the objective functions
$\beta(\B p)$ and $\kappa_2\left(\tilde H(\B p)\right)$ are continuous but generally nonconvex.  
We therefore rely on the following optimization methods:
\begin{itemize}
    \item the Nelder--Mead simplex algorithm,
    \item bounded coordinate search,
    \item or a projected quasi--Newton method on the positive orthant.
\end{itemize}
In our implementation, the update step is carried out using a projected quasi–Newton method, ensuring positivity of all parameters.
At each iteration, the following steps are carried out:

\begin{enumerate}
    \item Assemble the Dirichlet density $\Omega_{\B \alpha}$ and the induced
          face densities $\omega_{j,\B \alpha}$.
    \item Compute the moment matrices
          \begin{equation*}
              G=G(\B p),\quad 
            C=C(\B p),\quad \tilde C=\tilde C(\B p), \quad M=M(\B p).
          \end{equation*}
    \item Form the scaled quadratic moment matrix
          \begin{equation*}
              \tilde H(\B p)
              =
              \begin{pmatrix}
              D_{\mathcal{V}} & 0\\[2pt]
              0 & D_{\mathcal{L}}
              \end{pmatrix}
              \begin{pmatrix}
              G & C \\
              \tilde C & M
              \end{pmatrix}.
  \end{equation*}
    \item Construct the normalized Schur operator $\hat S(\B p)$.
    \item Evaluate the chosen objective: either $\beta(\B p)$ or 
          $\kappa_2(\tilde H(\B p))$.
    \item Update the parameter vector $\B p$ according to the selected
          optimization scheme.
\end{enumerate}

The overall computational cost is modest, as all operations occur on a
single simplex and involve matrices of fixed size.

%-----------------------------------------------------------------------
\begin{algorithm}[ht]
\caption{Parametric optimization of the moment system}
\begin{algorithmic}[1]
\State Choose an initial guess 
       $\B p^{(0)} = (\B \alpha^{(0)},\B \theta^{(0)},\B \upsilon^{(0)}) > 0$
\For{$n=0,1,2,\dots$ until convergence}
    \State Build $\Omega_{\B \alpha^{(n)}}$ and the face densities $\omega_{j,\B \alpha^{(n)}}$
    \State Assemble $G^{(n)},\, C^{(n)},\, \tilde C^{(n)}, \, M^{(n)}$
    \State Form $H^{(n)}$ using $G^{(n)},\, C^{(n)},\, \tilde C^{(n)}, \, M^{(n)}$
    \State Form $\tilde H^{(n)} = 
           \begin{pmatrix}
                D_{\mathcal{V}}^{(n)} & 0 \\
                0 & D_{\mathcal{L}}^{(n)}
           \end{pmatrix}
           H^{(n)}$
    \State Compute the Schur operator $\hat S^{(n)}$
    \State Evaluate the objective $\Phi^{(n)}$, with
           $\Phi^{(n)}=\beta(\B p^{(n)})$ or $\Phi^{(n)}=\kappa_2(\tilde H^{(n)})$
    \State Update $\B p^{(n+1)}=\texttt{OptimizeStep}(\B p^{(n)},\Phi^{(n)})$
\EndFor
\State \Return optimized parameters $\B p^\star$
\end{algorithmic}
\end{algorithm}

% \begin{remark}
% Restricting the parameters to a bounded subset of the positive parameter space
% yields a compact admissible set on which the objective functions are continuous.
% Hence, there is at least one minimizer, although the problems are generally
% nonconvex and solved by local optimization methods.
% \end{remark}

\subsection{Computational cost}
We finally note that the optimized choice of moment weights does not increase the
computational cost of the method. The assembly of the face and interior blocks
only requires the evaluation of local quadratic moments and the solution of a
$(d+1)\times(d+1)$ linear system per element. Therefore, the overall complexity
remains identical to that of the unoptimized scheme.
\begin{remark}
The diagonal scaling parameters $(\B\theta,\B\upsilon)$ act through invertible left scalings
of the quadratic moment functionals and therefore preserve the rank of the associated
quadratic moment matrix. Moreover, by the unisolvence results established in Corollary~\ref{corDirichlet}, the
full moment system remains unisolvent for all admissible Dirichlet parameters
$\B\alpha\in\mathbb{R}_+^{d+1}$.
\end{remark}

\section{Numerical experiments}\label{sec6}
In this section, we present a series of numerical experiments designed to validate the
theoretical framework developed in the previous sections.
The goals are threefold:
\begin{itemize}
    \item[$(i)$] to assess the approximation properties of the enriched quadratic histopolation operator;  
\item[$(ii)$] to examine its robustness with respect to mesh quality;
\item[$(iii)$] to quantify the influence of the inf-sup stability constant 
   $ \beta = \sqrt{\sigma_{\min}\left(\hat S\right)}$.
\end{itemize} 
All computations were carried out in \textsc{Matlab}, and the linear functionals defining 
the reconstruction operators were evaluated using Gaussian quadrature rules of sufficiently
high order, so that the integration error is negligible with respect to the reconstruction
error.

We consider the following set of test functions
\begin{equation*}
    f_1(x,y,z)=\sin(2\pi x)\sin(2\pi y)\sin(2\pi z), \quad f_2(x,y,z)= \sin(2 \pi xyz),
\end{equation*}
\begin{equation*}
    f_3(x,y,z)=\frac{1}{x^2+y^2+z^2+25}, \quad f_4(x,y,z)=e^{x^2+y^2+z^2},  
\end{equation*}
\begin{equation*}
    f_5(x,y,z)=\sin(x)\cos(y)e^{-z^2}, \quad f_6(x,y,z)= \log\left(x^3y^3z^3+1/4\right),
\end{equation*}
\begin{equation*}
    f_7(x,y,z)=\sqrt{(x-0.5)^2 + (y-0.5)^2 + (z-0.5)^2},
\end{equation*}
\begin{eqnarray*}
f_8(x,y,z) &=&\sin\left(10\sqrt{(x-0.5)^2 + (y-0.5)^2 + (z-0.5)^2} \right) \\ &\times& e^{-\sqrt{(x-0.5)^2 + (y-0.5)^2 + (z-0.5)^2}},
\end{eqnarray*}
\begin{equation*}
    f_9(x,y,z)= \sin(2 \pi xyz)e^{x^2+y^2+z^2},
\end{equation*}
\noindent
all defined on the cubic domain $\mathcal{Q}=[0,1]^3$.
This set includes smooth and non-smooth functions, functions with radial symmetry,
and examples exhibiting significant oscillations; taken together, they provide a
reasonably demanding benchmark for assessing histopolation-based approximation.

We compare the accuracy of the classical linear histopolation method with that of the
enriched quadratic scheme developed in this paper.
The weight is chosen to be a Dirichlet density, which is affine-invariant and admits
closed-form expressions for all face and volume moments.
The comparison is performed on both uniformly refined meshes and quasi-uniform
tetrahedral grids, thereby assessing the behaviour of the method under mesh refinement
as well as its sensitivity to variations in mesh quality.

\subsection{Uniform triangulation}
We first examine a family of uniform tetrahedral meshes obtained by subdividing the
cube into $(n-1)^3$ Cartesian cells, each of which is split into six congruent tetrahedra.
For a given resolution parameter $n\in\mathbb{N}$, we define
\begin{equation*}
\mathcal{T}_{n}=\left\{K_i\,:\, i=1,\dots,6(n-1)^3\right\},     
\end{equation*}
see Fig.~\ref{f1agwn}.  This classical construction yields shape-regular meshes with controlled element quality,
making it a natural baseline for the present comparison.
\begin{figure}
    \centering
\includegraphics[width=0.49\linewidth]{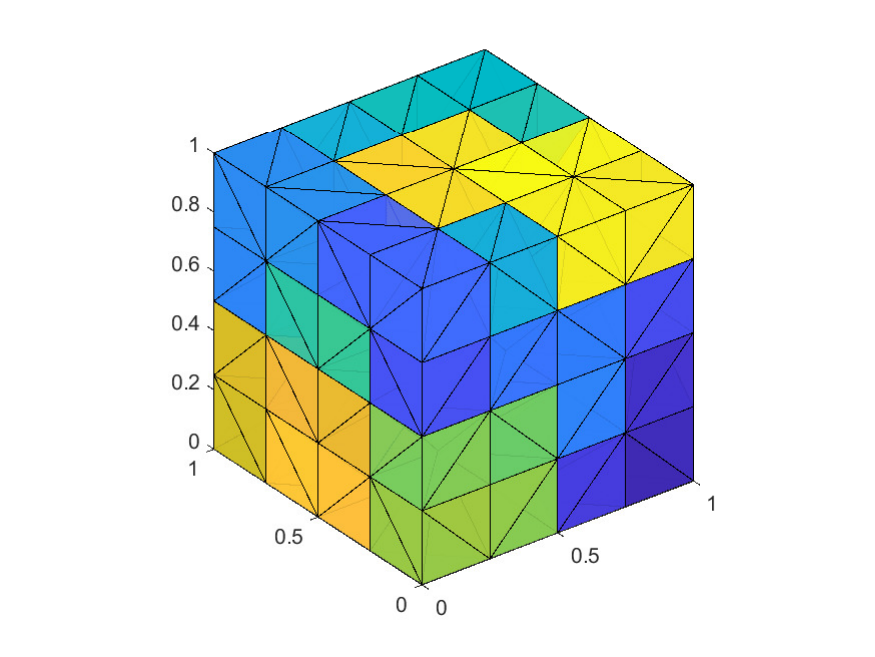}
\includegraphics[width=0.49\linewidth]{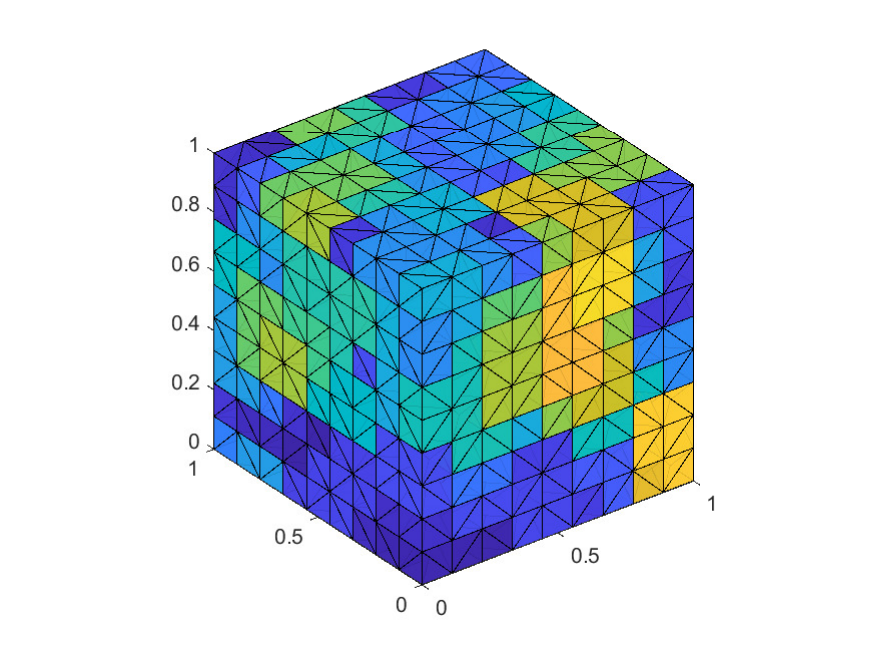}
\includegraphics[width=0.49\linewidth]{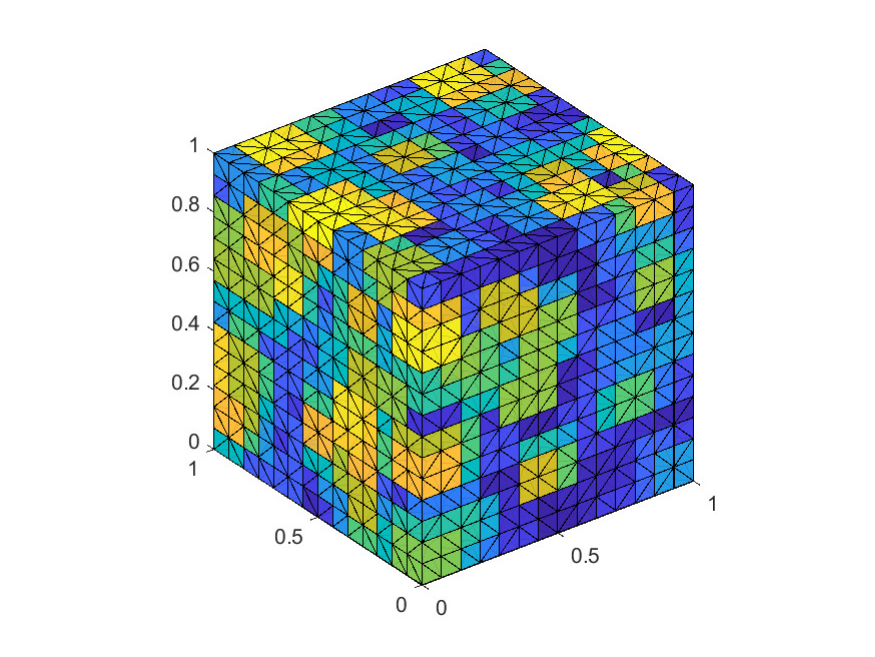}
\includegraphics[width=0.49\linewidth]{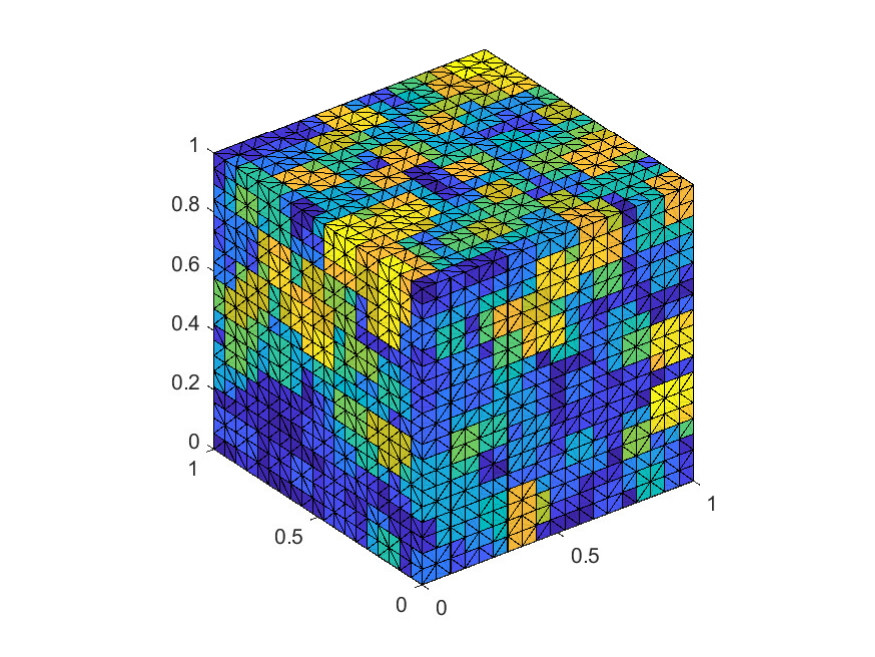}
       \caption{Uniform Delaunay triangulations $\mathcal{T}_n$ for $n=5$, $10$, $15$, and $20$.}
    \label{f1agwn}
\end{figure}
For each test function $f_i$, we compute the $L^2$ error associated with the classical local 
linear histopolation method ($\varepsilon_1^{\mathrm{CH}}$) and that obtained using the
enriched quadratic scheme with Dirichlet densities and optimized parameters
$\B p^{\star}$ ($\varepsilon_{2,\B p^{\star}}^{\mathrm{enr}}$).
The results, shown in Fig.~\ref{plots}, report a clear and consistent trend: the quadratic
scheme produces smaller errors over the entire range of mesh sizes, and its decay rate is
consistently steeper. 

In particular, combining the polynomial reproduction property of the enriched quadratic
histopolation operator with the uniform inf--sup stability of the Dirichlet moment systems
established above, we infer that the resulting elementwise reconstruction defines a family of linear operators
\begin{equation*}
    \tilde{\Pi}_{2,h}: L^2(\mathcal Q)\to \mathbb P_2(\mathcal T_h)\subset L^2(\mathcal Q), \quad h>0.
\end{equation*}
Since $\tilde{\Pi}_{2,h}$ reproduces quadratic polynomials, standard approximation results for
quadratic finite elements (see, e.g.,~\cite[Ch.~3]{Ciarlet2002TFE}) ensure an $L^2$
convergence rate of order $O(h^{3})$ on shape-regular meshes for sufficiently smooth data.
For test functions with limited regularity, a reduced asymptotic rate is expected.
Here the required uniform inf--sup bound follows from Theorem~\ref{thm:affine-robust}, since in the Dirichlet setting the local stability constant $\beta_K$ is
uniformly bounded under affine transport of the weights and test functions.
This behaviour is precisely reflected in the numerical results reported in
Fig.~\ref{plots}.

\begin{figure}
    \centering
\includegraphics[width=0.32\linewidth]{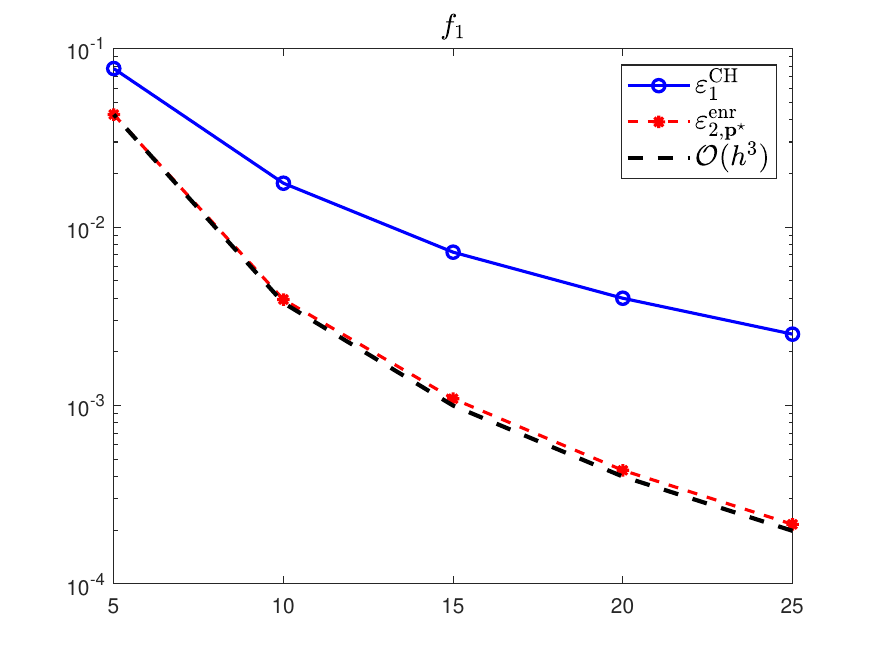}
\includegraphics[width=0.32\linewidth]{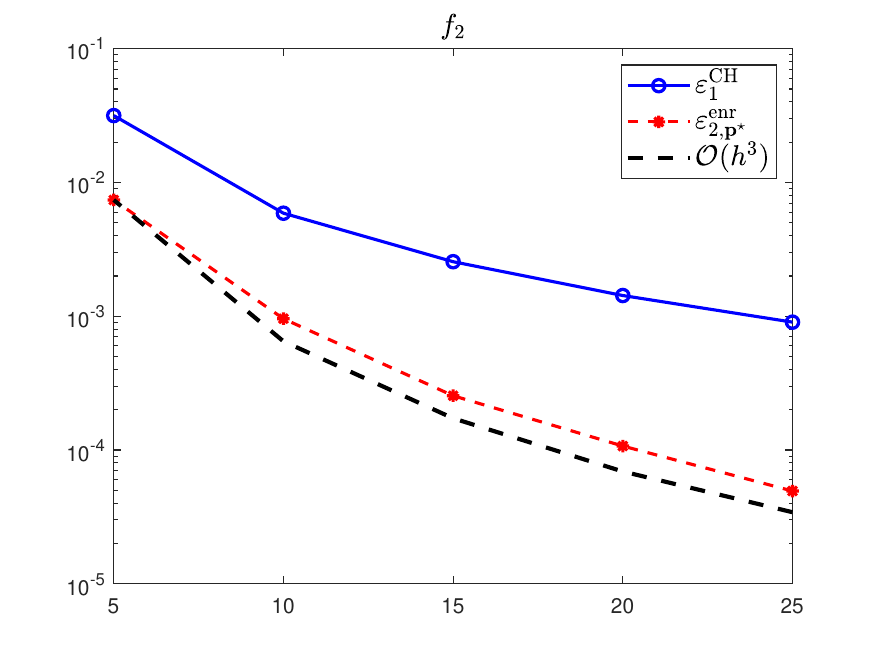}
\includegraphics[width=0.32\linewidth]{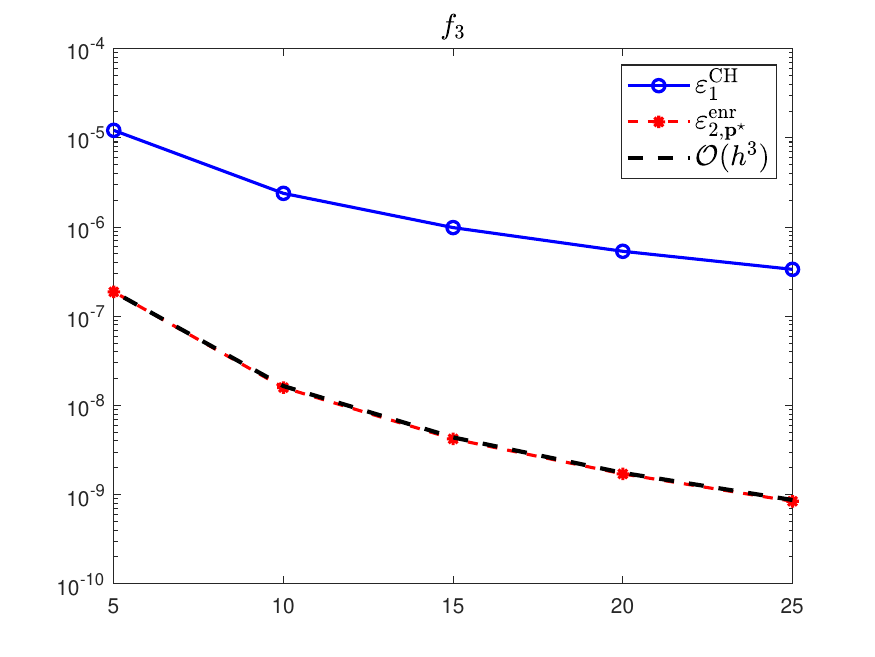}
\includegraphics[width=0.32\linewidth]{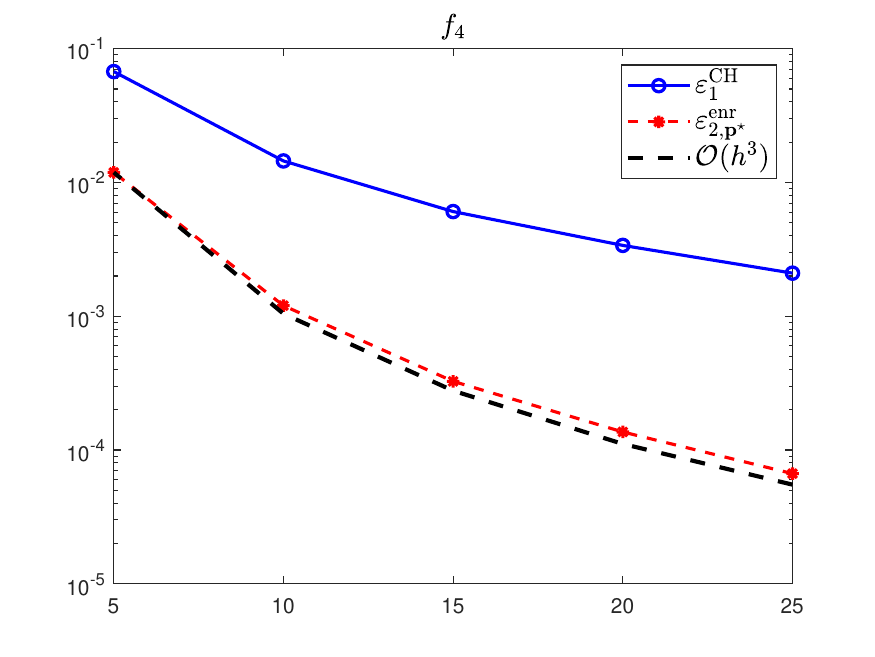}
\includegraphics[width=0.32\linewidth]{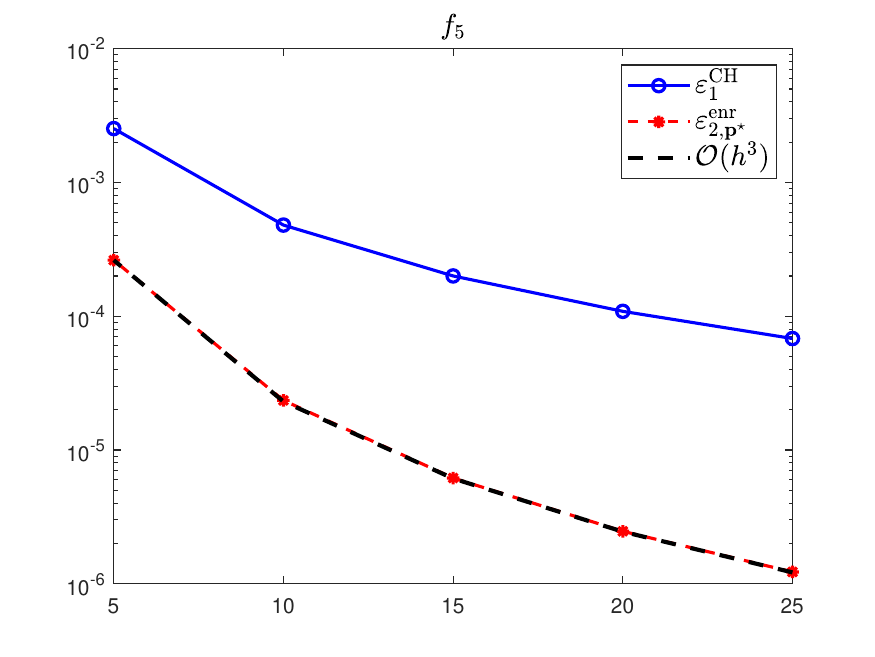}
\includegraphics[width=0.32\linewidth]{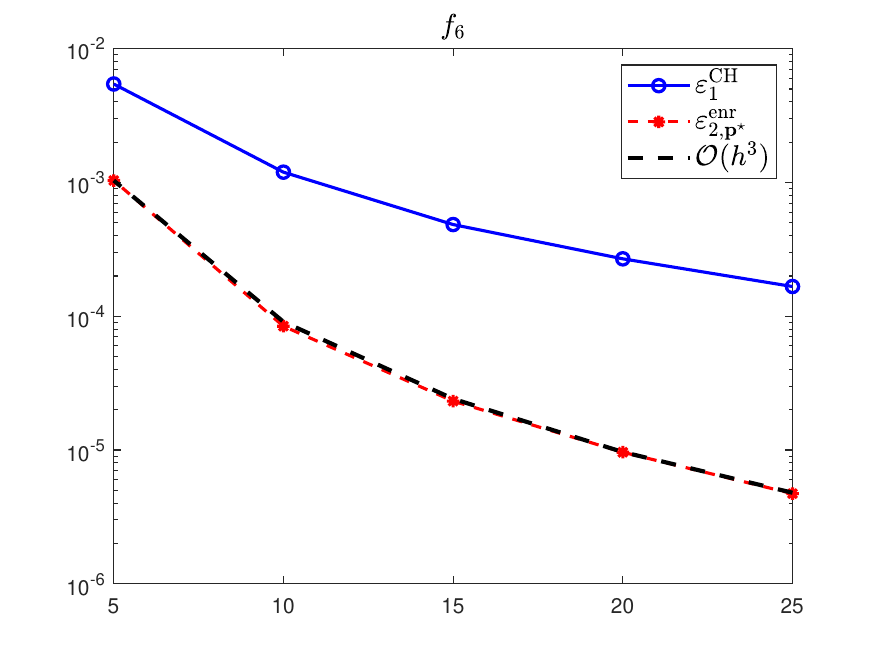}
\includegraphics[width=0.32\linewidth]{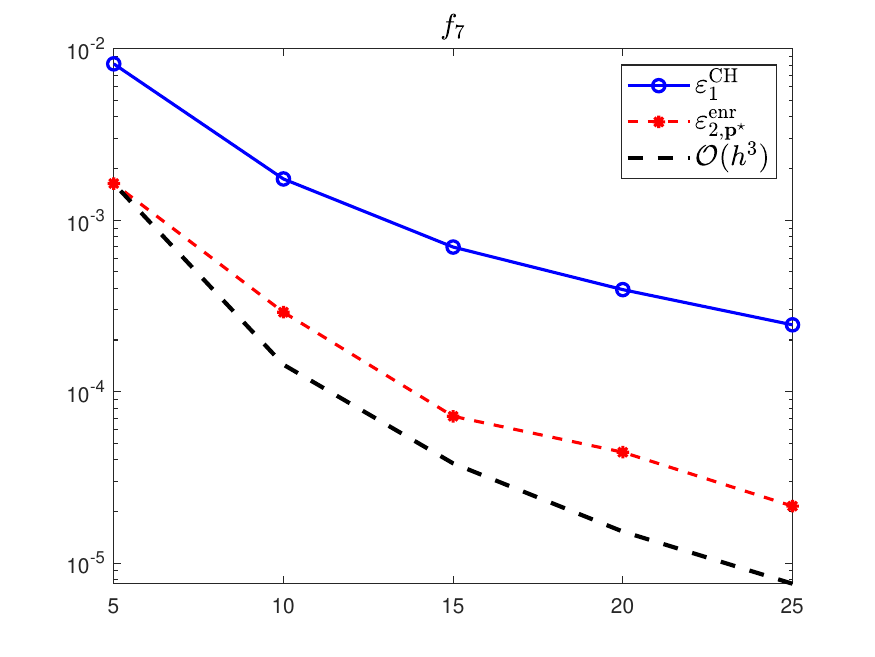}
\includegraphics[width=0.32\linewidth]{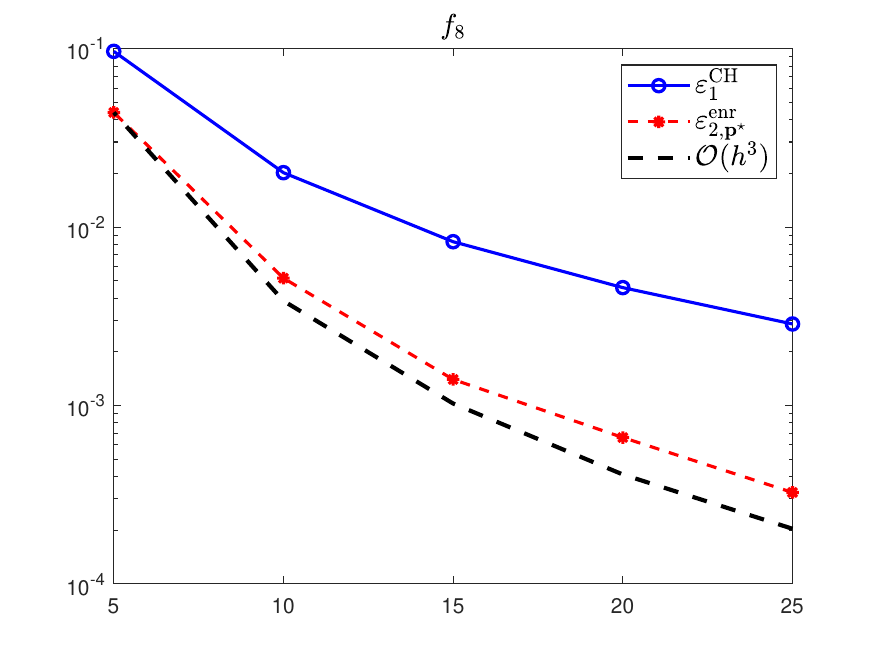}
\includegraphics[width=0.32\linewidth]{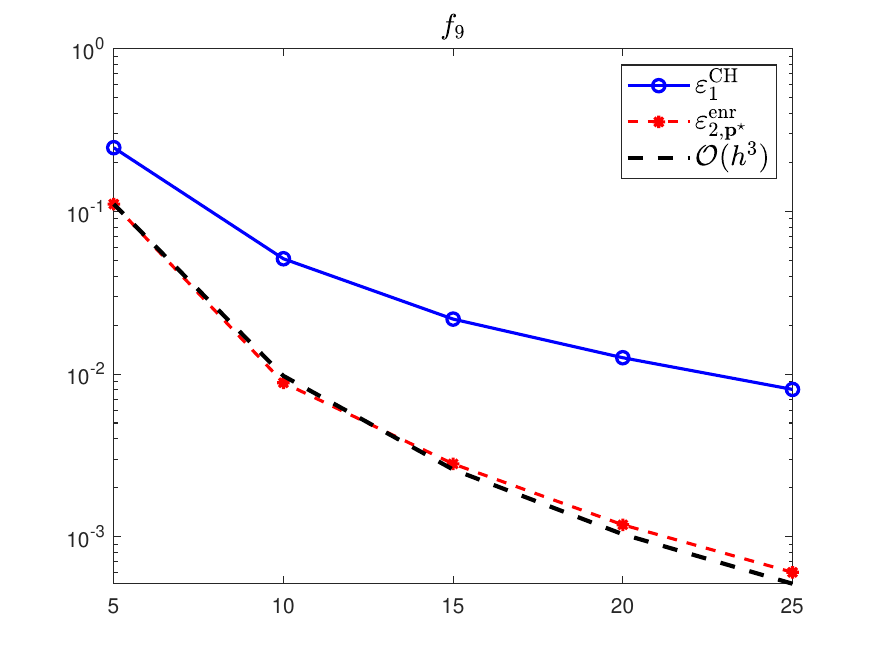}
\caption{Comparison of the $L^2$ errors $\varepsilon_1^{\mathrm{CH}}$ (linear histopolation) and $\varepsilon_{2,\B{p}^{\star}}^{\mathrm{enr}}$ (proposed quadratic scheme) over uniform meshes.}
    \label{plots}
\end{figure}

\subsection{Quasi-uniform triangulation}
We next investigate the behaviour of the method on quasi-uniform meshes.
In this setting, each triangulation $\tilde{\mathcal T}_n$ is obtained by starting from
a regular Cartesian grid on $[0,1]^3$ and then applying a small random perturbation to
the interior grid points, while keeping all boundary vertices fixed.
The perturbed point cloud is subsequently tessellated using the
\texttt{delaunayn} routine in \textsc{Matlab}, producing triangulations that are
unstructured yet remain shape-regular, with mesh size comparable to that of the
underlying Cartesian grid.

Although the grids are unstructured, Theorem~\ref{thm:affine-robust} still applies
elementwise: each tetrahedron is an affine image of the reference simplex, and,
for Dirichlet weights transported by barycentric pullback, the associated local
moment systems satisfy uniform inf--sup bounds.
As a consequence, the local stability constant $\beta_K$ remains uniformly bounded
across elements, which is consistent with the behaviour observed on both uniform
and quasi-uniform meshes.

For each test function $f_i$, $i=1,\dots,9$, we compute the same $L^2$ errors as in
the uniform case, namely the error $\varepsilon_1^{\mathrm{CH}}$ associated with the
classical local linear histopolation operator and the error
$\varepsilon_{2,\B p^{\star}}^{\mathrm{enr}}$ associated with the proposed enriched
quadratic scheme.
The resulting error curves as functions of the effective mesh size $h$ are reported
in Fig.~\ref{plots1}.
The qualitative behaviour is essentially the same as that observed on the structured
meshes: for all test functions, the quadratic reconstruction is uniformly more accurate
than the linear method, and the difference between the slopes of the two error curves
is clearly visible.
As in the uniform case, the error decay of $\varepsilon_{2,\B p^{\star}}^{\mathrm{enr}}$
with mesh refinement is consistently steeper than that of
$\varepsilon_1^{\mathrm{CH}}$, indicating a higher effective convergence rate.
The fact that this behaviour is preserved on quasi-uniform, mildly unstructured meshes
confirms that the method is robust with respect to moderate mesh distortion and loss
of alignment.

\begin{figure}
    \centering
\includegraphics[width=0.32\linewidth]{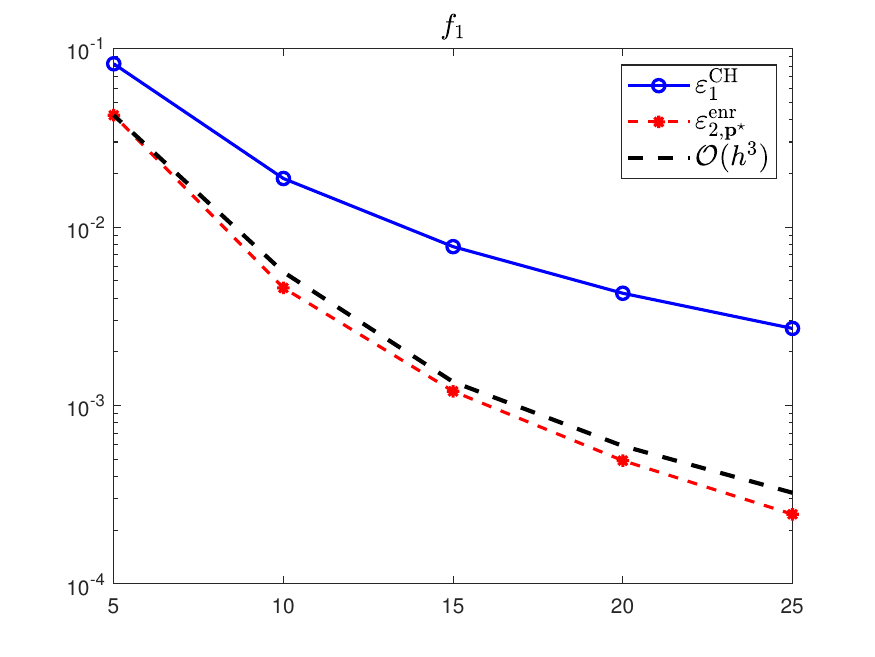}
\includegraphics[width=0.32\linewidth]{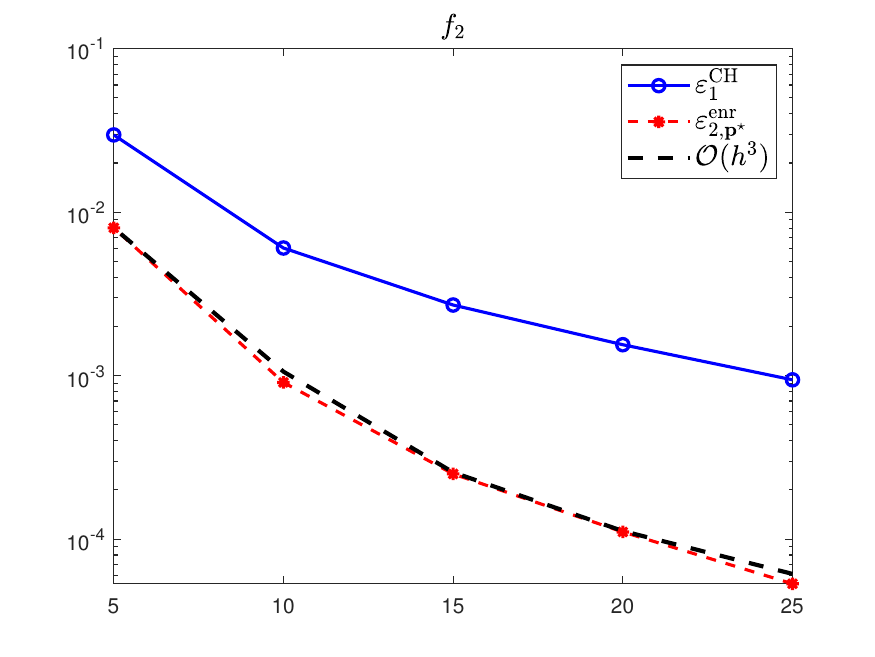}
\includegraphics[width=0.32\linewidth]{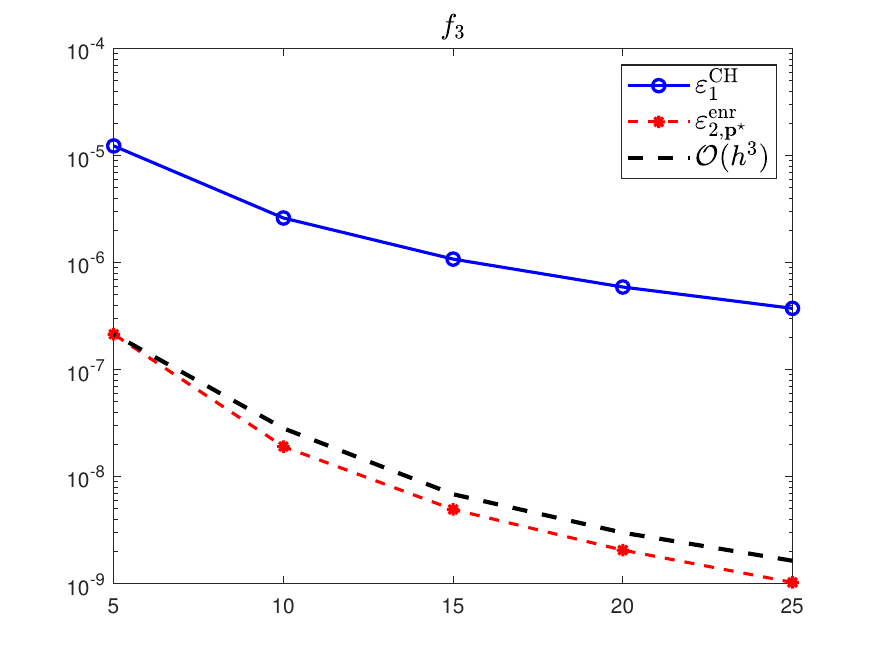}
\includegraphics[width=0.32\linewidth]{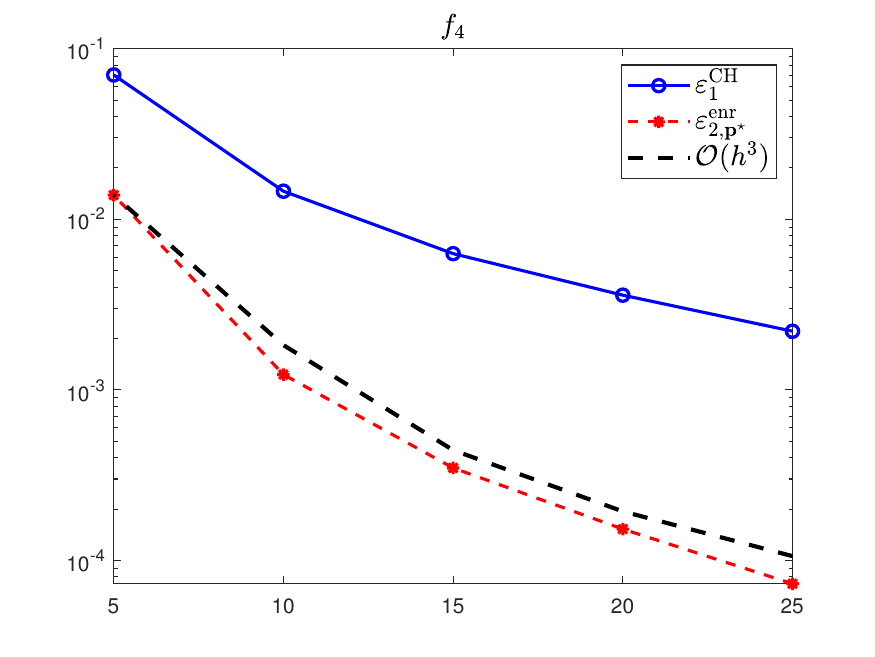}
\includegraphics[width=0.32\linewidth]{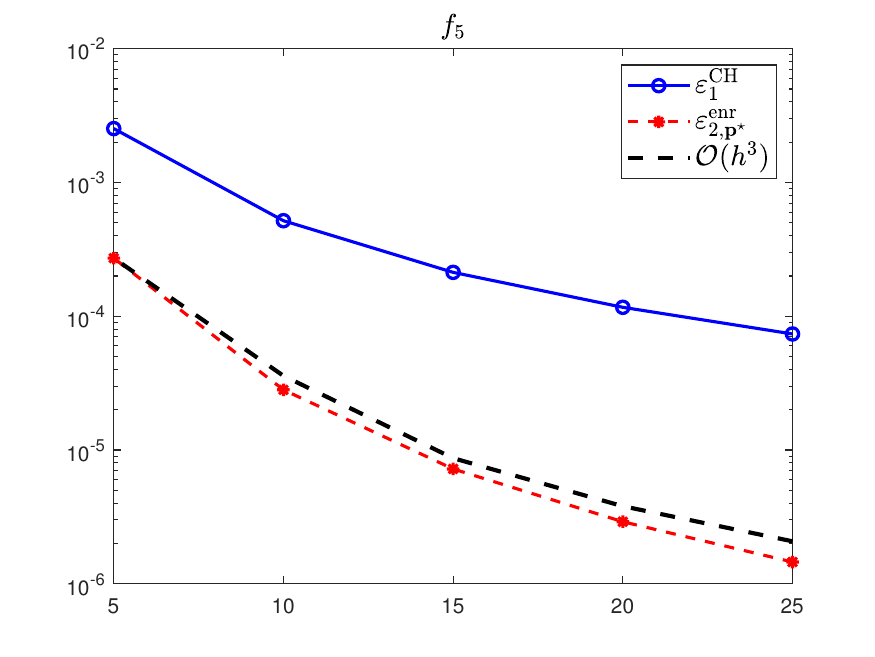}
\includegraphics[width=0.32\linewidth]{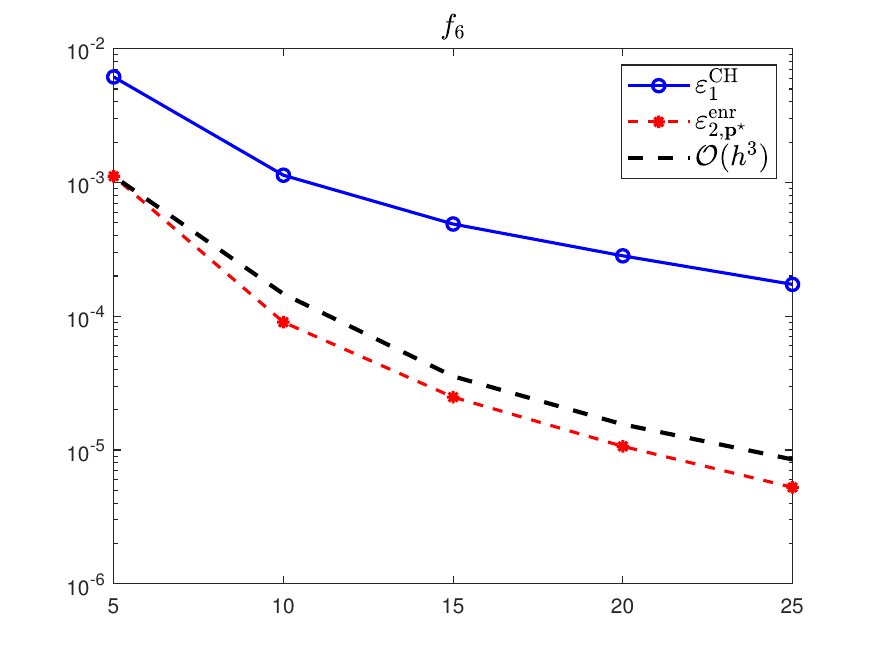}
\includegraphics[width=0.32\linewidth]{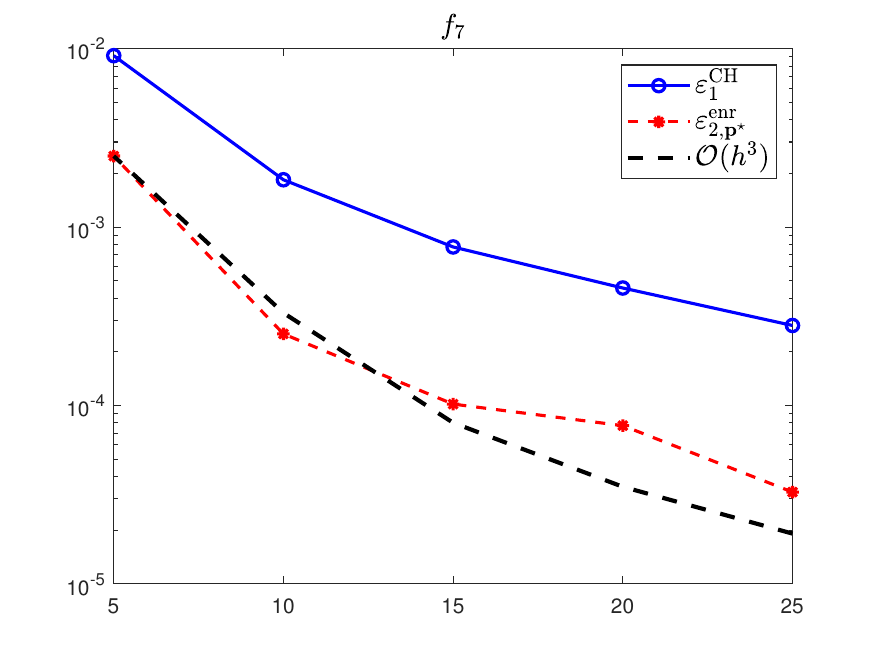}
\includegraphics[width=0.32\linewidth]{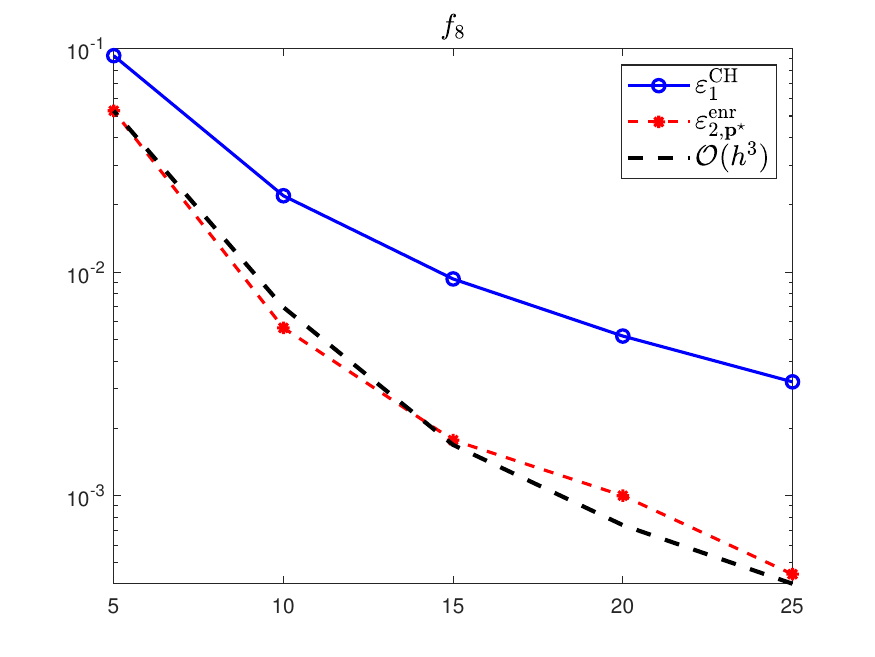}
\includegraphics[width=0.32\linewidth]{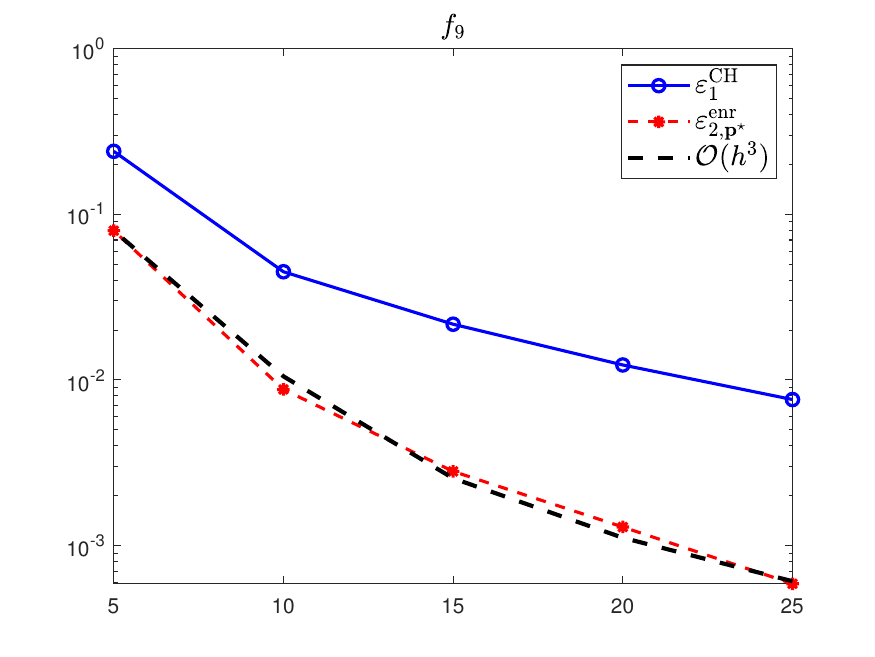}
    \caption{Comparison of the $L^2$ errors $\varepsilon_1^{\mathrm{CH}}$ (linear histopolation) and $\varepsilon_{2,\B{p}^{\star}}^{\mathrm{enr}}$ (proposed quadratic scheme) over quasi-uniform Delaunay meshes generated
    from perturbed Cartesian grids.}
    \label{plots1}
\end{figure}

\subsection{Computation of the inf--sup constant}\label{comp}
We conclude this study by analyzing the behaviour of the local inf--sup stability
parameter
\begin{equation*}
    \beta(\alpha)
    =
    \sqrt{
        \sigma_{\min}\left(
            \hat S(\alpha)
        \right)
    },
\end{equation*}
where, for a given Dirichlet parameter $\alpha$,
\begin{equation*}
    \hat S(\alpha)
    =
    G(\alpha)^{-1/2}
    S(\alpha)
    G(\alpha)^{-1/2}.
\end{equation*}
Throughout this experiment, we consider the symmetric choice
\begin{equation*}
\alpha_1=\alpha_2=\alpha_3=\alpha_4=\alpha.    
\end{equation*}
The stability constant is computed as the square root of the smallest eigenvalue
of $\hat S=\hat S(\alpha)$.
The results of this experiment are displayed in Fig.~\ref{beta}, where the values
of $\beta=\beta(\alpha)$ for $\alpha\in[2,5]$ are reported.

The numerical trend is clear: $\beta(\alpha)$ decreases monotonically as $\alpha$
increases.
This behaviour can be attributed to the progressive concentration of the
Dirichlet densities near the vertices of each face, which leads to smaller inf--sup constants.
At the same time, this dependence on $\alpha$ is fully consistent with the
affine-invariant stability established in Theorem~\ref{thm:affine-robust}, which
ensures that the local inf--sup constant remains independent of the element
geometry.

\begin{figure}[htbp]
    \centering
    \includegraphics[width=0.50\linewidth]{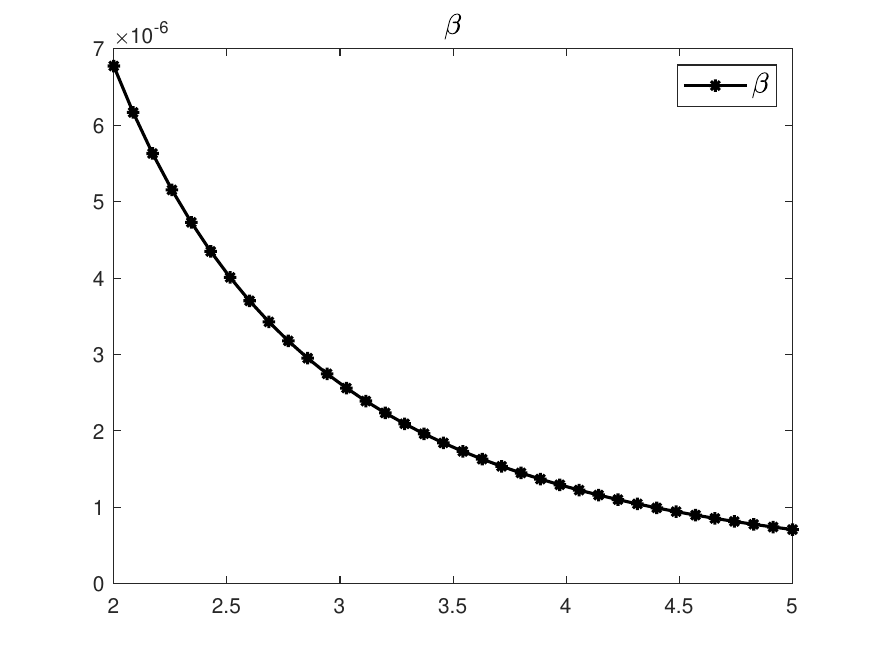}
    \caption{
        Behaviour of the inf--sup stability parameter
        $\beta(\alpha)$
        for symmetric Dirichlet densities on a reference tetrahedron.
        The constant decreases monotonically with~$\alpha$.
    }
    \label{beta}
\end{figure}

The monotonic decay of $\beta(\alpha)$ highlights the sensitivity of the Schur
complement to the choice of the density parameters.
In particular, large values of $\alpha$ reduce the stability of the local
quadratic reconstruction, whereas moderate choices of $\alpha$ yield
significantly stronger inf--sup bounds.
This experiment therefore provides a direct numerical validation of the
theoretical characterization of the inf--sup constant established in
Section~\ref{sec2}.

\begin{remark}
In strongly anisotropic regimes, the operator $S(\alpha)$ may exhibit a markedly
flattened spectrum, which in turn deteriorates the discrete inf--sup constant.
A simple spectral shift,
\[
S_{\mathrm{reg}}(\alpha) = S(\alpha) + \alpha_{\mathrm{reg}} I, \quad \alpha_{\mathrm{reg}} > 0,
\]
provides a mild form of stabilization: it preserves the structure of the moment
operator while preventing excessive compression of the smallest eigenvalues.
For any Dirichlet parameter $\alpha$, the associated regularized stability
constant is defined as
\[
\beta_{\mathrm{reg}}(\alpha)
= \sqrt{\sigma_{\min}\left(G^{-1/2} S_{\mathrm{reg}}(\alpha) G^{-1/2}\right)} .
\]
As illustrated in Fig.~\ref{beta}, even a very small shift $\alpha_{\mathrm{reg}}$
can produce a clearly visible improvement.
This suggests that the modification $S(\alpha)\mapsto S_{\mathrm{reg}}(\alpha)$
offers a practical safeguard in highly anisotropic configurations, without affecting the main structural properties of the moment formulation.
\end{remark}

\section*{Conclusions and Future Work}

In this work, we have developed a unified and fully constructive framework for
weighted quadratic histopolation on simplicial meshes.
The approach relies on a canonical face/interior decomposition of
$\mathbb{P}_2\left(S_d\right)$ combined with a structured Schur-type analysis of
the associated moment matrices.
This reduction yields a dimension-independent characterization of unisolvence and
leads to a sharp inf--sup condition.
A central outcome of the analysis is that the stability of enriched quadratic
histopolation is determined by a low-dimensional reduced operator
$\hat S(\Omega)$, whose smallest eigenvalue provides the inf--sup constant.
This perspective offers a transparent spectral interpretation of enrichment,
conditioning, and robustness, and naturally motivates systematic strategies for
spectral optimization.
Moreover, we show that this reduced stability mechanism is robust under affine
transformations of the elements, ensuring that the inf--sup constant remains
uniformly bounded on shape-regular meshes, independently of local geometric
distortions.

Building on these theoretical results, we have constructed explicit families of
face and interior quadratic polynomials that yield well-conditioned moment
matrices.
We have also introduced a parametric scaling of densities and moment functionals
that preserves the analytical structure of the formulation while enabling
automatic spectral tuning.
Three-dimensional numerical experiments on unstructured meshes confirm the
expected $O\left(h^{3}\right)$ convergence rate and are fully consistent with the theoretical
inf--sup estimates.
They further demonstrate the practical effectiveness of the proposed spectral
optimization strategy in reducing the condition number of the resulting discrete
systems.

Overall, the framework developed here provides a mathematically transparent and computationally effective foundation for higher-order histopolation on general simplicial meshes; the present contribution addresses enriched quadratic constructions, while the extension to higher degrees is deferred. Looking ahead, we plan to: 
\begin{itemize}
    \item[$(i)$] identify optimal or minimax orthogonal decompositions;
    \item[$(ii)$] generalize the spectral optimization strategy to higher-degree polynomial spaces;
    \item[$(iii)$] investigate applications to inverse problems and reconstruction tasks with broader classes of weighted moment data.
\end{itemize}

\section*{Acknowledgments}
This research has been achieved as part of RITA ``Research
 ITalian network on Approximation'' and as part of the UMI group ``Teoria dell'Approssimazione
 e Applicazioni''. The research was supported by GNCS-INdAM 2025 project ``Polinomi, Splines e Funzioni Kernel: dall'Approssimazione Numerica al Software Open-Source''. The work of F. Nudo has been funded by the European Union NextGenerationEU under the Italian National Recovery and Resilience Plan (PNRR), Mission 4, Component 2, Investment 1.2 ``Finanziamento di progetti presentati da giovani ricercatori'',\ pursuant to MUR Decree No. 47/2025. The research was supported by the grant  Bando Professori visitatori 2022 which has allowed the visit of Prof. Allal Guessab to the Department of Mathematics and Computer Science of the University of Calabria in the spring 2022.

\bibliographystyle{spmpsci}
\bibliography{bibliography}

\end{document}